\documentclass[12pt]{amsart}
\usepackage{amscd,amsmath,amssymb,amsfonts,verbatim}
\usepackage[cmtip, all]{xy}

\setlength{\textwidth}{5.8in}             
\setlength{\textheight}{9.2in}
\setlength{\topmargin}{-0.0in}

\setlength{\oddsidemargin}{.25in}
\setlength{\evensidemargin}{.25in}


\newtheorem{thm}{Theorem}[section]
\newtheorem{prop}[thm]{Proposition}
\newtheorem{lem}[thm]{Lemma}
\newtheorem{cor}[thm]{Corollary}

\theoremstyle{definition}

\newtheorem{defn}[thm]{Definition}
\theoremstyle{remark}
\newtheorem{remk}[thm]{Remark}
\newtheorem{remks}[thm]{Remarks}

\newtheorem{exm}[thm]{Example}
\newtheorem{exms}[thm]{Examples}
\newtheorem{notat}[thm]{Notation}
\numberwithin{equation}{section}

{\hfill$\square$\end{defn}}
{\hfill$\square$\end{remk}}
{\hfill$\square$\end{remks}}
{\hfill$\square$\end{exm}}
{\hfill$\square$\end{exms}}
{\hfill$\square$\end{notat}}

\newcommand{\thmref}{Theorem~\ref}
\newcommand{\propref}{Proposition~\ref}
\newcommand{\corref}{Corollary~\ref}

\newcommand{\lemref}{Lemma~\ref}

\newcommand{\sC}{{\mathcal C}}

\newcommand{\sI}{{\mathcal I}}

\newcommand{\sL}{{\mathcal L}}

\newcommand{\sO}{{\mathcal O}}

\newcommand{\sU}{{\mathcal U}}

\newcommand{\sW}{{\mathcal W}}

\newcommand{\sY}{{\mathcal Y}}

\newcommand{\A}{{\mathbb A}}

\newcommand{\G}{{\mathbb G}}

\renewcommand{\P}{{\mathbb P}}

\newcommand{\Z}{{\mathbb Z}}

\newcommand{\CH}{{\rm CH}}

\newcommand{\inj}{\hookrightarrow}

\newcommand{\codim}{{\rm codim}}

\newcommand{\Spec}{{\rm Spec \,}}

\newcommand{\Sch}{{\operatorname{\mathbf{Sch}}}}

\newcommand{\Sm}{{\mathbf{Sm}}}

\newcommand{\ds}{{/\kern-3pt/}}

\newcommand{\Gr}{{\text{\rm Gr}}}

\newcommand{\Supp}{{\operatorname{Supp}}}

\newcommand{\Proj}{{\operatorname{Proj}}}

\newcommand{\un}{\underline}
\newcommand{\ov}{\overline}

\newcommand{\dgn}{{\operatorname{degn}}}
\renewcommand{\dim}{\text{\rm dim}}

\newcommand{\tuborg}{\left\{\begin{array}{ll}}
\newcommand{\sluttuborg}{\end{array}\right.}

\newcommand{\Sec}{{\rm Sec}}

\begin{document}
\title{A moving lemma for cycles with very ample modulus}
\author{Amalendu Krishna, Jinhyun Park}
\address{School of Mathematics, Tata Institute of Fundamental Research,  
1 Homi Bhabha Road, Colaba, Mumbai, India}
\email{amal@math.tifr.res.in}
\address{Department of Mathematical Sciences, KAIST, 291 Daehak-ro Yuseong-gu, 
Daejeon, 305-701, Republic of Korea (South)}
\email{jinhyun@mathsci.kaist.ac.kr; jinhyun@kaist.edu}


\keywords{Cycles with modulus, Moving lemma}

\subjclass[2010]{Primary 14C25; Secondary 13F35, 19E15}
\maketitle

\begin{abstract}
We prove a moving lemma for higher Chow groups with modulus, in the sense of 
Binda-Kerz-Saito, of projective schemes when the modulus is given by a very 
ample divisor. This provides one of the first cases of moving lemmas for 
cycles with modulus, not covered by the additive higher Chow groups. 
We apply this to prove a contravariant functoriality of higher Chow groups 
with modulus. We use our moving techniques to show that the higher Chow
groups of a line bundle over a scheme, with the 0-section as the modulus,
vanishes.
\end{abstract}

\section{Introduction}\label{sec:Intro}

The moving lemma is one of the most important technical tools in 
dealing with algebraic cycles. For usual higher Chow groups, this
was established by S. Bloch (see \cite{Bl-1}, \cite{Bl-2}). 
In order to study the relative $K$-theory of schemes 
(relative to effective divisors) in terms of algebraic cycles, 
the theory of additive higher Chow groups 
(see \cite{BE2}, \cite{KL}, \cite{KP}, \cite{P1}) and 
cycles with modulus (see \cite{BS}, \cite{KS}) were recently introduced. 
But the lack of a moving lemma has been an annoying hindrance in the study of 
these additive higher Chow groups and the Chow groups with modulus.

A moving lemma for additive higher Chow groups of smooth projective schemes was proven in \cite{KP}. A similar moving lemma for the additive higher Chow groups of smooth affine schemes has been very recently established by W. Kai \cite{Kai}, along with some more general results after Nisnevich sheafifications. However, without such modifications, one does not yet know of the existence of a moving lemma for the higher Chow groups with modulus which do not arise from 
additive higher Chow groups. 

\subsection{Main results}
The goal of this paper is to address the moving lemma problem 
for the higher Chow groups
with modulus of projective schemes when the modulus divisor is very ample.
Our main result is the following. The necessary definitions are recalled in 
\S\ref{sec:Recall}.

\begin{thm}\label{thm:Main-Thm}
Let $X$ be an equidimensional reduced projective scheme of dimension 
$d \ge 1$ over a field $k$. 
Let $D \subsetneq X$ be a very ample effective Cartier divisor
such that $X \setminus D$ is smooth over $k$.
Let $\sW $ be a finite collection of locally closed subsets of $X$. 
Then, the inclusion $z^q_{\sW}(X|D, \bullet) \inj z^q(X|D, \bullet)$ is a 
quasi-isomorphism.
\end{thm}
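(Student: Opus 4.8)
\emph{Overview and reductions.} The plan is to reproduce the architecture of Bloch's moving lemma \cite{Bl-1}, the new difficulty being to keep the modulus condition alive along the whole homotopy; very ampleness of $D$ is what supplies a large enough supply of admissible moves. Since $D$ is very ample, fix a closed immersion $X \hookrightarrow \P^N$ with $\sO_X(1) \cong \sO_X(D)$ and $D = X \cap H$ for a hyperplane $H \subseteq \P^N$; then $U := X \setminus D$ is a \emph{closed} subscheme of $\A^N = \P^N \setminus H$, smooth over $k$ by hypothesis. First note that any cycle with modulus $D$ is automatically supported on $U \times \square^n$: if a point of the normalization of the closure of such a cycle lay over $D$ yet over the interior of the cube, the modulus inequality would force it to lie over the face divisor $F_n$ as well. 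Hence $z^q(X|D,\bullet)$ and $z^q_{\sW}(X|D,\bullet)$ are subcomplexes of Bloch's cycle complex $z^q(U,\bullet)$, and the members of $\sW$ may be replaced by $W \cap U$. By the standard homological reduction for moving lemmas it then suffices to construct, for every finite family of admissible cycles, a chain homotopy on the subcomplex they generate whose associated endomorphism carries them into $z^q_{\sW}(X|D,\bullet)$, together with a compatible homotopy on $z^q_{\sW}(X|D,\bullet)$ itself; a colimit over all finite families yields the quasi-isomorphism.

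\emph{Moves from a generic finite projection.} Choose a general linear subspace $\Lambda \subseteq H$ of dimension $N-d-1$; then $\Lambda$ is disjoint from $X$, the linear projection $\rho = \rho_\Lambda \colon X \to \P^d$ away from $\Lambda$ is finite and surjective, and $\rho^{*}H_0 = D$ for the hyperplane $H_0 \subseteq \P^d$ that is the image of $H$. In particular $\rho^{-1}(\A^d) = U$, and $\rho|_U \colon U \to \A^d$ is finite and \emph{flat} (miracle flatness, $U$ being smooth and $\A^d$ regular of the same dimension), so $(\rho|_U)^{*}$ is defined on cycles. The moves are imported from $\A^d$, where the translation group $\G_a^{\,d}$ acts transitively and fixes $H_0$ pointwise, hence preserves the modulus $H_0$: since $\rho^{*}H_0 = D$, pullback along $\rho$ carries cycles with modulus $H_0$ to cycles with modulus $D$, and (using finiteness of $\rho$) $\rho_{*}$ goes the other way, so from a cycle $Z$ with modulus $D$ one forms $\rho^{*}\tau_{v}\rho_{*}Z$ for translations $\tau_v$. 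A finite iteration of this projection trick, with $\Lambda$ and $v$ chosen general relative to the finitely many cycles and to $\sW$, together with a Kleiman--Bertini argument on the smooth variety $U$ where all the relevant intersections take place, moves $Z$ within its higher Chow class to a cycle in $z^q_{\sW}(X|D,\bullet)$.

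\emph{The homotopy, and the main obstacle.} Carrying out this moving process over one extra cube coordinate --- spreading the line of translations $t \mapsto \tau_{tv}$, and the correction cycles of $\rho^{*}\rho_{*}$, into $\P^d \times \square^{n+1}$ and pulling back by $\rho$ --- promotes the rational equivalence to a genuine chain homotopy on $X \times \square^{n+1}$; assembling these over all $n$ and over the colimit of finite families, and repeating relative to $z^q_{\sW}$, completes the proof. The crux, and the step I expect to be hardest, is that the homotopy cycle $\Gamma$ must be \emph{admissible}: face position is routine genericity, but the modulus inequality for the closure $\overline{\Gamma} \subseteq X \times \overline{\square}^{n+1}$ has to be controlled in the presence of the singularities of $X$, which sit precisely on $D$ where the modulus is measured, and of the ramification of $\rho$ along $D$. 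One expects it to hold because every $\tau_{tv}$ fixes $H_0$, hence $D = \rho^{*}H_0$, pointwise, so the inequality is valid fibrewise in $t$ and then passes to the closure by the usual semicontinuity and containment arguments --- but making this rigorous, and verifying that pullback and proper pushforward of cycles with modulus along $\rho$ behave as asserted, is where the real work, and the essential use of very ampleness beyond the bare existence of the embedding, will go.
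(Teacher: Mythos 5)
Your outline reproduces the architecture the paper actually uses: embed $X$ by $|D|$, project away from a general $(N-d-1)$-plane contained in the hyperplane $H_m$ so that $D = \rho^*(H_0)$, move by generic translations on the target, and run the excess-intersection induction with $\rho^*\rho_*-1$ dropping the excess. But the step you explicitly defer --- admissibility, i.e.\ the modulus condition, of the homotopy cycle --- is not a technicality to be ``made rigorous by the usual semicontinuity and containment arguments''; it is the central new difficulty and the paper's main technical content, and the justification you sketch would not work. The modulus condition is an inequality of Cartier divisors on the normalization of the closure of the homotopy cycle in $X\times\ov{\square}^{n+1}$, so it is not a fibrewise-in-$t$ condition, and the containment lemma (Proposition 2.3 of the paper) only transfers a modulus already established for an ambient cycle to its closed subschemes --- it cannot produce the inequality for the homotopy in the first place. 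The danger sits exactly at $t=\infty$: the family of translations $(x,t)\mapsto x+t\eta$ extends only to a rational map $\P^d_K\times\ov{\square}_K\dashrightarrow\P^d_K$ whose indeterminacy locus is $H_\infty\times\{\infty\}$ (and even after blowing this up there remains one bad point $P_\infty$), and the closure of your homotopy cycle meets precisely this locus, so ``passing to the closure'' is where the inequality can fail a priori.

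The paper's resolution (Lemmas 3.4--3.8 and Proposition 3.9) is a specific geometric mechanism you would need to supply: blow up $\P^r_K\times\ov{\square}_K$ along $H_\infty\times\{\infty\}$, pass to the graph/normalization $\ov{\Gamma}^N_1$, and observe that the total transforms of $H_\infty\times\ov{\square}_K$ and of $\P^r_K\times\{\infty\}$ contain the \emph{same} exceptional divisor $E'$; this coincidence lets one trade the pullback of $H_\infty$ for the face divisor at infinity on the strict transform of the homotopy cycle and then descend the inequality to $\ov{V}'^N$ via Lemma 2.5. Nothing in your text identifies or replaces this cancellation, so the proof has a genuine gap at its crux. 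Two smaller gaps in the same vein: your ``$\Lambda$ and $v$ chosen general, colimit over finite families'' framing does not explain how to descend from the generic (function-field) choice back to $k$ --- the paper needs the spreading lemma (Proposition 2.10) applied to the quotient complexes $z^q_{\sW,e}/z^q_{\sW,e-1}$ --- nor how to handle finite base fields, where general $k$-rational choices need not exist and the paper resorts to a pro-$\ell$ norm argument.
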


Our first application of \thmref{thm:Main-Thm} is the following complete 
solution of the moving lemma
for cycles with arbitrary modulus on projective spaces.
The analogous question for cycles on 
affine spaces was solved by W. Kai \cite{Kai}.

\begin{cor}\label{cor:Proj-all}
Let $k$ be any field and $r \ge 1$ any integer.
Let $D \subset \P^r_k$ be any effective 
Cartier divisor. Let $\sW $ be a finite collection of locally closed subsets 
of $\P^r_k$. Then, the inclusion 
$z^q_{\sW}(\P^r_k|D, \bullet) \inj z^q(\P^r_k|D, \bullet)$ is a 
quasi-isomorphism.
\end{cor}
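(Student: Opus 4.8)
The plan is to deduce the corollary from \thmref{thm:Main-Thm} after checking that its hypotheses hold automatically on $\P^r_k$. The starting point is the classification of effective Cartier divisors on $\P^r_k$: since $\P^r_k$ is integral with $\Pic(\P^r_k) \cong \Z$ generated by the hyperplane class, every effective Cartier divisor $D \subset \P^r_k$ is the zero scheme $V(f)$ of a nonzero homogeneous polynomial $f \in k[x_0, \dots, x_r]$ of some degree $m \ge 0$, and $\sO_{\P^r_k}(D) \cong \sO_{\P^r_k}(m)$.

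First I would treat the degenerate case $m = 0$ separately. Then $f$ is a nonzero scalar, so $D = \emptyset$, the modulus condition is vacuous, and $z^q(\P^r_k|D, \bullet)$ is Bloch's cycle complex $z^q(\P^r_k, \bullet)$; the statement is then Bloch's moving lemma for the projective scheme $\P^r_k$ relative to the finite family $\sW$ \cite{Bl-2}.

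For $m \ge 1$ I would simply verify the hypotheses of \thmref{thm:Main-Thm} with $X = \P^r_k$ and $d = r$: the scheme $\P^r_k$ is equidimensional, reduced and projective of dimension $r \ge 1$; the divisor $D \subsetneq \P^r_k$ is very ample because $\sO_{\P^r_k}(D) \cong \sO_{\P^r_k}(m)$ is very ample for every $m \ge 1$; and $\P^r_k \setminus D = \P^r_k \setminus \supp(D)$ is an open subscheme of the smooth $k$-scheme $\P^r_k$, hence smooth over $k$. \thmref{thm:Main-Thm} then gives the claim.

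I do not expect any genuine obstacle here: all the difficulty is already contained in \thmref{thm:Main-Thm}, and the content of the corollary is precisely the observation that the very ampleness hypothesis --- which is the crux of \thmref{thm:Main-Thm} and fails for most divisors on a general projective scheme --- is free on $\P^r_k$, where every $\sO_{\P^r_k}(m)$ with $m \ge 1$ is very ample. In particular one need not assume $D$ reduced, since very ampleness depends only on the class $\sO_{\P^r_k}(D)$ and $\P^r_k \setminus \supp(D)$ is smooth in any case. This is the projective counterpart of W. Kai's solution \cite{Kai} of the moving problem for arbitrary modulus on $\A^r_k$.
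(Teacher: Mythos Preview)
Your proposal is correct and matches the paper's intended argument: the paper does not spell out a proof of this corollary, but the reduction you give (split into $D=0$, handled by Bloch's moving lemma, and $D\neq 0$, where $\sO_{\P^r_k}(D)\cong\sO(m)$ with $m\ge 1$ is very ample so that \thmref{thm:Main-Thm} applies) is exactly the reasoning used in the proof of the parallel \corref{cor:PB-ample-1}.
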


In the second application of \thmref{thm:Main-Thm}, we prove the following
contravariance property of the higher Chow groups with modulus.

\begin{thm}\label{thm:Intro-2}
Let $f: Y \to X$ be a morphism of equidimensional 
reduced quasi-projective schemes over a field $k$
where $X$ is projective over $k$. Let $D \subset X$ be a very ample 
effective Cartier divisor such that $X \setminus D$ is smooth over $k$. 
Suppose that $f^*(D)$ is a Cartier divisor on $Y$
(i.e., no minimal or embedded component of $Y$ maps into $D$). 
Then, there exists a map
\[
f^*: z^q(X|D, \bullet) \to z^q(Y|f^*(D), \bullet)
\]
in the derived category
of abelian groups. In particular, there is a pull-back
$f^*: \CH^q(X|D, p) \to \CH^q(Y|f^*(D), p)$ for every $p,q \ge 0$. 
\end{thm}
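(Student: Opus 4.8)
The plan is to construct the pull-back first at the level of cycle complexes, up to quasi-isomorphism, and only then pass to $\CH$. Factor $f$ through its graph as $Y \xrightarrow{\gamma} Y \times_k X \xrightarrow{p} X$, where $\gamma = (\id_Y, f)$ is a closed immersion (as $X$ is separated over $k$) and $p$ is flat, being a base change of $Y \to \Spec k$. Flat pull-back along $p \times \id$ sends a cycle $Z$ on $X \times \square^n$ to the product cycle $(p \times \id)^{-1}(Z)$ on $Y \times X \times \square^n$, which preserves the face and modulus conditions for the modulus divisor $p^*(D) = Y \times D$ (this is Cartier and satisfies $\gamma^*(p^*(D)) = f^*(D)$); preservation of modulus is a special case of the lemma in the third paragraph below. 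Since every cycle in the image of $p^*$ has product type, its scheme-theoretic intersection with $\gamma(Y) \times \square^n$ is identified, under the canonical isomorphism $\gamma(Y) \times \square^n \cong Y \times \square^n$, with the naive pull-back $(f \times \id)^{-1}(Z)$. So the whole construction reduces to moving $Z$ on $X$ so that $(f \times \id)^{-1}(Z)$ has the right dimension and modulus, and this is where \thmref{thm:Main-Thm} enters.

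To identify the subsets of $X$ to move along, first stratify: by generic flatness, $Y = \bigsqcup_i Y_i$ for finitely many locally closed subsets $Y_i$ with each $f|_{Y_i} : Y_i \to f(Y_i)$ flat of constant relative dimension $e_i$ onto its image, the image $f(Y_i)$ being locally closed in $X$ after a further refinement. Let $\sW$ be the finite collection consisting of the subsets $f(Y_i)$, the divisor $D$, and all their mutual intersections; \thmref{thm:Main-Thm} makes $z^q_{\sW}(X|D,\bullet) \inj z^q(X|D,\bullet)$ a quasi-isomorphism. For $Z \in z^q_{\sW}(X|D, n)$ and every face $F$ of $\square^n$, the restriction of $Z$ to $f(Y_i) \times F$ has codimension $\ge q$, and its flat pull-back along $f|_{Y_i} \times \id$ again has codimension $\ge q$ in $Y_i \times F$ (flat pull-back preserves codimension, and $\dim Y_i = \dim f(Y_i) + e_i$); since $\dim Y_i \le \dim Y$, the resulting pieces have dimension $\le \dim Y + n - q$, and no component lies in $f^*(D) \times \square^n$ because $D \cap f(Y_i) \in \sW$. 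Summing their closures in $Y \times \square^n$ defines a codimension $q$ cycle $f^*Z$ satisfying the face condition; restriction to a face commutes with flat pull-back on each stratum, so $f^* : z^q_{\sW}(X|D,\bullet) \to z^q(Y|f^*(D),\bullet)$ is a map of complexes, provided the modulus condition is checked.

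The modulus condition for $f^*Z$ is the crux and follows, applied on each stratum (to $f|_{Y_i}$ and to the locally closed immersion $f(Y_i) \inj X$), from the following assertion: \emph{if $Z$ has modulus $D$ on $X$ and $g : Y' \to X$ is a morphism for which every component of $(g \times \id)^{-1}(Z)$ has codimension $\ge q$ and $g^*(D)$ is Cartier, then $g^*Z := (g \times \id)^{-1}(Z)$ has modulus $g^*(D)$.} To see this, fix an integral component $V$ of $g^*Z$ with closure $\overline V \subseteq Y' \times (\P^1)^n$, and put $C := \overline{(g \times \id)(\overline V)} \subseteq \overline Z$, an integral closed subscheme which, by admissibility of $V$, is contained neither in $D \times (\P^1)^n$ nor in the boundary divisor $X \times F_\infty$ at infinity. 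The modulus condition for $Z$ amounts to saying that the rational function $t_\infty / t_D$ on $X \times (\P^1)^n$ — the quotient of local equations of $X \times F_\infty$ and of $D \times (\P^1)^n$ — is regular on the normalization $\overline Z^N$; it therefore stays regular on the normalization of a component $C'$ of $\nu^{-1}(C) \subseteq \overline Z^N$ dominating $C$, descends to a regular function on the normalization $C^N$ (since $(C')^N \to C^N$ is finite surjective, the normalization map $\nu$ being finite), and pulls back to a regular function on $\overline V^N$ along the dominant map $\overline V^N \to C^N$. That last regularity is precisely the modulus condition for $V$ with respect to $g^*(D)$; the hypothesis that $f^*(D)$, hence $g^*(D)$, is Cartier is used here so that these divisors restrict to honest effective Cartier divisors on the various normalizations.

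Finally, set $f^* := \bigl(z^q_{\sW}(X|D,\bullet) \inj z^q(X|D,\bullet)\bigr)^{-1}$ composed with the chain map above, a morphism $z^q(X|D,\bullet) \to z^q(Y|f^*(D),\bullet)$ in the derived category of abelian groups. It is independent of the stratification and of the auxiliary choices: any two admissible $\sW$'s have a common refinement on which the two chain maps literally coincide — both being given by the same proper-intersection formula — and the refined inclusion is still a quasi-isomorphism by \thmref{thm:Main-Thm}. Taking homology yields $f^* : \CH^q(X|D, p) \to \CH^q(Y|f^*(D), p)$. The main obstacle is the modulus computation of the third paragraph: the face conditions are transparently compatible with flat pull-back and with restriction, but the modulus condition is not visibly preserved by restriction to a subvariety, so one genuinely needs the normalization argument above; this, together with the appeal to \thmref{thm:Main-Thm} for the moving, is where the hypotheses that $D$ be very ample, $X \setminus D$ be smooth, and $f^*(D)$ be Cartier are used.
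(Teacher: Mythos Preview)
Your argument is correct and follows the same overall strategy as the paper: choose a finite collection $\sW$ of locally closed subsets of $X$ adapted to $f$, observe that the cycle-level pull-back $f^*: z^q_{\sW}(X|D,\bullet) \to z^q(Y|f^*(D),\bullet)$ is defined on the subcomplex, and then invoke \thmref{thm:Main-Thm} to invert the inclusion $z^q_{\sW}(X|D,\bullet) \inj z^q(X|D,\bullet)$ in the derived category.

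The differences are in the execution, not the architecture. The paper takes $\sW$ to consist of the irreducible components of the loci $X_i = \{x \in X : \dim f^{-1}(x) \ge i\}$ and then simply cites \cite[Theorem~7.1]{KP} for the fact that $f^*$ is defined on $z^q_{\sW}(X|D,\bullet)$; you instead stratify $Y$ by generic flatness and take $\sW$ to be the images $f(Y_i)$ (together with $D$ and intersections), and you unpack the modulus verification by hand via the containment lemma and the normalization/descent argument. Both stratifications work for the same reason --- they control fiber dimensions --- and your modulus argument is exactly what underlies the cited result, so the two proofs are interchangeable. Two small remarks: your opening graph factorization $Y \to Y\times X \to X$ is never used once you pass to the stratification, so it can be dropped; and the phrase ``summing their closures'' is safe only because the contributions from lower-dimensional strata $Y_i$ have dimension strictly below $\dim Y + n - q$ and hence vanish in codimension $q$ --- it would be cleaner to say directly that $(f\times\id)^{-1}(Z)$ has pure codimension $q$ and take its associated cycle, which is what the stratum-by-stratum count actually establishes.
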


\begin{cor}\label{cor:Intro-2-0}
Let $r \ge 1$ be an integer and let $f: Y \to \P^r_k$ be a morphism of 
quasi-projective schemes over a field $k$. Let $D \subset \P^r_k$ be an 
effective Cartier divisor such that $f^*(D)$ is a Cartier divisor on $Y$. 
Then, there exists a pull-back $f^*: \CH^q(\P^r_k|D, p) \to \CH^q(Y|f^*(D), p)$ 
for every $p,q \ge 0$. 
\end{cor}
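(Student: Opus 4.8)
The plan is to deduce the corollary directly from Theorem~\ref{thm:Intro-2}, the point being that when $X = \P^r_k$ the two structural hypotheses imposed there on the pair $(X,D)$ hold automatically. Indeed, $\P^r_k$ is smooth over $k$, so every open subscheme of it --- in particular $\P^r_k \setminus D$ --- is smooth over $k$; and since $\Pic(\P^r_k) = \Z \cdot \sO_{\P^r_k}(1)$, an effective Cartier divisor $D \subset \P^r_k$ is the vanishing scheme of its canonical section $s_D \in H^0(\P^r_k, \sO_{\P^r_k}(e))$ for a unique integer $e = \deg D \ge 0$, and $\sO_{\P^r_k}(e)$ is very ample as soon as $e \ge 1$.

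Suppose first that $e \ge 1$. Then $D$ is a very ample effective Cartier divisor on $\P^r_k$ with smooth complement, so the pair $(X,D) = (\P^r_k, D)$ meets every hypothesis of Theorem~\ref{thm:Intro-2}; since $f^*(D)$ is assumed to be a Cartier divisor on $Y$, that theorem provides a morphism $f^* \colon z^q(\P^r_k|D, \bullet) \to z^q(Y|f^*(D), \bullet)$ in the derived category of abelian groups, and hence the desired pull-back $f^* \colon \CH^q(\P^r_k|D, p) \to \CH^q(Y|f^*(D), p)$ for all $p, q \ge 0$. If instead $e = 0$, then $s_D$ is nowhere vanishing, so $D = \emptyset$ and therefore $f^*(D) = \emptyset$ as well; in this case the statement is merely the contravariant functoriality of Bloch's higher Chow groups for a morphism $f \colon Y \to \P^r_k$ into the smooth scheme $\P^r_k$, which is furnished by Bloch's moving lemma \cite{Bl-1, Bl-2}.

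In this way there is essentially nothing further to prove: the corollary is an immediate specialization of Theorem~\ref{thm:Intro-2}, and no genuine obstacle remains --- all of the difficulty has already been absorbed into Theorem~\ref{thm:Main-Thm} and the moving lemma Corollary~\ref{cor:Proj-all} on which Theorem~\ref{thm:Intro-2} is built. The only point calling for separate treatment is the degenerate case $D = \emptyset$, settled above by the classical moving lemma.
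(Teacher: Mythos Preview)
Your argument is correct and matches the paper's own proof essentially verbatim: the paper (in the body, Corollary~\ref{cor:PB-ample-1}) likewise splits into the case $D=0$, handled by the classical moving lemma, and $D\neq 0$, where every nonzero effective divisor on $\P^r_k$ is very ample so that Theorem~\ref{thm:Intro-2} (equivalently Theorem~\ref{thm:PB-ample}) applies. The only minor inaccuracy is your parenthetical remark that Theorem~\ref{thm:Intro-2} is built on Corollary~\ref{cor:Proj-all}; in fact it rests on the general moving lemma Theorem~\ref{thm:Main-Thm}, of which Corollary~\ref{cor:Proj-all} is itself a consequence.
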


As the final application of our moving techniques, we prove the following
vanishing theorem for the higher Chow groups of a line bundle on a scheme
with the modulus given by the 0-section. This provides examples 
where the higher Chow groups of a variety with a modulus in an effective 
Cartier divisor are all zero. As one knows, this is not possible
for the ordinary higher Chow groups. 
This also gives an evidence in support of the
expectation that the higher Chow groups with modulus are the relative motivic
cohomology.

\begin{thm}\label{thm:Intro-3}
Let $X$ be a quasi-projective scheme over a field $k$ and let
$f: \sL \to X$ be a line bundle. Let $\iota:X \inj \sL$ denote the
0-section embedding. 
Then, the cycle complex $z_s(\sL|X, \bullet)$ is acyclic for all $s \in \Z$.
\end{thm}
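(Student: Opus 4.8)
The plan is to produce an explicit contracting homotopy on the complex, built from the fiberwise scaling action of $\A^1$ on the total space $\sL$. The one feature of the modulus condition that the argument uses is this: the $0$-section $X\subset\sL$ is a reduced effective Cartier divisor, locally cut out by the fiber coordinate $w$, so for an integral $Z\in z_s(\sL|X,n)$ with closure $\ov Z\subset\sL\times(\P^1)^n$ the modulus inequality on $\ov Z^N$ reads $\nu^*\bigl(X\times(\P^1)^n\bigr)\le\nu^*\bigl(\sL\times F_n\bigr)$ with $F_n=\sum_{i=1}^n\{y_i=1\}$. Over the locus where every $y_i\ne 1$ the right side vanishes, so $\ov Z$ does not meet $X\times\square^n$ at all; thus $|Z|\subset\sL^\times\times\square^n$, and $Z$ can approach the $0$-section only as some $y_i\to 1$, and then with multiplicity bounded by $F_n$.

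For the homotopy, let $t=y_{n+1}$ be a new cube coordinate and, for integral $Z$, set
\[
H(Z)\ :=\ \overline{\bigl\{\,\bigl((1-t)\cdot v,\ \underline{y},\ t\bigr)\ :\ (v,\underline{y})\in Z,\ t\in\square\setminus\{\infty\}\,\bigr\}}\ \subset\ \sL\times\square^{n+1},
\]
where $(1-t)\cdot v$ is the scaling action $\A^1\times\sL\to\sL$ (legitimate since $1-t$ is a unit for $t\ne 1,\infty$), and extend $H$ linearly. Since the scaling map $Z\times(\A^1\setminus\{1\})\to\sL\times(\A^1\setminus\{1\})$ is generically injective, $H(Z)$ is integral of dimension $\dim Z+1$, so $H$ raises cubical degree by one and preserves $s$; it also carries degenerate cycles to degenerate cycles, so it is compatible with the normalized complex.

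Next I would verify that $H(Z)$ is admissible. Proper intersection with the faces $\{y_i=\epsilon\}$ follows from that of $Z$ together with the compatibility of $H$ with faces used below. For the modulus condition, on $\ov{H(Z)}$ the pullback of the $0$-section equation $w$ factors as $(1-t)\cdot w_Z$, where $w_Z$ is the pullback of the $0$-section equation of $\ov Z$ along the dominant rational map $\ov{H(Z)}\dto\ov Z$; hence, as divisors on $\ov{H(Z)}^N$,
\[
\operatorname{div}\bigl((1-t)w_Z\bigr)\ =\ \operatorname{div}(1-t)\ +\ \operatorname{div}(w_Z)\ \le\ \{t=1\}\ +\ F_n\ =\ F_{n+1},
\]
because $1-t$ vanishes exactly along the new modulus face $\{t=1\}$, to the correct order, and $\operatorname{div}(w_Z)\le F_n$ by the modulus of $Z$. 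The rigorous verification of this inequality is the step I expect to be the main obstacle: one must describe $\ov{H(Z)}^N$ and the two Cartier divisors at the boundary strata lying over $\{t=\infty\}$ and over intersections of several $\{y_i=1\}$, where the factorization $w_{H(Z)}=(1-t)w_Z$ has to be extracted from a blow-up resolving the shear $v\mapsto(1-t)v$, and rule out spurious components of $\operatorname{div}(w_{H(Z)})$ outside $F_{n+1}$.

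It remains to compute faces. At $t=0$ the scalar is $1$, so $\del^0_{n+1}H(Z)=Z$. At $t=\infty$ the scalar is $\infty$, and a point of $\ov{H(Z)}\cap(\sL\times\square^{n+1})$ over $\{t=\infty\}$ would be a limit forcing $w_Z\to 0$ while the other $y_i$ stay in $\square$ — impossible by the first paragraph — so $\del^\infty_{n+1}H(Z)=0$. For $i\le n$, the standard compatibility of closures with restriction to a face gives $\del^\epsilon_i H(Z)=H(\del^\epsilon_i Z)$. Feeding these into the cubical differential (which, up to signs, is $\sum_i(-1)^i(\del^\infty_i-\del^0_i)$) gives $\del H(Z)=H(\del Z)\pm Z$; after a degreewise sign change on $H$ this is the contracting-homotopy identity $\del H+H\del=\operatorname{id}$. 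Hence $z_s(\sL|X,\bullet)$ is contractible, so acyclic, for every $s\in\Z$ (the assertion being vacuous where this complex vanishes).
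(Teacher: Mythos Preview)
Your approach is essentially the paper's: construct a contracting homotopy from the fiberwise scaling action of $\A^1$ on $\sL$, with the two endpoint faces giving the identity and zero via the observation that admissible cycles avoid the zero section over $\square^n$. The paper uses the flat pullback along the multiplication map $H(x,t)=tx$ rather than your image construction (these agree up to reparametrizing $t$), and handles the modulus check---which you correctly flag as the crux---by reducing to the trivial bundle, compactifying to $\P^1\times\P^1\dashrightarrow\P^1$, and blowing up at the two indeterminacy points $(0,\infty),(\infty,0)$ so that the resulting morphism $\ov H:\Gamma\to\P^1$ lets one compare $\pi^*(D\times\P^1)$ and $\pi^*(\P^1\times\{\infty\})$ on the strict transform of $\ov{V'}$ and then descend the inequality to $\ov{V'}^N$; this is exactly the blow-up resolution your sketch anticipates for making the factorization $w_{H(Z)}=(1-t)w_Z$ rigorous.
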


\subsection{Outline of proofs}
We prove \thmref{thm:Main-Thm} by following the classical approach used by Bloch to prove his moving lemma for ordinary higher Chow groups of smooth projective schemes. We first prove the above theorem for projective spaces. The main difficulty here lies in constructing suitable homotopy varieties and to check their modulus condition. We solve this problem by using some blow-up techniques and our homotopy varieties are very different from the one used classically. 

To deal with the case of general projective schemes, we use the method of linear projections. However, we need to make more subtle choices of our linear 
subspaces than in the classical case due to the presence of the modulus.

We show later in this article how this method breaks down if we replace a very ample divisor by just an ample one. We show that the linear projection method can not be used in general to prove the moving lemma for Chow groups with modulus on either smooth affine or smooth projective schemes, if the modulus divisor is not very ample. This suggests that the general case of the moving lemma for Chow groups with modulus on smooth affine or projective schemes may be a very
challenging task.

\section{Recollection of cycles with modulus}\label{sec:Recall}
In this section, we recollect some needful definitions and notations associated with cycles with modulus. Let $k$ be a field and let $\Sch_k$ denote the category of quasi-projective schemes over $k$. Let $\Sm_k$ denote the full subcategory of $\Sch_k$ consisting of smooth schemes. 

\subsection{Notations}\label{sec:Note}
Set $\A^1_k:=\Spec k[t]$, $\P^1_k:=\Proj\, k[Y_0,Y_1]$ and let $y:=Y_0/Y_1$ be the coordinate on $\P^1_k$. We set $\square:= \A^1_k$ and $\ov{\square} := \P^1_k$. We use the coordinate system $(y_1, \cdots , y_{n})$ on $\ov{\square}^n$ with $y_i:=y\circ q_i$, where $q_i:  \ov{\square}^n \to \ov{\square}$ is the projection onto the $i$-th $\ov{\square}$. For $i=1,\ldots, n$, let $F^{\infty}_{n,i}$ be the Cartier divisor on $\ov{\square}^n$ defined by $\{y_i=\infty\}$. Let $F^{\infty}_{n}$ denote the Cartier divisor $\sum_{i=1} ^{n} F^{\infty}_{n,i}$ on $\ov{\square}^n$. A {\em face} of $\ov{\square}^n$ is a closed subscheme defined by a set of equations of the form $\{y_{i_1}=\epsilon_1, \ldots,  y_{i_s}=\epsilon_s| \ \epsilon_j\in\{0,1\}\}$. For $\epsilon=0, 1$,  and $i = 1, \cdots, n$, let $\iota_{n,i,\epsilon}: \ov{\square}^{n-1} \inj \ov{\square}^n$ be the inclusion
\begin{equation}\label{eqn:face}
\iota_{n,i,\epsilon}(y_1,\ldots, y_{n-1})=(y_1,\ldots, y_{i-1}, \epsilon,y_i,\ldots, y_{n-1}).
\end{equation}
A face of $\square^n$ is an intersection of $\square^n$ with a face of $\ov{\square}^n$.

\subsection{Cycles with modulus}\label{sect:CWM}
Let $X \in \Sch_k$. Recall (\cite[\S 2]{KP-1}) that for effective Cartier divisors $D_1$ and $D_2$ on $X$, we say $D_1 \leq D_2$ if $D_1 + D= D_2$ for some effective Cartier divisor $D$ on $X$. A \emph{modulus pair} or a \emph{scheme with an effective divisor} is a pair $(X,D)$, where $X \in \Sch_k$ and $D$ an effective Cartier divisor on $X$. A morphism $f: (Y, E) \to (X, D)$ of modulus pairs is a morphism $f: Y \to X$ in $\Sch_k$ such that $f^*(D)$ is defined as a Cartier divisor on $Y$ and $f^* (D) \leq E$. In particular, $f^{-1} (D) \subset E$. If $f: Y \to X$ is a morphism of $k$-schemes, and $(X, D)$ is a modulus pair such that $f^{-1} (D ) = \emptyset$, then $f: (Y , \emptyset) \to (X, D)$ is a morphism of modulus pairs.

\begin{defn}[{\cite{BS}, \cite{KS}}]\label{defn:modulus}
Let $(X,D)$ and $(\ov{Y},E)$ be two modulus pairs. Let $Y = \ov{Y} \setminus E$. Let $V \subset X \times Y$ be an integral closed subscheme with closure $\ov{V} \subset X \times \ov{Y}$. We say $V$ \emph{has modulus $D$ on $X \times Y$} (relative to $E$) if $\nu^*_V(D \times \ov{Y}) \le \nu^*_V(X \times E)$ on $\ov{V}^N$, where $\nu_V: \ov{V}^N \to \ov{V} \inj X \times \ov{Y}$ is the normalization followed by the closed immersion.
\end{defn}

\begin{defn}[{\cite{BS}, \cite{KS}}]\label{defn:partial complex}Let $(X,D)$ be a modulus pair. For $s \in \mathbb{Z}$ and $n \geq 0$, let $\un{z}_s (X|D, n)$ be the free abelian group on integral closed subschemes $V \subset X \times \square^n$ of dimension $s+n$ satisfying the following conditions:

\begin{enumerate}
\item (Face condition) for each face $F \subset \square^n$, $V$ intersects $X \times F$ properly. 
\item (Modulus condition) $V$ has modulus $D$ relative to $F_n^{\infty}$ on $X \times \square^n$. 
\end{enumerate}
\end{defn}

We usually drop the phrase ``relative to $F_n^{\infty}$'' for simplicity. A cycle in $\un{z}_s (X|D, n)$ is called an \emph{admissible cycle with modulus $D$}. 
The following containment lemma is from \cite[Proposition 2.4]{KP-1} (see also \cite[Lemma~2.1]{BS} and \cite[Proposition 2.4]{KP}).

\begin{prop}\label{prop:CL*}
Let $(X,D)$ and $(\ov{Y}, E)$ be modulus pairs and $Y= \ov{Y} \setminus E$. If $V \subset X \times Y$ is a closed subscheme with modulus $D$ relative to $E$, then any closed subscheme $W \subset V$ also has modulus $D$ relative to $E$.
\end{prop}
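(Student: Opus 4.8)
The plan is to reduce to integral cycles and then transport the modulus inequality from $\ov{V}^N$ down to $\ov{W}^N$ through a common normal model. Since a closed subscheme has modulus $D$ if and only if each of its irreducible components does, and every component of $W_{\red}$ is contained in some component of $V_{\red}$, we may assume that $V$ and $W$ are integral with $W \subseteq V$; write $\ov{W} \subseteq \ov{V}$ for the closures in $X \times \ov{Y}$. Observe that $\ov{W} \not\subseteq X \times E$, since the nonempty $W \subseteq X \times Y$ is disjoint from the divisor $X \times E$.

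The obstruction to arguing directly is that $\nu_W : \ov{W}^N \to \ov{W} \inj X \times \ov{Y}$ does not factor through $\nu_V : \ov{V}^N \to \ov{V}$, because $\ov{W}$ is not dense in $\ov{V}$. I would get around this by choosing an irreducible component $\ov{W}'$, with its reduced structure, of the preimage $\nu_V^{-1}(\ov{W}) \subseteq \ov{V}^N$ that dominates $\ov{W}$; since $\nu_V$ is finite and surjective, $\ov{W}' \to \ov{W}$ is then finite and surjective. Let $p : (\ov{W}')^N \to \ov{W}'$ be the normalization and $\iota : \ov{W}' \inj \ov{V}^N$ the inclusion. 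As $(\ov{W}')^N \to \ov{W}$ is dominant with normal source, the universal property of normalization factors it as $(\ov{W}')^N \xrightarrow{\,g\,} \ov{W}^N \xrightarrow{\,\nu_W\,} \ov{W}$, and a short dimension count for finite morphisms of varieties (using that $\nu_W$ is finite and separated) shows that $g$ is finite and surjective. By construction the two morphisms $\nu_W \circ g$ and $\nu_V \circ \iota \circ p$ from $(\ov{W}')^N$ to $X \times \ov{Y}$ coincide.

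Now the hypothesis that $V$ has modulus $D$ means that $C := \nu_V^*(X \times E) - \nu_V^*(D \times \ov{Y})$ is an effective Cartier divisor on $\ov{V}^N$, and we must show that $F := \nu_W^*(X \times E) - \nu_W^*(D \times \ov{Y})$ is effective on $\ov{W}^N$. First one checks that $\ov{W}$ lies in neither $X \times E$ (done above) nor $D \times \ov{Y}$: if $\ov{W} \subseteq D \times \ov{Y}$, then $\ov{W}' \subseteq \nu_V^{-1}(D \times \ov{Y}) = \supp \nu_V^*(D \times \ov{Y})$, hence $\ov{W}'$ lies in one of the codimension-one components $Z$ of that support; but $C \ge 0$ forces $Z$ to occur in $\nu_V^*(X \times E)$ as well, so $Z \subseteq \nu_V^{-1}(X \times E)$ and therefore $\ov{W} = \nu_V(\ov{W}') \subseteq X \times E$, a contradiction. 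Hence every pullback below is defined, and pulling $C$ back along $\iota$ and then $p$, using $\nu_W \circ g = \nu_V \circ \iota \circ p$, gives $g^* F = p^*\iota^* C \ge 0$ on $(\ov{W}')^N$. Since $g$ is finite and surjective and $\ov{W}^N$ is normal, this forces $F \ge 0$: at a codimension-one point $\eta$ of $\ov{W}^N$ choose a preimage $\eta'$ under $g$---necessarily of codimension one, as $g$ is finite---so that $\operatorname{ord}_{\eta'}(g^* F) = e_{\eta'/\eta}\operatorname{ord}_\eta(F)$ with ramification index $e_{\eta'/\eta} \ge 1$; effectivity of $g^*F$ then gives $\operatorname{ord}_\eta(F) \ge 0$ for every such $\eta$, so $F$ is effective by normality of $\ov{W}^N$. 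This is exactly the inequality $\nu_W^*(D \times \ov{Y}) \le \nu_W^*(X \times E)$, i.e.\ $W$ has modulus $D$ relative to $E$.

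The only genuine difficulty is the comparison of the two normalizations $\ov{W}^N$ and $\ov{V}^N$, which is handled by passing through the auxiliary model $(\ov{W}')^N$; the reduction to integral cycles, the two non-containment checks, and the order/ramification computation are all routine. One could equally run the argument directly in terms of codimension-one points and their local rings instead of divisors, but the normal-model formulation seems cleanest.
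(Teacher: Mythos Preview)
Your argument is correct. The paper does not actually prove \propref{prop:CL*}; it merely cites \cite[Proposition~2.4]{KP-1} (and the analogous statements in \cite{BS}, \cite{KP}), so there is no in-paper proof to compare against. Your approach---lifting $\ov{W}$ to an irreducible component $\ov{W}'$ of $\nu_V^{-1}(\ov{W})$, passing to its normalization, and then descending the effectivity of the pulled-back divisor along the finite surjection $g:(\ov{W}')^N\to\ov{W}^N$---is exactly the standard one used in those references. One small remark: the final descent step (effectivity of $g^*F$ on a normal cover forces effectivity of $F$) is precisely the content of \lemref{lem:cancel} in this paper, so you could invoke that lemma directly rather than redoing the order/ramification computation by hand.
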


One checks using \propref{prop:CL*} that $(n \mapsto \un{z}_s(X|D,n))$ is a cubical abelian group. In particular, the groups  $\un{z}_s (X|D, n)$ form a complex with the boundary map $\partial= \sum_{i=1} ^n (-1)^i (\partial_i ^{0} - \partial_i ^1)$, where $\partial_i ^{\epsilon} = \iota_{n, i, \epsilon} ^*$. 

\begin{defn}[{\cite{BS}, \cite{KS}}]
The complex $(z_s(X|D, \bullet), \partial)$ is the nondegenerate complex associated to $(n\mapsto \un{z}_s(X|D,n))$, i.e., $z_s(X|D, n): = {\un{z}_s(X|D, n)}/{\un{z}_s(X|D, n)_{\dgn}}$. The homology $\CH_s(X|D, n): = {\rm H}_n (z_s(X|D, \bullet))$ for $n \geq 0$ is called \emph{higher Chow group of $X$ with modulus $D$}. If $X$ is equidimensional of dimension $d$, for $q \geq 0$, we write $\CH^q(X|D, n) = \CH_{d-q}(X|D, n)$.
\end{defn}


The following is a generalization of \cite[Proposition~2.12]{KP-1} (see also 
\cite[Lemma~2.7]{BS}). The reader can check that the only requirement
in the proof of \cite[Proposition~2.12]{KP-1} is that the underlying map
be flat over the complement of the modulus divisor. This is because
of the fact that an admissible cycle lies completely over this complement.

\begin{lem}\label{lem:fpb}
Let $f: Y \to X$ be a morphism in $\Sch_k$. Let $D \subsetneq X$ be an
effective Cartier divisor. Assume that $f^*(D)$ is a Cartier divisor on $Y$
such that the map $f^{-1}(X \setminus D) \to X \setminus D$ is flat
of relative dimension $d$. Then, there is a pull-back map
$f^*: z_r(X|D, \bullet) \to z_{d+r}(Y|f^*(D), \bullet)$ such 
$(f \circ g)^* = g^* \circ f^*$.
\end{lem}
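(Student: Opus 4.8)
The plan is to follow the proof of \cite[Proposition~2.12]{KP-1} essentially verbatim, the one new ingredient being that an admissible cycle is automatically supported over the complement of the modulus divisor, so that flatness of $f$ away from $D$ is all one ever needs. First I would record this. If $V \subset X \times \square^n$ is integral with modulus $D$ relative to $F^{\infty}_n$, then on $\ov{V}^N$ one has $\nu_V^*(D \times \ov{\square}^n) \le \nu_V^*(X \times F^{\infty}_n)$, and the latter is supported over $\ov{\square}^n \setminus \square^n$; hence so is $\nu_V^*(D \times \ov{\square}^n)$, which forces $V \subset (X \setminus D) \times \square^n$. Setting $U := X \setminus D$ and $f_U := f|_{f^{-1}(U)} : f^{-1}(U) \to U$, which is flat of relative dimension $d$ by hypothesis, I would then define $f^*[V] := (f_U \times \id_{\square^n})^*[V]$, the flat pull-back, a cycle of dimension $(d+r)+n$ supported on $f^{-1}(U) \times \square^n \subset Y \times \square^n$, and extend $\Z$-linearly.

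Next I would check the two admissibility conditions for each component $Z$ of $f^*[V]$. For the face condition and a face $F \subset \square^n$, flatness of $f_U \times \id$ together with the fact that $f^*[V]$ lies over $f^{-1}(U)$ gives $\dim\bigl((f_U \times \id)^{-1}(V) \cap (Y \times F)\bigr) = \dim\bigl(V \cap (U \times F)\bigr) + d \le (r + \dim F) + d$, which is exactly the bound needed for a cycle in $\un{z}_{d+r}(Y|f^*(D), n)$; this is identical to the computation for ordinary higher Chow groups, so I would merely cite it. The modulus condition is the only step with any modulus content: the dominant map $Z \to V$ extends to a dominant map $\ov{Z} \to \ov{V}$ of closures inside $Y \times \ov{\square}^n$ and $X \times \ov{\square}^n$, inducing a morphism $\mu : \ov{Z}^N \to \ov{V}^N$ on normalizations with $\nu_V \circ \mu = (f \times \id) \circ \nu_Z$. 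Since $f^*(D)$ is Cartier one has $(f \times \id)^*(D \times \ov{\square}^n) = f^*(D) \times \ov{\square}^n$ and $(f \times \id)^*(X \times F^{\infty}_n) = Y \times F^{\infty}_n$, and $\mu$ carries effective divisors to effective divisors, all the pull-backs in question being defined because $Z$ lies over $f^{-1}(U)$. Pulling $\nu_V^*(D \times \ov{\square}^n) \le \nu_V^*(X \times F^{\infty}_n)$ back along $\mu$ therefore yields $\nu_Z^*(f^*(D) \times \ov{\square}^n) \le \nu_Z^*(Y \times F^{\infty}_n)$ on $\ov{Z}^N$, i.e. $Z$ has modulus $f^*(D)$. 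Hence $f^* : \un{z}_r(X|D, n) \to \un{z}_{d+r}(Y|f^*(D), n)$ is well defined.

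It remains to see that $f^*$ is a morphism of complexes, descends to the nondegenerate quotients, and is functorial. The face maps $\partial_i^{\epsilon} = \iota_{n,i,\epsilon}^*$ are themselves flat pull-backs, so compatibility of flat pull-back with composition and with flat base change gives $f^* \circ \partial_i^{\epsilon} = \partial_i^{\epsilon} \circ f^*$, whence $f^*$ commutes with $\partial$; it plainly sends degenerate cycles to degenerate cycles, so it induces $f^* : z_r(X|D, \bullet) \to z_{d+r}(Y|f^*(D), \bullet)$; and $(f \circ g)^* = g^* \circ f^*$ is inherited from the corresponding identity for flat pull-back of cycles. The only part that is not pure bookkeeping is the modulus verification, and that too, once one knows admissible cycles sit over $X \setminus D$, amounts to transporting the defining divisorial inequality along $\mu$, so I do not anticipate a genuine obstacle.
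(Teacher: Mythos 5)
Your proposal is correct and follows essentially the same route as the paper, which simply observes that an admissible cycle with modulus lies entirely over $X \setminus D$ and then invokes the proof of \cite[Proposition~2.12]{KP-1} with flatness only needed over that complement --- precisely the argument you reconstruct (flat pull-back over $X\setminus D$, face condition from flatness, modulus condition by transporting the divisorial inequality through the map on normalizations). One small slip: the face maps $\partial_i^{\epsilon}=\iota_{n,i,\epsilon}^*$ are pull-backs along the (non-flat) face inclusions, defined via proper intersection, not flat pull-backs; the commutation $f^*\circ\partial_i^{\epsilon}=\partial_i^{\epsilon}\circ f^*$ is nevertheless the standard compatibility of flat pull-back with such restrictions, so your conclusion stands.
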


We often use the following result from \cite[Lemma 2.2]{KP-1}:

\begin{lem}\label{lem:cancel}
Let $f: Y \to X$ be a dominant map of normal integral $k$-schemes. Let $D$ be a Cartier divisor on $X$ such that the generic points of $\Supp(D)$ are contained in $f(Y)$. Suppose that $f^*(D) \ge 0$ on $Y$. Then, $D \ge 0$ on $X$.
\end{lem}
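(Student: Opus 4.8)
The plan is to argue by contradiction at the level of Weil divisors. Since $X$ is normal, the Cartier divisor $D$ is recovered from its associated cycle $D = \sum_i n_i[Z_i]$, where $Z_i$ runs over the prime divisors of $X$, the local ring $\sO_{X,\eta_i}$ at the generic point $\eta_i$ of $Z_i$ is a discrete valuation ring (because $X$ is normal and $\eta_i$ has codimension one), and $n_i = \operatorname{ord}_{Z_i}(D) = v_i(g_i)$ for $v_i$ the valuation of $\sO_{X,\eta_i}$ and $g_i \in k(X)^\times$ a local equation of $D$ near $\eta_i$. To prove $D \ge 0$ it suffices to show $n_i \ge 0$ for every $i$, so I would fix an index with $n_i < 0$ and seek a contradiction. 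Then $Z_i \subset \Supp(D)$, so by hypothesis $\eta := \eta_i$ lies in $f(Y)$.

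The first step is to find a prime divisor $W$ on $Y$ dominating $Z_i$. Pick $y \in Y$ with $f(y) = \eta$ and let $\phi\colon \sO_{X,\eta}\to\sO_{Y,y}$ be the induced local homomorphism. Since $f$ is dominant and $X,Y$ are integral, $\phi$ is the restriction of the field inclusion $k(X)\inj k(Y)$, hence injective, and $\phi(\pi)$ is a nonzero non-unit of $\sO_{Y,y}$ for $\pi$ a uniformizer of $\sO_{X,\eta}$. The ring $\sO_{Y,y}$ is a Noetherian normal local domain, so by Krull's Hauptidealsatz there is a prime $\fp\subset\sO_{Y,y}$ minimal over $(\phi(\pi))$, necessarily of height one; let $w$ be the corresponding codimension-one point, the generic point of a prime divisor $W\subset Y$. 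The key point is that $\phi(\pi)\in\fp$, so $\phi^{-1}(\fp)$ is a prime of $\sO_{X,\eta}$ containing $\fm_\eta = (\pi)$ and therefore equal to $\fm_\eta$; equivalently $f(w) = \eta$, so $W$ genuinely dominates $Z_i$ rather than mapping onto $X$ (where $D$ is trivial) or into a proper subset of $Z_i$.

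Now $\sO_{Y,w} = (\sO_{Y,y})_\fp$ is a discrete valuation ring with valuation $v_w$, and since $f(w) = \eta$ we obtain an injective local homomorphism of discrete valuation rings $\sO_{X,\eta}\to\sO_{Y,w}$; identifying $k(X)$ with a subfield of $k(Y)$ via this map, let $e := v_w(\pi) \ge 1$ be its ramification index. Writing $g := g_i = u\pi^{n_i}$ with $u\in\sO_{X,\eta}^\times$, the Cartier divisor $f^*(D)$ is represented near $w$ by $g$, so
\[
\operatorname{ord}_W(f^*(D)) = v_w(g) = v_w(u) + n_i\,v_w(\pi) = n_i e,
\]
because $v_w(u) = 0$. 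The hypothesis $f^*(D)\ge 0$ forces $n_i e \ge 0$, and as $e>0$ this gives $n_i\ge 0$, contradicting $n_i<0$. Hence $D\ge 0$.

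I expect the only genuinely delicate point to be the second step: producing a prime divisor $W$ on $Y$ whose generic point maps exactly to $\eta_{Z_i}$. This is precisely where the hypothesis that the generic points of $\Supp(D)$ lie in $f(Y)$ is indispensable — it supplies the point $y$ over $\eta_{Z_i}$ at which to localize — and it is also why one must avoid a codimension-one point of $Y$ lying over the generic point of $X$, at which $D$ pulls back trivially and carries no information about $n_i$. The remaining steps are routine manipulations of discrete valuation rings and ramification indices.
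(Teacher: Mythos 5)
Your proof is correct. There is in fact nothing in the paper to compare it against: the lemma is quoted from \cite[Lemma~2.2]{KP-1} and no proof is reproduced here, so your argument has to stand on its own, and it does. Two comments. First, when you pass from $n_i<0$ to ``$\eta_i$ lies in $f(Y)$'' you are implicitly using that $Z_i$ is an irreducible \emph{component} of $\Supp(D)$, so that $\eta_i$ really is one of the generic points covered by the hypothesis; this is true because on the normal scheme $X$ the support of a Cartier divisor is exactly the union of the prime divisors occurring with nonzero multiplicity in its associated Weil divisor (a rational function that is a unit in every codimension-one local ring of an open set is a unit on that open set), but it deserves a line. Second, the second half of your argument can be streamlined so as to avoid both Krull's Hauptidealsatz and the normality of $Y$: with $y$ chosen over $\eta=\eta_i$ as you do, effectivity of $f^*(D)$ already gives that the local equation $g$ lies in $\sO_{Y,y}$; if $n_i=v_\eta(g)<0$, then, since any ring strictly between the discrete valuation ring $\sO_{X,\eta}$ and its fraction field $k(X)$ is all of $k(X)$, the subring of $k(Y)$ generated by $\phi(\sO_{X,\eta})$ and $g$ contains $\pi^{-1}$ and is contained in $\sO_{Y,y}$, so $1=\phi(\pi)\cdot\pi^{-1}\in\fm_y$, a contradiction. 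Your route through a height-one prime $\fp$ minimal over $(\phi(\pi))$ with $\phi^{-1}(\fp)=\fm_\eta$, the induced extension of discrete valuation rings, and the computation $\operatorname{ord}_W(f^*(D))=n_ie$ is equally valid (normality of $Y$ is available by hypothesis and guarantees $\sO_{Y,w}$ is a DVR); it simply uses a bit more machinery than is strictly necessary, while the shortcut isolates the only two inputs really needed, namely normality of $X$ at codimension one and the locality of $\phi$.
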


\begin{defn}\label{defn:complex for moving}
Let $\mathcal{W}$ be a finite set of locally closed subsets of $X$ and let $e: \mathcal{W} \to \mathbb{Z}_{\geq 0}$ be a set function. Let $\un{z}^q _{\mathcal{W}, e} (X|D, n) $ be the subgroup generated by integral cycles $Z \in \un{z}^q (X|D, n)$ such that for each $W \in \mathcal{W}$ and each face $F \subset \square^n$, we have $\codim _{W \times F} (Z \cap (W \times F)) \geq q - e(W)$.

They form a subcomplex $\un{z}^q _{\mathcal{W},e}(X|D, \bullet)$ of $\un{z}^q (X|D, \bullet)$. Modding out by degenerate cycles, we obtain the subcomplex $z^q _{\mathcal{W},e} (X|D, \bullet)\subset z^q(X|D, \bullet)$. We write $z^q _{\mathcal{W}} (X|D, \bullet) := z^q_{\mathcal{W}, 0} (X|D, \bullet)$. The number $e(W)$ is called the excess dimension of the intersection $Z \cap (W \times F)$.

Given a function $e: \sW \to \Z_{\ge 0}$, define $(e-1): \sW \to \Z_{\ge 0}$ by $(e-1)(W) = {\rm max}\{e(W)-1, 0\}$. This gives an inclusion $z^q _{\mathcal{W},e-1} (X|D, \bullet) \subset z^q _{\mathcal{W},e} (X|D, \bullet)$.
\end{defn}

\enlargethispage{25pt}

We also use the following from \cite[Proposition~4.3]{KP-2} in our proof of our moving lemma.

\begin{prop}[Spreading lemma]\label{prop:spread} Let $k \subset K$ be a purely transcendental extension. Let $(X,D)$ be a smooth quasi-projective $k$-scheme with an effective Cartier divisor, and let $\mathcal{W}$ be a finite collection of locally closed subsets of $X$. Let $(X_K, D_K)$ and $\mathcal{W}_K$ be the base changes via $\Spec (K) \to \Spec (k)$. Let ${\rm pr}_{K/k} : X_K \to X_k$ be the base change map. Then for every set function $e: \sW \to \Z_{\ge 0}$, the pull-back maps 
\begin{equation}\label{eqn:spread-0}
{\rm pr}_{K/k} ^* : \frac{z^q_{\sW,e} (X|D, \bullet) }{z_{\mathcal{W}}^q (X|D, \bullet)} \to \frac{z^q_{\sW_K,e} (X_K|D_K, \bullet )}{z_{\mathcal{W}_K}^q (X_K|D_K, \bullet)}
\end{equation}
and 
\begin{equation}\label{eqn:spread-1}
{\rm pr}_{K/k} ^* : \frac{z^q_{\sW,e} (X|D, \bullet) }{z_{\mathcal{W},e-1}^q (X|D, \bullet)} \to \frac{z^q_{\sW_K,e} (X_K|D_K, \bullet )}{z_{\mathcal{W}_K,e-1}^q (X_K|D_K, \bullet)}
\end{equation}
are injective on homology.
\end{prop}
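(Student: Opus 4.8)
\textbf{Proof strategy for Proposition \ref{prop:spread}.}
The plan is to follow the classical argument for the spreading-out/specialization of cycles, adapted to the modulus setting, which is essentially already in \cite{KP-2} and in Bloch's original treatment; the only points needing care are that all constructions preserve the modulus condition and the excess-dimension bounds defining the subcomplexes $z^q_{\sW,e}$. Since $k \subset K$ is purely transcendental, we may write $K$ as a filtered colimit of subfields $k(\mathbf{t}) = k(t_1,\dots,t_m)$, and then further as a filtered colimit of localizations $k[\mathbf{t}]_\mathfrak{p}$; as homology commutes with filtered colimits, it suffices to treat $K = k(\mathbf{t})$, and in fact to produce, for each class in the source that dies in the target, a bounding chain already defined over some $k[\mathbf{t}]_f$. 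So the real content is: a relative cycle over an open $U = \Spec k[\mathbf{t}]_f \subset \A^m_k$ (i.e. a cycle on $X_U | D_U$ satisfying faces and modulus fibrewise) whose generic fibre over $\Spec K$ is a boundary, is itself a boundary up to shrinking $U$ and up to adding something supported over a proper closed subset — but the latter correction, restricted to the generic point, is zero, hence the original generic-fibre class was already zero. This is the standard ``continuity'' principle.

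First I would set up the relative cycle groups $z^q_{\sW,e}(X \times U | D \times U, \bullet)$ over a smooth affine base $U$ over $k$, where admissibility means the face and modulus conditions hold after base change to every point of $U$ (equivalently, the cycle is equidimensional over $U$ with the right fibrewise codimensions, and the closure in $X \times U \times \ov\square^n$ satisfies the modulus inequality on the normalization — here I would invoke Lemma \ref{lem:cancel} to pass between the inequality on the total space and on fibres, using that a purely transcendental extension of the base does not create new components). The pull-back ${\rm pr}_{K/k}^*$ factors as $z^q_{\sW,e}(X|D,\bullet) \to z^q_{\sW,e}(X\times U|D\times U,\bullet) \to z^q_{\sW_K,e}(X_K|D_K,\bullet)$, the first map being flat pull-back along $X\times U \to X$ (legitimate by Lemma \ref{lem:fpb}, since $X\times U \to X$ is smooth, hence flat over $X\setminus D$), and the second being restriction to the generic fibre. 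The key lemma is then: the restriction-to-generic-fibre map $z^q_{\sW_U,e}(X\times U|D\times U,\bullet) \to \colim_U z^q_{\sW_K,e}(X_K|D_K,\bullet)$ is surjective on cycles and injective on homology; surjectivity on cycles is spreading-out (clear denominators), and injectivity on homology says a relative cycle whose generic fibre bounds is (after shrinking $U$) itself a boundary. Granting this lemma, suppose $\alpha \in z^q_{\sW,e}(X|D,\bullet)$ is a cycle with ${\rm pr}_{K/k}^*\alpha = \partial\beta_K$ in the quotient complex; spread $\beta_K$ to $\beta_U$ over some $U$, so $\partial\beta_U$ and ${\rm pr}^*\alpha$ (the latter being the pull-back of a cycle over $k$, hence ``constant'' over $U$) agree on the generic fibre, hence their difference is a relative boundary-or-lower-excess cycle supported over a proper closed $Z \subsetneq U$; restricting along a $k$-rational point $u \in U(k) \setminus Z$ (available since $U$ is a nonempty open in affine space over $k$ — if $k$ is finite one first makes a further transcendental base change, which is harmless) and using that the composite $z^q(X|D,\bullet) \xrightarrow{{\rm pr}^*} z^q(X\times U|D\times U,\bullet) \xrightarrow{u^*} z^q(X|D,\bullet)$ is the identity, we conclude $\alpha = \partial(u^*\beta_U)$ modulo the relevant smaller subcomplex. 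This proves injectivity on homology for both \eqref{eqn:spread-0} and \eqref{eqn:spread-1} simultaneously, the two cases being identical save for whether one quotients by $z^q_\sW$ or by $z^q_{\sW,e-1}$.

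The main obstacle — and the place where the modulus hypothesis really enters — is verifying that the spread-out chain $\beta_U$ and the restriction $u^*\beta_U$ genuinely satisfy the modulus condition, not merely the face condition. For the face condition this is routine generic flatness. For the modulus condition one must check: (i) if $V_K \subset X_K \times \square^n_K$ has modulus $D_K$, then the spread $V_U$ over a small enough $U$ has modulus $D_U$ fibrewise — this follows because the modulus inequality $\nu^*(D\times\ov\square^n) \le \nu^*(X\times F_n^\infty)$ on $\ov V_K^N$ is a closed condition that spreads, and by Lemma \ref{lem:cancel} it then holds on almost every fibre; and (ii) restricting $V_U$ to a fibre over $u\in U(k)$ preserves modulus, which is exactly the statement that $(X,D)\times\{u\} \to (X,D)\times U$ is a morphism of modulus pairs together with the containment Proposition \ref{prop:CL*} applied to components of the fibre (one must be slightly careful that the fibre of the normalization dominates the normalization of the fibre, which is why a generic $u$ avoiding the non-normal locus of the projection $\ov V_U^N \to U$ is chosen). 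Assembling these, together with the observation that the excess-dimension bounds $\codim_{W\times F}(Z\cap(W\times F)) \ge q - e(W)$ are likewise generically-on-$U$ preserved (upper semicontinuity of fibre dimension), completes the verification that every construction stays inside the subcomplexes $z^q_{\sW,e}$, and the proposition follows.
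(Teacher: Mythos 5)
The paper itself does not reprove this proposition: it cites \cite[Proposition~4.3]{KP-2} and merely observes that the argument there also yields \eqref{eqn:spread-1}. Your proposal reconstructs what is essentially that standard spreading/specialization argument: spread the bounding chain $\beta_K$ over an open $U\subset\A^m_k$, shrink $U$ so that the face, excess and modulus conditions hold over all of $U$, specialize at a point of $U$, and use that pull-back followed by specialization is the identity. Most of the needed verifications are correctly identified (in particular, \propref{prop:CL*} is indeed the right tool for specialization to a closed fibre, and generic flatness/semicontinuity handles faces and excess). Two details are stated loosely, though: the difference $\mathrm{pr}^*\alpha-\partial\beta_U$ is not ``supported over a proper closed subset of $U$''; rather its components either fail to dominate $U$ (discard them by shrinking $U$) or dominate $U$ with generic fibre in the smaller subcomplex, and one must shrink again to make this fibrewise. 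Similarly, \lemref{lem:cancel} does not apply directly to give the modulus condition for the spread $\beta_U$: components of the negative part of the relevant divisor on $\ov{\beta}_U^N$ that do not dominate $U$ are not met by the generic fibre, so the hypothesis of that lemma on the generic points of the support fails; one must first shrink $U$ to remove them, after which no appeal to \lemref{lem:cancel} is needed.

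The genuine gap is the finite-field case. The proposition is stated (and is used, via \thmref{thm:moving-PS} and \corref{cor:Proj-all}) over an arbitrary field $k$, and when $k$ is finite a nonempty open $U\subset\A^m_k$ may have no $k$-rational point. Your fix --- ``first make a further transcendental base change, which is harmless'' --- is circular: to descend a vanishing statement from $k(s)$ back to $k$ you need precisely the injectivity on homology of $\mathrm{pr}^*_{k(s)/k}$, i.e.\ an instance of the proposition for a purely transcendental extension of a finite field. The standard repair stays inside your framework: choose two closed points $u_1,u_2$ of the shrunken $U$ whose residue degrees $d_1,d_2$ over $k$ are coprime, specialize at $u_i$ to conclude that $\alpha_{k(u_i)}$ bounds modulo the smaller subcomplex over $k(u_i)$, and push forward along the finite flat map $X_{k(u_i)}\to X$ (push-forward for cycles with modulus exists, as in \cite[Proposition~2.10]{KP-1}, and visibly preserves the $\sW$-subcomplexes), so that $d_1\alpha$ and $d_2\alpha$ bound and hence so does $\alpha$. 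With this replacement, and the bookkeeping corrections above, your argument is complete and agrees in substance with the proof in \cite{KP-2} that the paper invokes.
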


We remark that \propref{prop:spread} is stated in \cite[Proposition~4.3]{KP-2} only for \eqref{eqn:spread-0} but the argument given there simultaneously proves \eqref{eqn:spread-1} as well.

\section{Moving lemma for projective spaces}\label{sec:PSM}

In this section, we prove our moving lemma for the modulus pair $(X,D)$, where $X$ is a projective space over $k$ and $D$ is a hyperplane in $X$. We use the following:

\begin{lem}[{\cite[Lemma 1.2]{Bl-1}}]\label{lem:Bloch-1}
Let $X \in \Sch_k$ and let $G$ be a connected algebraic group over $k$ acting on $X$. Let $A, B \subset X$ be closed subsets. Assume that the fibers of the action map $G \times A \to X$, given by $(g,a) \mapsto g\cdot a$, all have the same dimension and that this map is dominant.

Assume moreover that there is an overfield $k \inj K$ and a $K$-morphism $\psi: X_K \to G_K$. Let $\emptyset \neq U \subset X$ be open such that
for every $x \in U_K$, we have
\[
{\rm tr. deg}_k \left(\phi \circ \psi(x), \pi(x)\right) \ge \dim(G),
\]
where $\pi: X_K \to X$ and $\phi:G_K \to G$ are the base changes. Define $
\theta: X_K \to X_K
$
by $\theta(x) = \psi(x) \cdot x$ and assume that $\theta$ is an isomorphism. 
Then, the intersection $\theta(A_K \cap U_K) \cap B_K$ is proper.
\end{lem}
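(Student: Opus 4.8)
\emph{Strategy and Step 1: reduction to an incidence variety.} I would follow Bloch's original argument, reducing the assertion to a dimension count that is forced by the equidimensionality of the action map $\mu: G\times A\to X$ together with the genericity of $\psi$. Put $Z:=\mu^{-1}(B)\subseteq G\times A$. Since every point of $A_K\cap U_K$ lies in $A_K$, the rule $x\mapsto(\psi(x),x)$ defines a locally closed immersion $\Phi:A_K\cap U_K\inj G_K\times_K A_K$ onto the graph of $\psi|_{A_K\cap U_K}$, and $\mu_K\circ\Phi=\theta|_{A_K\cap U_K}$. Because $\theta$ is an automorphism of $X_K$, applying $\theta^{-1}$ shows $\theta(A_K\cap U_K)\cap B_K$ is isomorphic to $\Phi^{-1}(\mu_K^{-1}(B_K))$; composing $\Phi$ with the base-change projection $(G\times_k A)_K\to G\times_k A$, this set equals $\rho^{-1}(Z)$, where $\rho:A_K\cap U_K\to G\times_k A$ is the $k$-morphism $x\mapsto(\phi\circ\psi(x),\pi(x))$. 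Hence it suffices to show that every irreducible component of $\rho^{-1}(Z)$ has dimension at most $\dim A+\dim B-\dim X$; treating the irreducible components of $A$ separately (and discarding those disjoint from $U$), I may assume $A$ is irreducible with $A\cap U\neq\emptyset$.

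\emph{Step 2: two dimension estimates.} Since $\mu$ is dominant with all fibres of the same dimension, that common dimension is $e:=\dim G+\dim A-\dim X$, so $\dim\mu^{-1}(C)\le\dim C+e$ for every closed $C\subseteq X$. Let $A_B:=\{a\in A\mid G\cdot a\subseteq B\}$. Continuity of the action implies $A_B$ is stable under specialization, hence is a closed subset of $A$; restricting $\mu$ to $G\times A_B\to B$, a map all of whose fibres have dimension $\le e$, gives $\dim G+\dim A_B\le\dim B+e$, that is,
\[
\dim A_B\le\dim A+\dim B-\dim X.
\]
This is the step that genuinely uses the equidimensionality of $\mu$, not merely its dominance.

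\emph{Step 3: the genericity input.} Let $T$ be an irreducible component of $\rho^{-1}(Z)$ with generic point $a$; then $\dim T={\rm tr. deg}_K k(a)$ and $\rho(a)=(\phi\psi(a),\pi(a))$ lies in $Z$. Viewing $\rho(a)$ in the fibre $Z_{\pi(a)}:=Z\times_A\Spec k(\pi(a))=\{g\mid g\cdot\pi(a)\in B\}$, a closed subscheme of $G_{k(\pi(a))}:=G\times_k k(\pi(a))$, its closure there has dimension at least ${\rm tr. deg}_{k(\pi(a))}\!\big(k(\phi\psi(a))\cdot k(\pi(a))\big)\ge\dim G$ by the transcendence-degree hypothesis on $\psi$ applied at $x=a$. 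Since $G$ is connected, $G_{k(\pi(a))}$ is irreducible of dimension $\dim G$, which forces $Z_{\pi(a)}=G_{k(\pi(a))}$; equivalently $G\cdot\pi(a)\subseteq B$, i.e. $\pi(a)\in A_B$. Now the elementary base-change identity $k(a)=k(\pi(a))\cdot K$ (the residue field at a point of $(A\cap U)\times_k\Spec K$ is the compositum of the residue fields of its two images) gives
\[
\dim T={\rm tr. deg}_K k(a)={\rm tr. deg}_K\big(k(\pi(a))\cdot K\big)\le{\rm tr. deg}_k k(\pi(a))\le\dim A_B\le\dim A+\dim B-\dim X,
\]
and the proof is complete.

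\emph{Main obstacle.} Steps 1 and 2 and the base-change identity are routine; the crux is Step 3, where one must convert the transcendence-degree condition on $\psi$ into the rigid conclusion that $\pi(a)$ lies in the ``bad-orbit locus'' $A_B$, and then control $\dim A_B$. This reflects Bloch's key observation that a sufficiently general translate of $A$ can meet $B$ only along the locus of points whose entire $G$-orbit already lies in $B$ — and it is precisely there that the equidimensionality of $\mu$ is indispensable.
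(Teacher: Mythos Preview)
The paper does not give its own proof of this lemma: it is quoted verbatim from Bloch \cite[Lemma~1.2]{Bl-1} and used as a black box, so there is no in-paper argument to compare against.

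Your write-up, however, contains a genuine gap in Step~3. You assert that the hypothesis yields
\[
{\rm tr.\,deg}_{k(\pi(a))}\bigl(k(\phi\psi(a))\cdot k(\pi(a))\bigr)\ge\dim G,
\]
but the hypothesis only says ${\rm tr.\,deg}_k\,k(\rho(a))\ge\dim G$, i.e.\ transcendence degree over $k$, not over $k(\pi(a))$. These differ by ${\rm tr.\,deg}_k\,k(\pi(a))$, which need not vanish. Consequently your conclusion $\pi(a)\in A_B$ does not follow, and in fact the whole $A_B$-strategy collapses in basic examples. Take $G=\G_a$ acting on $X=\A^1$ by translation, $A=X$, $B=\{0\}$, $K=k(G)=k(t)$, $\psi\equiv t$ (constant), $U=X$. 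All hypotheses of the lemma hold and $\theta(x)=x+t$. Here $Z=\{g+a=0\}\subset G\times A$ and $\rho^{-1}(Z)=\{-t\}\subset X_K$, whose unique point $a=-t$ has $\pi(a)=\mathrm{gen}_X$ (since $-t$ is transcendental over $k$). One computes $k(\pi(a))=k(\rho(a))=k(t)$, so ${\rm tr.\,deg}_{k(\pi(a))}k(\rho(a))=0<1=\dim G$; moreover $A_B=\emptyset$, so $\pi(a)\notin A_B$. Thus both your inequality and your conclusion in Step~3 fail, although the lemma's assertion (a $0$-dimensional intersection, as expected) is of course correct here.

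Bloch's original argument does not pass through the locus $A_B$. The correct mechanism is a direct transcendence-degree count relating $k(a)$, $K$, and $k(\rho(a))$ simultaneously (using $k(a)=K\cdot k(\rho(a))$), rather than first forcing $\rho(a)$ to be generic in its $G$-fibre; your Step~2 bound on $\dim A_B$ is therefore not the relevant input.
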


\begin{cor}\label{cor:Bloch-2}
Let $X \in \Sch_k$ and let $G$ be a connected algebraic group over $k$ acting transitively on $X$. Let $Y \in \Sch_k$ and let $\emptyset \neq A \subset X$ and $B \subset X \times Y$ be closed subsets. Let $G$ act on $X \times Y$ by $g \cdot (x,y) = (g\cdot x, y)$. 

Let $K=k(G)$ and let $\phi: G_K \to G$ be the base change. Suppose $\psi : (X \times Y)_K \to G_K$ is a $K$-morphism and let $U \inj X \times Y$ be an open subset such that
\begin{enumerate}
\item the image of every point of $U_K$ under the composite map $(X \times Y)_K \xrightarrow{\psi} G_K \xrightarrow{\phi} G$ is the generic point of $G$. 
\item the map $\theta: (X \times Y)_K \to (X \times Y)_K$ given by $\theta(z) = \psi(z)\cdot z$, is an isomorphism.
\end{enumerate}
Then, the intersection $\theta((A \times Y)_K \cap U_K) \cap (B_K \cap U_K)$ is proper on $U_K$.
\end{cor}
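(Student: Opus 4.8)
The plan is to deduce this directly from \lemref{lem:Bloch-1} by taking the ambient space there to be $X \times Y$ and the closed subsets to be $A \times Y$ and $B$. First I would observe that since $G$ acts transitively on $X$, the induced action of $G$ on $X \times Y$ (through the first factor) has the property that the action map $G \times (A \times Y) \to X \times Y$ is dominant: it already surjects onto $X$ in the first coordinate by transitivity, and the second coordinate is untouched, so for a nonempty $A$ the map hits a dense subset (in fact all of $X \times Y$ when $A = X$, and in general its image is $X \times \pi_Y(A \times Y)$, which suffices once one restricts to the relevant locus). Moreover, all fibers of this action map have the same dimension: the fiber over $(x,y)$ is $\{(g,a,y') : g \cdot a = x,\ y' = y\}$, whose dimension is $\dim(\operatorname{Stab})$ plus the dimension of $A \cap (G\text{-orbit through } x)$-type data, and by transitivity of the $G$-action on $X$ this is independent of $(x,y)$. (If $A$ is not pure-dimensional one decomposes into irreducible components and argues componentwise.)

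Next I would match the trace-degree hypothesis. In \lemref{lem:Bloch-1} one needs, for every $z \in U_K$, that $\operatorname{tr.deg}_k(\phi\circ\psi(z), \pi(z)) \ge \dim(G)$, where now $\pi \colon (X\times Y)_K \to X\times Y$. Condition (1) of the corollary says that $\phi\circ\psi$ sends every point of $U_K$ to the generic point of $G$; since $K = k(G)$, the generic point of $G$ has residue field $k(G) = K$, which has transcendence degree $\dim(G)$ over $k$. Hence $\operatorname{tr.deg}_k(\phi\circ\psi(z)) = \dim(G)$ already, and a fortiori the joint trace degree with $\pi(z)$ is at least $\dim(G)$. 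Condition (2) is precisely the hypothesis that $\theta$ is an isomorphism. So all hypotheses of \lemref{lem:Bloch-1} are in force with $A \rightsquigarrow A\times Y$, $B \rightsquigarrow B$, $X \rightsquigarrow X \times Y$, $U \rightsquigarrow U$.

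Applying \lemref{lem:Bloch-1} then yields that $\theta((A\times Y)_K \cap U_K) \cap B_K$ is proper. Finally I would intersect with $U_K$: since $\theta$ is an isomorphism of $(X\times Y)_K$, the set $\theta((A\times Y)_K \cap U_K)$ and the set $B_K \cap U_K$ both sit inside $U_K$ after applying $\theta$ appropriately — more precisely, properness of $\theta((A\times Y)_K \cap U_K) \cap B_K$ inside $(X\times Y)_K$ restricts to properness of the intersection inside the open subset $U_K$, because intersecting with an open subscheme can only drop dimensions of the relevant loci while codimensions are computed locally. This gives the asserted properness of $\theta((A\times Y)_K \cap U_K) \cap (B_K \cap U_K)$ on $U_K$. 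The one point requiring a little care — and the only real obstacle — is the equidimensionality and dominance of the action map $G \times (A\times Y) \to X \times Y$ when $A$ is an arbitrary (possibly reducible or non-equidimensional) closed subset; this is handled by passing to irreducible components of $A$ and using transitivity of $G \curvearrowright X$ to see that the orbit structure, hence the fiber dimension, is uniform, exactly as in Bloch's original argument.
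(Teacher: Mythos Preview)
Your proposal is correct and matches the paper's intent: the paper states \corref{cor:Bloch-2} as an immediate corollary of \lemref{lem:Bloch-1} without further proof, and your argument is precisely the natural deduction---apply \lemref{lem:Bloch-1} with $X$ replaced by $X\times Y$, $A$ by $A\times Y$, and check that transitivity of $G$ on $X$ forces the action map $G\times(A\times Y)\to X\times Y$ to be surjective with constant fibre dimension, while hypothesis (1) forces the transcendence-degree condition. One small clarification: your remark that the image is ``$X\times\pi_Y(A\times Y)$'' is unnecessary, since $\pi_Y(A\times Y)=Y$ and the map is in fact surjective; and the fibre-dimension check does not really require decomposing $A$ into components, since for every $x_0\in X$ the fibre $\{(g,a):g\cdot a=x_0\}$ projects onto $A$ with each fibre a coset of a stabiliser, all of which have the same dimension by transitivity.
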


We let $\A^r_k = \Spec(k[x_1, \cdots , x_r])$ and let $\P^r_k = {\rm Proj}(k[X_1, \cdots , X_r, X_0])$, where we set $x_i = {X_i}/{X_0}$ for $1 \le i \le r$. This yields an open immersion $j_0: \A^r_k \inj \P^r_k$. Let $H_{\infty} = \P^r_k \setminus \A^r_k$ be the hyperplane at infinity. We write the homogeneous coordinates of $\P^r_k$ as $(X_1; \cdots ; X_r; X_0)$. We fix this choice of coordinates of $\A^r_k$ and $\P^r_k$. Set $u = \stackrel{r}{\underset{i =1}\prod} x_i
\in k[x_1, \cdots , x_r]$. 

Let $K = k(\P^r_k)$ and consider the point $\eta = (u, \cdots , u) \in \P^r_K$ 
so that its image under the projection $\P^r_K \to \P^r_k$ is the generic 
point of $\P^r_k$. Let $U_{+} \inj \P^r_K \times \ov{\square}_K$ be the open subset $(\P^r_K \times \square_K) \cup (\A^r_K \times \ov{\square}_K)$ and set $\mathcal{Y} = H_{\infty} \times \{\infty\} = (\P^r_K \times \ov{\square}_K) \setminus U_{+}$. For $K$-schemes $X$ and $X'$, we write the product $X \times_K X'$ as $X \times X'$.

\begin{lem}\label{lem:generic-pt}
Let $\phi_{\eta}: \A^r_K \times \square_K \to \A^r_K$ denote the map $\phi_{\eta} (x, t) = x + \eta \cdot t$. Then, $\phi_{\eta}$ uniquely extends to a morphism $\phi_{\eta}|_{U_+}: U_{+} \to \P^r_K$ such that the following hold.
\begin{enumerate}
\item $U_{+}$ is the largest open subset of $\P^r_K \times \ov{\square}_K$ over which $\phi_{\eta}$ can be extended to a regular morphism. 
\item The extension of $\phi_{\eta}$ on $\P^r_K \times \square_K$ is a smooth morphism.
\item $(\phi_{\eta}|_{U_{+}})^{-1}(\A^r_K) = \A^r_K \times \square_K$.
\item $(\phi_{\eta}|_{U_{+}})^{-1}(H_{\infty}) = (\A^r_K \times \{\infty\}) +(H_{\infty} \times \square_K)$.
\end{enumerate}
\end{lem}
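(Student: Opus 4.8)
The plan is to analyze the rational map $\phi_\eta \colon \P^r_K \times \ov\square_K \dashrightarrow \P^r_K$ explicitly in homogeneous coordinates and determine precisely its locus of indeterminacy. Writing the coordinates on $\ov\square_K = \P^1_K$ as $(T_1 : T_0)$ with $t = T_1/T_0$, and recalling $\eta = (u, \dots, u)$ with $u = \prod_i x_i \in K$, the affine formula $\phi_\eta(x,t) = x + \eta t$ has components $x_i + u t$. Clearing denominators, I would write the map as
\[
\phi_\eta\bigl((X_1 : \cdots : X_r : X_0),\, (T_1 : T_0)\bigr)
= \bigl(X_1 T_0 + u X_0 T_1 : \cdots : X_r T_0 + u X_0 T_1 : X_0 T_0\bigr),
\]
where $u$ is now regarded as a homogeneous-degree-zero rational function on $\P^r_K$ (so strictly one should clear its denominator too, but this does not change the indeterminacy locus on the relevant charts). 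First I would check that on $\A^r_K \times \square_K$ this recovers $\phi_\eta$ and extends it, giving the required formula; the compatibility is immediate from the definitions.

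Next I would identify the base locus. On the chart $X_0 \neq 0$ (i.e.\ over $\A^r_K$), the only way all coordinates vanish is $T_0 = 0$, i.e.\ $t = \infty$; but then the first $r$ coordinates become $u X_0 T_1 \neq 0$ generically, so the map is actually defined there except where additionally $u X_0 = 0$ — however $u = \prod x_i$ is a unit on the generic point and the base-locus computation should be done scheme-theoretically on all of $\A^r_K \times \ov\square_K$. The cleaner approach: show directly that $\phi_\eta$ extends regularly to $\A^r_K \times \ov\square_K$ (sending $(x, \infty) \mapsto$ the point $(u : \cdots : u : 0) = (1 : \cdots : 1 : 0) \in H_\infty$, using that $u \neq 0$) and to $\P^r_K \times \square_K$ (where, on the chart $t$ finite, the formula $x \mapsto x + \eta t$ makes sense projectively: in coordinates $(X_1 T_0 + u X_0 T_1 : \cdots : X_0 T_0)$ with $T_0 \neq 0$, at a point of $H_\infty$ we have $X_0 = 0$ but some $X_i \neq 0$, so the $i$-th coordinate $X_i T_0 \neq 0$). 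Then I would show the map does not extend over any point of $\sY = H_\infty \times \{\infty\}$: at such a point $X_0 = 0$, $T_0 = 0$, so every coordinate $X_i T_0 + u X_0 T_1$ and $X_0 T_0$ vanishes identically to first order, and blowing up one sees the two ``directions'' of approach (along $H_\infty$ first versus along $\{\infty\}$ first) give different limits — e.g.\ approaching within $\A^r_K \times \ov\square_K$ gives limit in $H_\infty$ at the point $(1:\cdots:1:0)$, while approaching within $\P^r_K \times \square_K$ from a point $p \in H_\infty$ gives limit $p$ itself. Since these limits differ (for varying $p$), there is no regular extension over $\sY$; this proves (1), that $U_+ = (\P^r_K \times \square_K) \cup (\A^r_K \times \ov\square_K)$ is the maximal domain.

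For (3) and (4) I would compute the scheme-theoretic preimages directly from the homogeneous formula. The preimage of $\A^r_K = \{X_0 \neq 0\}$ is $\{X_0 T_0 \neq 0\}$ intersected with $U_+$, which is exactly $\A^r_K \times \square_K$ — this gives (3). For (4), $\phi_\eta^*(H_\infty) = \phi_\eta^*\{X_0 = 0\}$ is the divisor $\{X_0 T_0 = 0\}$ on $U_+$, which decomposes as $\{X_0 = 0\} + \{T_0 = 0\}$, i.e.\ $(H_\infty \times \ov\square_K)|_{U_+} + (\P^r_K \times \{\infty\})|_{U_+}$; but within $U_+$ we have removed $\sY = H_\infty \times \{\infty\}$, so $(H_\infty \times \ov\square_K) \cap U_+ = H_\infty \times \square_K$ and $(\P^r_K \times \{\infty\}) \cap U_+ = \A^r_K \times \{\infty\}$, giving $\phi_\eta^*(H_\infty) = (H_\infty \times \square_K) + (\A^r_K \times \{\infty\})$ as claimed. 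Finally, for (2), on $\P^r_K \times \square_K$ I would exhibit $\phi_\eta|_{\P^r_K \times \square_K}$ as a composite or a family of isomorphisms: for each fixed $t \in \square_K$, translation by the vector $\eta t$ on $\A^r_K$ extends to an automorphism of $\P^r_K$ fixing $H_\infty$ pointwise (it is an element of the affine group $\mathrm{PGL}$), so $\phi_\eta|_{\P^r_K \times \square_K}$ is the restriction of the action map $\A^1_K \times \P^r_K \to \P^r_K$ of the one-parameter unipotent subgroup $t \mapsto (\text{translation by } \eta t)$; this action map is a morphism of smooth $K$-schemes and is smooth because it is, fiberwise over $\square_K$, an isomorphism (hence flat with smooth fibers), and the total spaces are smooth. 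I expect the main obstacle to be the careful verification of (1) — proving non-extendability over every point of $\sY$ rather than just some point — which requires either a valuative-criterion argument with two well-chosen discrete valuations (e.g.\ orders of vanishing of $X_0$ and $T_0$) or an explicit blow-up of $\P^r_K \times \ov\square_K$ along $H_\infty \times \{\infty\}$ showing the proper transform of the graph still has non-trivial fiber over $\sY$; everything else is a bookkeeping computation in homogeneous coordinates.
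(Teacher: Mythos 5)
Your route is essentially the paper's: write $\phi_{\eta}$ bihomogeneously, read off (3) and (4) from the formula (your divisor computation $\phi_{\eta}^*(H_\infty)=\{X_0T_0=0\}\cap U_+=(H_\infty\times\square_K)+(\A^r_K\times\{\infty\})$ is correct and in fact more explicit than the paper's), and get (2) by viewing the map on $\P^r_K\times\square_K$ as (the restriction of) the translation action, i.e.\ the automorphism $\left((X_1;\cdots;X_r;X_0),t\right)\mapsto\left((X_1+utX_0;\cdots;X_r+utX_0;X_0),t\right)$ followed by the smooth projection --- which is exactly the paper's argument for (2); your phrase ``fiberwise over $\square_K$ an isomorphism, hence flat with smooth fibers'' conflates fibers over the parameter with fibers of the map, but the composite-with-projection reading you also offer is the correct one. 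One small conceptual slip: $u=\prod_i x_i$ is a nonzero element of the base field $K=k(\P^r_k)$, i.e.\ a scalar for the $K$-schemes in question, not a rational function on the source copy of $\P^r_K$, so there is no denominator to clear; your later computations do treat $u$ correctly as a unit.

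The genuine gap is in (1). Your indeterminacy argument on $\sY=H_\infty\times\{\infty\}$ compares the limit along $\A^r_K\times\{\infty\}$, where the map is constantly $(1:\cdots:1:0)$, with the limit along $\{p\}\times\square_K$, where it is constantly $p$. These differ only for $p\neq(1:\cdots:1:0)$, so as written you have not ruled out a regular extension at the single point $q_0=\left((1:\cdots:1:0),\infty\right)$ --- and this is precisely the delicate point: in the paper's Lemma on the blow-up, the residual indeterminacy point $P_\infty$ of $\ov{\phi}_\eta$ lies over $q_0$, so the behaviour there is genuinely special. For $r\ge 2$ the hole is cheap to patch (the indeterminacy locus is closed and you have shown it contains $\sY\setminus\{q_0\}$, which is dense in the irreducible $\sY$), but you do not say this; and for $r=1$, where $\sY=\{q_0\}$, your argument gives no information at all, while the lemma is used for all $r\ge 1$. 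A direct argument at $q_0$ is needed: in local coordinates $a=X_0/X_i$, $b_j=X_j/X_i$, and $s$ the affine coordinate vanishing at $\infty$, the map reads $(sb_1+ua:\cdots:s+ua:\cdots:sa)$; approaching $q_0$ along the curve $s=-ua$, $b_j=1$ gives the limit $(0:\cdots:0:1)$, whereas approaching along $a=0$ gives $(1:\cdots:1:0)$, so no extension exists. (Equivalently, $s+ua$ and $sa$ are coprime nonunits in the regular local ring at $q_0$, so the fractional ideal generated by the coordinate functions is not principal there, which is the criterion for regularity of a map to projective space at a normal point.) With this supplement --- or with the graph/blow-up criterion you mention only in passing --- your proof of (1) is complete; note the curve $s=-ua$ is exactly the normal direction singled out by $P_\infty$ later in the paper.
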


\begin{proof}
Define the rational map $\phi_{\eta}: \P^r_K \times \ov{\square}_K \dashrightarrow  \P^r_K$ by
\begin{equation}\label{eqn:generic-pt-rational}
\phi_{\eta}((X_1; \cdots ; X_r; X_0), (T_0; T_{1})) =  (T_1X_1 + uT_0X_0; 
\cdots ; T_1X_r + u T_0 X_0; T_1 X_0).
\end{equation} 

Note that $\phi_{\eta}((X_1; \cdots ; X_r; 1), (t; 1)) = 
(X_1 + ut; \cdots ; X_r + ut; 1)$ so that $\phi_{\eta}$ restricts to the given 
map on $\A^r_K \times \square_K$. One checks that (1), (3) and (4) hold from the shape of $\phi_{\eta}$ in \eqref{eqn:generic-pt-rational}.

To show (2), note that this map is the composite $\P^r_K \times \square_K \to \P^r_K \times \square_K \to \P^r_K$, where the first one is 
$\left((X_1; \cdots ; X_r; X_0), t\right) \mapsto 
\left((X_1 + utX_0; \cdots ; X_r + utX_0; X_0), t \right)$ 
and the second is the projection to $\P^r_K$ (which is smooth). Since the first map is an isomorphism, 
it follows that $\phi_{\eta}$ is smooth on $\P^r_K \times \square_K$.
\end{proof}

\begin{remk}\label{remk:eqn:generic-pt-flat}
The unique extension of $\phi_{\eta}$ to $U_{+}$ is not a flat morphism even though it is smooth on $\P^r_K \times \square_K$. If we set $V_i = \{(X_1; \cdots ; X_r; X_0)| X_i \neq 0\} \inj \P^r_K$ for $i = 1, \cdots , r$, then the map $\phi^{-1}_{\eta}(V_i) \to V_i$ is not flat because $\A^r_K \times \{0\}$ lies in one fiber but all other fibers have strictly smaller dimensions.
\end{remk} 

Our idea is to use the rational map 
$\phi_{\eta}: \P^r_K \times \ov{\square}_K \dashrightarrow  \P^r_K$
to generate a homotopy between an arbitrary admissible cycle
in $z^q(\P^r_k|H_{\infty}, \bullet)$ and a cycle in 
$z^q_{\sW,e} (\P^r_k|H_{\infty}, \bullet)$. In order to do so, we need to
extend $\phi_{\eta}$ to an honest morphism of schemes. We achieve this 
in the following results via a sequence of blow-ups.

\begin{lem}\label{lem:Blow-up}
Let $\pi: \Gamma \to \P^r_K \times \ov{\square}_K$ be the blow-up of $\P^r_K \times \ov{\square}_K$ along the closed subscheme $\mathcal{Y} = H_{\infty} \times \{\infty\}$. Then, there exists a closed point $P_{\infty} \in \pi^{-1}(\sY)$ and 
a regular 
map $\ov{\phi}_{\eta}: \Gamma_{+}:= \Gamma \setminus \{P_{\infty}\} \to \P^r_K$ 
such that  $\pi: \Gamma_+ \to \mathbb{P}^r _K \times \ov{\square} _K$ is 
surjective, and the diagram
\begin{equation}\label{eqn:Blow-up-0}
\xymatrix@C1pc{
\pi^{-1}(U_{+}) \ar@{^{(}->}[r]^>>>>{\ov{j}} \ar[d]_{\simeq} & \Gamma_{+} 
\ar@{->>}[d]_{\pi} 
\ar[dr]^{\ov{\phi}_{\eta}} & \\
U_{+} \ar@{^{(}->} [r]^>>>>{j} \ar@/_1.5pc/[rr] & \P^r_K \times 
\ov{\square}_K \ar@{.>}[r]^{\ \ \ \phi_{\eta}} & \P^r_K}
\end{equation}
commutes.
\end{lem}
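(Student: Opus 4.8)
The plan is to resolve the rational map $\phi_{\eta}$ in \eqref{eqn:generic-pt-rational} explicitly by a single blow-up and then identify the one point that must be removed. First I would pass to affine charts. The indeterminacy locus of $\phi_{\eta}$ is exactly the complement $\sY = H_{\infty}\times\{\infty\}$ of $U_+$, as recorded in \lemref{lem:generic-pt}(1); in the chart where $X_0 = 0$ defines $H_\infty$ and $T_1 = 0$ defines $\{\infty\}$, the map is given (in suitable coordinates) by sending the point to a ratio of the homogeneous forms $T_1 X_i + uT_0 X_0$ and $T_1 X_0$, all of which vanish on $\sY$. Blowing up $\Gamma \to \P^r_K\times\ov\square_K$ along $\sY$ introduces an exceptional divisor over $\sY$; on the blow-up the pulled-back forms $\pi^*(T_1X_i + uT_0X_0)$ and $\pi^*(T_1X_0)$ acquire a common factor (a local equation of the exceptional divisor, since $\sY$ has codimension $2$), and dividing it out gives a morphism $\ov\phi_\eta$ on the complement of the locus where the resulting forms still simultaneously vanish. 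I would compute, in the finitely many affine charts of the blow-up, that this residual base locus is a single closed point $P_\infty$ lying over $\sY$ — concretely, the point of the exceptional divisor corresponding to the direction in which all the transforms degenerate, which after the calculation one sees is the point $[0:\cdots:0:1]$-type point where the $i$-th coordinate directions all die.

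Second, I would set $\Gamma_+ := \Gamma\setminus\{P_\infty\}$ and check the three remaining assertions. Surjectivity of $\pi\colon \Gamma_+\to\P^r_K\times\ov\square_K$ is immediate: blow-ups are surjective, and removing the single point $P_\infty$ cannot kill a whole fiber since the fiber of $\pi$ over a point of $\sY$ is the projectivized normal cone, which has positive dimension (here $\sY$ is a divisor in $H_\infty$, so the fibers of the exceptional divisor over $\sY$ are $\P^1$'s), hence still hits every point of the base after deleting $P_\infty$. The commutativity of \eqref{eqn:Blow-up-0} is a formal consequence of how $\ov\phi_\eta$ was constructed: over $\pi^{-1}(U_+)$ the blow-up is an isomorphism (since $\sY\cap U_+=\emptyset$), the common exceptional factor is a unit there, and so $\ov\phi_\eta\circ\ov j$ agrees with $\phi_\eta|_{U_+}$; by density this propagates to the asserted commuting of the outer triangle with the rational map $\phi_\eta$. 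Finally $\ov\phi_\eta$ is regular on all of $\Gamma_+$ by construction, since we removed precisely the residual base locus.

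The main obstacle I expect is the explicit chart computation locating $P_\infty$ and verifying it is a \emph{single reduced point} rather than a larger subscheme. One has to be careful that blowing up along the (possibly non-reduced, since it is an intersection of two divisors) scheme $\sY = H_\infty\times\{\infty\}$ behaves as expected — i.e., that $\sY$ is a regularly embedded codimension-$2$ subscheme of $\P^r_K\times\ov\square_K$, so that the exceptional divisor is a $\P^1$-bundle over $\sY$ and the pulled-back ideal is principal. Granting that, one works in the two standard charts of the blow-up (corresponding to the two generators $X_0$ and the local coordinate at $\infty$ of the ideal of $\sY$); in one chart the map extends with no further base points, while in the other a single point survives, and checking that its local ring is regular of the right dimension (so the point is genuinely a closed point of $\Gamma$, not an embedded phenomenon) is the delicate bookkeeping. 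I would also double-check compatibility of the construction with the coordinate choices fixed just before \lemref{lem:generic-pt}, since the special form $u=\prod x_i$ of the multiplier is what makes the later modulus estimates work, but for this lemma itself $u$ plays no essential role and could be replaced by any nonzero rational function.
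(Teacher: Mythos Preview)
Your plan is correct and is essentially the same explicit chart computation the paper carries out: write the blow-up in local coordinates, lift the homogeneous forms defining $\phi_\eta$, divide by the exceptional factor, and solve for the residual base locus. One small correction to your expectations: when you do the computation you will find that the surviving point $P_\infty = \bigl((1;\cdots;1;0),(1;0),(1;-u)\bigr)$ has both blow-up coordinates $Y_{0,i}=-u$ and $Y_{1,i}=1$ nonzero, so it is visible in \emph{both} standard charts of the $\P^1$-factor rather than only one, and its description does involve $u$ explicitly (though, as you say, any unit would do for this lemma).
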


\begin{proof}
Let $U_i \subsetneq \P^r_K$ be the open set $\{X_i \neq 0\}$ for 
$0 \le i \le r$. One checks by a direct local calculation the blow-up $\Gamma$ 
has the following description. Over $U_i$, it is defined by 
\begin{equation}\label{eqn:Blow-up-0-0}
\pi^{-1}(U_i) = \{\left((X_1; \cdots ; X_r; X_0), (T_0; T_{1}),
(Y_{1,i}; Y_{0,i})\right) \in U_i \times \ov{\square}_K \times \P^1_K| 
X_0T_0Y_{0,i} = X_iT_1Y_{1,i}\}
\end{equation}
and these blow-ups glue along their intersections
to make up $\Gamma$ via the change of coordinate
${Y_{0,i}}/{Y_{0,j}} = ({X_i}/{X_j})({Y_{1,i}}/{Y_{1,j}})$ over
$U_i \cap U_j$.
The blow-up map
$\pi: \pi^{-1}(U_i) \to U_i \times \ov{\square}_K$ is the composite 
$\pi^{-1}(U_i) \inj U_i \times \ov{\square}_K \times \P^1_K \to U_i \times 
\ov{\square}_K$.

We now define a rational map 
$\ov{\phi}^i_{\eta}: \pi^{-1}(U_i) \dashrightarrow  \P^r_K$ by
\begin{equation}\label{eqn:Blow-up-1}
\ov{\phi}_{\eta}\left((X_1; \cdots ; X_r; X_0), (T_0; T_{1}), 
(Y_{1,i}; Y_{0,i})\right) = \hspace*{5cm} 
\end{equation}
\[
\hspace*{5cm} 
\left(Y_{0,i}X_1 + uX_iY_{1,i}; \cdots ; Y_{0,i}X_r + uX_iY_{1,i}; 
Y_{0,i}X_0\right).
\]

The blow-up $\Gamma$ is glued along $U_i \cap U_j$ via the automorphism
$\psi_{i,j}: \pi^{-1}(U_i \cap U_j) \xrightarrow{\simeq} 
\pi^{-1}(U_i \cap U_j)$:
\[
\psi_{i,j}\left((X_1; \cdots ; X_r; X_0), (T_0; T_{1}), 
(Y_{1,i}; Y_{0,i})\right) = \hspace*{5cm} 
\]
\[
\hspace*{5cm} \left((X_1; \cdots ; X_r; X_0), (T_0; T_{1}), 
(X_i X^{-1}_jY_{1,i}; X_j X^{-1}_iY_{0,i})\right).
\]

It is clear from this isomorphism that $\psi_{i,j}(Y_{l,i} \neq 0) =
(Y_{l,j} \neq 0)$ for $l = 0,1$. Over $(Y_{0,i} \neq 0)$, we can let
$Y_{0,i} = Y_{0,j} = 1, Y_{1,i} = y_i$ and $Y_{1,j} = y_j$. Over this open
subset of $\pi^{-1}(U_i \cap U_j)$, we
get
\begin{equation}\label{eqn:Blow-up-1-0}
\ov{\phi}^j_{\eta} \circ \psi_{i,j} 
\left((X_1; \cdots ; X_r; X_0), (T_0; T_{1}), y_i\right)  = \hspace*{5cm} 
\end{equation}
\[
\begin{array}{lll}
& = & \ov{\phi}^j_{\eta}
\left((X_1; \cdots ; X_r; X_0), (T_0; T_{1}), X_i X^{-1}_j y_i\right) \\
& = & \left(X_ 1 + uX_j X_i X^{-1}_j y_i; \cdots ;
X_r + u X_j X_i X^{-1}_j y_i; X_0\right) \\
& = & \left( X_1 + u X_i y_i; \cdots ; X_r + u X_i y_i; X_0\right) \\
& = & \ov{\phi}^i_{\eta}
\left((X_1; \cdots ; X_r; X_0), (T_0; T_{1}), y_i\right).
\end{array}
\]

Over the intersection of $\pi^{-1}(U_i \cap U_j)$ with the open
subset $(Y_{1,i} \neq 0)$, we have
\begin{equation}\label{eqn:Blow-up-1-1}
\ov{\phi}^j_{\eta} \circ \psi_{i,j} 
\left((X_1; \cdots ; X_r; X_0), (T_0; T_{1}), y_i\right)  = 
\hspace*{5cm} 
\end{equation}
\[
\begin{array}{lll}
& = & \ov{\phi}^j_{\eta}
\left((X_1; \cdots ; X_r; X_0), (T_0; T_{1}), X_j X^{-1}_i y_i\right) \\
& = & \left(X_j X^{-1}_i X_1 y_i + uX_j; \cdots X_j X^{-1}_i X_r y_i + uX_j;
X^{-1}_i X_j X_0 y_i\right) \\
& = & \left(X_1 X_j y_i + uX_i X_j; \cdots ; X_r X_j y_i + u X_i X_j;
X_j X_0y_i\right) \\
& = & \left( X_1 y_i + u X_i; \cdots ; X_r y_i + u X_i; X_0 y_i\right) \\
& = & \ov{\phi}^i_{\eta}  
\left((X_1; \cdots ; X_r; X_0), (T_0; T_{1}), y_i\right).
\end{array}
\]

It follows from ~\eqref{eqn:Blow-up-1-0} and  ~\eqref{eqn:Blow-up-1-1} that
$\ov{\phi}^j_{\eta}$'s glue together to yield a rational
map $\ov{\phi}_{\eta}: \Gamma \dashrightarrow \P^r_K$ such that
$\ov{\phi}_{\eta}|_{\pi^{-1}(U_i)} = \ov{\phi}^j_{\eta}$ for $0 \le i \le r$.

We next show the commutativity of ~\eqref{eqn:Blow-up-0}.
The left square of \eqref{eqn:Blow-up-0} commutes by construction. 
We thus have to show that 
$\ov{\phi}_{\eta} \circ \ov{j} = \phi_{\eta} \circ \pi$, i.e., 
the trapezoid in \eqref{eqn:Blow-up-0} commutes. It suffices to show
this over each open subset $(U_i \times \ov{\square}_K) \cap U_{+}$.
If $P= \left((X_1; \cdots; X_r ; X_0 ), (T_0;T_1), (Y_{1,i};Y_{0,i})\right)
\in \pi^{-1} (U_+)$, we have $\pi (P) = \left((X_1; \cdots; X_r;X_0), 
(T_0;T_1)\right)$ such that either $T_1 \not = 0$ or $X_0 \not = 0$. 

Suppose first that $T_1 \not = 0$. Then, we can take $T_1 = 1$ and $T_0 = t$. 
In this case, we must have $Y_{0,i} \not = 0$ so that we can assume 
$Y_{0,i} =1$. Thus, the equation $X_0 T_0 Y_{0,i} = X_iT_1Y_{1,i}$ becomes 
$Y_{1,i} = t X_0 X^{-1}_i$.
This yields $\ov{\phi}^i_{\eta} \circ \ov{j}(P)
= \left(X_1 + utX_0; \cdots ; X_r + utX_0; X_0\right)$ by 
~\eqref{eqn:Blow-up-1} and $\phi_{\eta} \circ \pi (P) =
\left(X_1 + utX_0; \cdots ; X_r + utX_0; X_0\right)$ by 
~\eqref{eqn:generic-pt-rational}. 

Suppose next that $X_0 \neq 0$. Since the case $T_1 \not =0$ was already 
considered, we may suppose $T_0 \not = 0$. Thus, we may take $T_0 = 1$ and 
$T_1 = t$. In this case, we must have $Y_{1,i} \not = 0$, so that we may take 
$Y_{1,i} = 1$. Thus, the equation $X_0 T_0 Y_{0,i} = X_iT_1 Y_{1,i}$ becomes 
$Y_{0,i} = tX_iX^{-1}_0$. This yields $\ov{\phi}^i_{\eta} \circ \ov{j}(P) = 
\left(tX_1X_i+ uX_0X_i; \cdots ; tX_rX_i+ uX_0X_i; tX_iX_0\right)
= \left(tX_1+ X_0; \cdots ; tX_r + X_0; tX_0\right)$ by ~\eqref{eqn:Blow-up-1}.
On the other hand, $\phi_{\eta} \circ \pi (P) 
= \left(tX_1+ uX_0; \cdots ; tX_r + X_0; tX_0\right)$ by 
~\eqref{eqn:generic-pt-rational}.
We have thus shown that $\ov{\phi}_{\eta} \circ \ov{j}(P) =
\phi_{\eta} \circ \pi (P)$ for $P \in \pi^{-1}(U_+)$. 

We now show that $\ov{\phi}_{\eta}$ is regular on $\Gamma \setminus 
\{P_{\infty}\}$, where $P_{\infty} \in 
\left(\stackrel{r}{\underset{i = 1}\cap} \pi^{-1}(U_i)\right)$
is the closed point 
$\left((1;\cdots; 1;0), (1;0),(1;-u)\right)$ in the coordinates of
$\pi^{-1}(U_i)$.
Let $Q = \left((X_1; \cdots ; X_r; X_0), (T_0; T_{1}), (Y_{1,i}; Y_{0,i})\right) 
\in \pi^{-1}(U_i)$ be a point so that $X_0T_0Y_{0,i} = X_iT_1Y_{1,i}$.
Then $\ov{\phi}_{\eta}(Q)$ is not defined if and only if all its coordinates 
are zero, i.e.,
\begin{equation}\label{eqn:Blow-up-2}
Y_{0,i}X_j + uX_iY_{1,i} = 0, \ \ \mbox{ for all } \ 1 \le j \le r, \ \ 
\mbox{and} \ \ Y_{0,i}X_0 = 0.
\end{equation}

If $Y_{0,i} = 0$, then $uX_i Y_{1,i} = 0$ for $1 \le i \le r$. 
But $u \in K^{\times}$ and $Q \in \pi^{-1}(U_i)$ imply that $Y_{1,i} =0$, 
which can not happen since $(Y_{1,i};Y_{0,i}) \in \P^1_K$. 
So, $Y_{0,i} \not = 0$ and we must have $X_0 = 0$. 
Since $X_i \neq 0$, we can assume $X_i = 1$. Since $X_0 = 0$, we also have 
$T_1Y_{1,i} = 0$, so that either $Y_{1,i} = 0$ or $T_1 = 0$.
If $Y_{1,i} = 0$, then it follows from ~\eqref{eqn:Blow-up-2} 
that $Y_{0,i} = -u Y_{1,i} = 0$, which again 
is absurd because $(Y_{1,i};Y_{0,i}) \in \P^1_K$. 
So, $Y_{1,i} \not = 0$, and $T_1 = 0$. We may assume $Y_{1,i} = 1$.
 Combining this with \eqref{eqn:Blow-up-2}, we thus have
\begin{equation}\label{eqn:Blow-up-3}
Y_{0,i} = -u, \ \ Y_{0,i}X_j + u  = 0 \ \  \ \mbox{ for all }\ 1 \le j \neq i 
\le r \ \ 
\mbox{and} \ \ X_0 = T_1 = 0.
\end{equation}
We conclude that $\ov{\phi}_{\eta}(Q)$ is not defined if and only if
$Q = \left((1; \cdots ; 1; 0), (1;0), (1; -u)\right)$.
This proves the regularity of $\ov{\phi}_{\eta}$ on $\Gamma \setminus 
\{P_{\infty}\}$.
Since $P_{\infty} \in \pi^{-1}(\sY)$ and since each fiber of $\pi$ over
$\sY$ is 1-dimensional, we conclude that the map
$(\Gamma \setminus \{P_{\infty}\}) \to \P^r_K \times \ov{\square}_K$ is 
surjective. This finishes the proof of the lemma.
\end{proof}

\begin{remk}\label{remk:linear-system}
The reader can check that the map $\phi_{\eta}: \P^r_K \times \ov{\square}_K 
\dashrightarrow \P^r_K$ is the one defined by the linear system
generated by the global sections
$S = \{T_1X_i + uT_0X_0\}_{1 \le i \le r} \cup \{T_1X_0\}$
of the line bundle $\sO(1,1)$.
The sheaf of ideals $\sI_{\infty}$  on 
$\P^r_K \times \ov{\square}_K$ defining $\sY$ is
generated by $\{X_iT_1, X_0T_0| 0 \le i \le r\}$.
Moreover,
$\ov{\phi}_{\eta}: \Gamma \dashrightarrow \P^r_K$ 
is the rational map defined by the linear system
generated by the global sections $\pi^*(S)$ of the line bundle 
$\pi^{*}\sI_{\infty}$.
\end{remk}

Let $\pi:\Gamma \to \P^r_K \times \ov{\square}_K$ be the blow-up map as in 
\lemref{lem:Blow-up} and let $E = \pi^*(\sY)$ denote the exceptional
divisor for this blow-up. Note that the map $\pi: E \to \sY \simeq H_{\infty}$
is the $\P^1_K$-bundle associated to the vector bundle $\sO(1) \oplus \sO$.  

Since $H_{\infty} \times \ov{\square}_K$ and 
$\P^r_K \times \{\infty\}$ are smooth schemes, and $\mathcal{Y}$ is a smooth 
divisor inside these schemes, note that 
${\rm Bl}_{\mathcal{Y}}(H_{\infty} \times \ov{\square}_K) \to 
H_{\infty} \times \ov{\square}_K$ and 
${\rm Bl}_{\mathcal{Y}}(\P^r_K \times \{\infty\}) \to 
\P^r_K \times \{\infty\}$ are isomorphisms.

\begin{lem}\label{lem:Blow-up-trivial}
Let $\pi: \Gamma \to \P^r_K \times \ov{\square}_K$ be as in 
\lemref{lem:Blow-up}. Then, we have the following.
\begin{enumerate}
\item ${\rm Bl}_{\mathcal{Y}}(H_{\infty} \times \ov{\square}_K) \cap \{P_{\infty}\} 
= \emptyset = {\rm Bl}_{\mathcal{Y}}(\P^r_K \times \{\infty\}) \cap 
\{P_{\infty}\}$.
\item ${\rm Bl}_{\mathcal{Y}}(H_{\infty} \times \ov{\square}_K) \cap 
{\rm Bl}_{\mathcal{Y}}(\P^r_K \times \{\infty\}) = \emptyset$ inside $\Gamma$.
\item $\pi^*(H_{\infty} \times \ov{\square}_K) = 
(H_{\infty} \times \ov{\square}_K) + E$ and 
$\pi^*(\P^r_K \times \{\infty\}) = (\P^r_K \times \{\infty\}) + E$ in the 
group ${\rm Div}(\Gamma)$ of Weil divisors.
\end{enumerate}
\end{lem}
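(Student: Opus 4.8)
The plan is to read off all three statements from the explicit local charts $\pi^{-1}(U_i)$, $0 \le i \le r$, of $\Gamma$ constructed in the proof of \lemref{lem:Blow-up}. Recall from there that $\sY = H_\infty \times \{\infty\}$ is cut out inside $\P^r_K \times \ov{\square}_K$ by $\{X_0 = T_1 = 0\}$, that $H_\infty \times \ov{\square}_K$ is the smooth divisor $\{X_0 = 0\}$, and that $\P^r_K \times \{\infty\}$ is the smooth divisor $\{T_1 = 0\}$; being pulled back from the two factors, these two divisors meet transversally, with intersection exactly $\sY$. Over $U_0$ the center $\sY$ is disjoint from $U_0 \times \ov{\square}_K$, so $\pi$ is an isomorphism there, $E$ is empty over $U_0$, and all three assertions are trivial over $U_0$; thus everything of substance takes place over $U_1, \dots, U_r$, which is also where $P_\infty$ lies.

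The first step is the local computation of total and strict transforms. On $\pi^{-1}(U_i)$ with $i \ge 1$, which by \eqref{eqn:Blow-up-0-0} is the subscheme of $U_i \times \ov{\square}_K \times \P^1_K$ defined by $X_0 T_0 Y_{0,i} = X_i T_1 Y_{1,i}$ with $X_i \ne 0$, imposing $X_0 = 0$ forces $T_1 Y_{1,i} = 0$, so over $U_i$ we get $\pi^{-1}(H_\infty \times \ov{\square}_K) = \{X_0 = T_1 = 0\} \cup \{X_0 = Y_{1,i} = 0\}$, the first piece being $E$ and the second the strict transform ${\rm Bl}_{\sY}(H_\infty \times \ov{\square}_K)$. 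Likewise, imposing $T_1 = 0$ forces $X_0 Y_{0,i} = 0$ (as $T_0 \ne 0$ on $\{T_1 = 0\}$), so over $U_i$ we get $\pi^{-1}(\P^r_K \times \{\infty\}) = \{X_0 = T_1 = 0\} \cup \{T_1 = Y_{0,i} = 0\}$, with strict transform ${\rm Bl}_{\sY}(\P^r_K \times \{\infty\}) = \{T_1 = Y_{0,i} = 0\}$. For (3), since $H_\infty \times \ov{\square}_K$ and $\P^r_K \times \{\infty\}$ are smooth Cartier divisors containing the smooth center $\sY$, pulling back a local equation of either one factors, in each chart, as a local equation of its strict transform times a local equation of $E$; hence $\pi^*(H_\infty \times \ov{\square}_K) = {\rm Bl}_{\sY}(H_\infty \times \ov{\square}_K) + E$ and $\pi^*(\P^r_K \times \{\infty\}) = {\rm Bl}_{\sY}(\P^r_K \times \{\infty\}) + E$ over each $U_i$, and the $U_i$ cover $\P^r_K$.

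With these charts of the two strict transforms available, (1) and (2) are immediate. In the coordinates of $\pi^{-1}(U_i)$ the point $P_\infty$ has $(Y_{1,i};Y_{0,i}) = (1;-u)$ with $u \in K^\times$, so both $Y_{1,i}$ and $Y_{0,i}$ are nonzero at $P_\infty$; since on $\pi^{-1}(U_i)$ the strict transform ${\rm Bl}_{\sY}(H_\infty \times \ov{\square}_K)$ lies in $\{Y_{1,i} = 0\}$ and ${\rm Bl}_{\sY}(\P^r_K \times \{\infty\})$ lies in $\{Y_{0,i} = 0\}$, the point $P_\infty$ lies on neither, and moreover $P_\infty \notin \pi^{-1}(U_0)$; this proves (1). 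For (2), a point in the intersection of the two strict transforms over $U_i$ would satisfy $Y_{1,i} = Y_{0,i} = 0$, impossible in $\P^1_K$, while over $U_0$ the strict transform of $H_\infty \times \ov{\square}_K$ is empty (as $H_\infty \cap U_0 = \emptyset$); hence the intersection is empty in $\Gamma$.

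I do not anticipate a genuine obstacle: the entire argument is the bookkeeping of the two standard charts $\{Y_{1,i} \ne 0\}$ and $\{Y_{0,i} \ne 0\}$ of the $\P^1_K$-factor appearing in \eqref{eqn:Blow-up-0-0}, together with the single input $-u \ne 0$ already exploited at the end of the proof of \lemref{lem:Blow-up} to locate $P_\infty$. Conceptually, (1) and (2) merely record that blowing up the transverse intersection $\sY$ of the two boundary divisors $H_\infty \times \ov{\square}_K$ and $\P^r_K \times \{\infty\}$ separates their strict transforms from one another --- they meet $E$ along two disjoint sections of the $\P^1_K$-bundle $E \to H_\infty$ --- and pushes them off the single point $P_\infty$ singled out by the chart analysis of \lemref{lem:Blow-up}.
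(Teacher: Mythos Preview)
Your proof is correct and follows essentially the same approach as the paper: both compute the strict transforms in the explicit charts $\pi^{-1}(U_i)$ from \eqref{eqn:Blow-up-0-0}, identify them as $\{Y_{1,i}=0\}$ and $\{Y_{0,i}=0\}$ respectively, and then read off (1) from $(Y_{1,i};Y_{0,i})=(1;-u)$ at $P_\infty$ and (2) from the impossibility of $Y_{1,i}=Y_{0,i}=0$. Your treatment of (3) is slightly more explicit than the paper's ``immediate'', but the underlying computation is the same.
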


\begin{proof}
It suffices to verify each statement of the lemma over an open
subset $\pi^{-1}(U_i)$ with $0 \le i \le r$. On the other hand, 
~\eqref{eqn:Blow-up-0-0} shows that over $U_i$, we have
${\rm Bl}_{\mathcal{Y}}(H_{\infty} \times \ov{\square}_K) =
\{\left((X_1; \cdots ; X_r;0), (T_0;T_1), (Y_{1,i};Y_{0,i})\right) \in 
\P^r_K \times \ov{\square}_K \times \P^1_K|Y_{1,i} = 0\} = 
H_{\infty} \times \ov{\square}_K \times \{0\}$.
Similarly, we have 
${\rm Bl}_{\mathcal{Y}}(\P^r_K \times \{\infty\}) = 
\{\left((X_1; \cdots ; X_r;X_0), (1;0),(Y_{1,i};Y_{0,i})\right)$ \\
$\in \P^r_K 
\times \ov{\square}_K \times \P^1_K|Y_{0,i} = 0\} = 
\P^r_K \times \{\infty\} \times \{\infty\}$.
Since $P_{\infty}$ does not map to $\{0,\infty\} \subset \mathbb{P}^1 _K$ under 
the projection $\pi^{-1}(U_i) \to \P^1_K$ for
any $0 \le i \le r$, we get (1). 
The parts (2) and (3) of the lemma are immediate.
\end{proof}

Let $\Gamma_1 \inj \Gamma_{+} \times \P^r_K$ denote the graph of $\ov{\phi}_{\eta}$ and let $\ov{\Gamma}_1 \inj \Gamma \times \P^r_K$ be its closure. Let $\pi^N :\ov{\Gamma}_1 ^N \to  \ov{\Gamma}_1 \hookrightarrow \Gamma \times \mathbb{P}^r _K$ be the normalization composed with the inclusion, and let $\pi_1:= pr_1 \circ \pi^N$, $\pi_2:= pr_2 \circ \pi^N$, where $pr_1, pr_2$ are the projections from $\Gamma \times \mathbb{P}_K ^r$ to $\Gamma$ and $\mathbb{P}_K ^r$, respectively. Here, $\pi^N$ is finite and $\pi_1$ is projective with $\pi^{-1}_1(\Gamma_{+}) \xrightarrow{\simeq} \Gamma_{+}$ such that $\pi_2|_{\Gamma_{+}} = \ov{\phi}_{\eta}$.

Since $\pi_1$ is a birational projective morphism and 
$\Gamma$ is smooth, it follows from 
\cite[Theorem II-7.17, p.166, Exercise II-7.11(c), p.171]{Hart} 
that there is a closed subscheme $Z \inj \Gamma$ such that 
$Z_{\rm red} = \{P_{\infty}\}$ and $\ov{\Gamma}^N_1 = {\rm Bl}_{Z}(\Gamma)$. 
Let $F \inj \ov{\Gamma}^N_1$ denote the exceptional divisor for this blow-up 
so that $F_{\rm red} = \pi^{-1}_1(P_{\infty})$. Let $E_1 \inj \ov{\Gamma}^N_1$ 
denote the strict transform of $E$ 
under $\pi_1$ so that $\pi^*_1(E) = E_1 + F$.  

Letting $\delta := \pi \circ \pi_1: \ov{\Gamma}^N_1 \to 
\P^r_K \times \ov{\square}_K$ and $E' := \pi^*_1(E) = E_1 + F$, 
a combination of Lemmas~\ref{lem:Blow-up}, ~\ref{lem:Blow-up-trivial} and the 
above construction proves the following.

\begin{lem}\label{lem:Blow-up-Final}
There exists a commutative diagram
\begin{equation}\label{eqn:Blow-up-more}
\xymatrix@C1pc{
\delta^{-1}(U_{+}) \ar@{^{(}->}[r]^>>>>{j_1} \ar[d]_{\simeq} & \ov{\Gamma}^N_1 
\ar@{->>}[d]_{\delta} 
\ar[dr]^{\pi_2} & \\
U_{+} \ar@{^{(}->} [r]^>>>>{j} \ar@/_1.5pc/[rr] & \P^r_K \times 
\ov{\square}_K \ar@{.>}[r]^{\ \ \phi_{\eta}} & \P^r_K}
\end{equation}
such that $\delta$ is a blow-up, and in the group ${\rm Div}(\ov{\Gamma}^N_1)$ of Weil divisors, we have:
\begin{equation}\label{eqn:generic-pt-2}
\delta^*(H_{\infty} \times \ov{\square}_K) = (H_{\infty} \times \ov{\square}_K) 
+ E' \ \mbox{and}
\ \ \delta^*(\P^r_K \times \{\infty\}) = (\P^r_K \times \{\infty\}) + E'.
\end{equation}
\end{lem}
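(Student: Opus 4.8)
The plan is to combine the three preceding lemmas essentially formally, so the ``proof'' is really a matter of assembling and bookkeeping. First I would unwind the construction: we have the blow-up $\pi:\Gamma\to\P^r_K\times\ov{\square}_K$ along $\sY$ with exceptional divisor $E=\pi^*(\sY)$ from \lemref{lem:Blow-up}, then we blow up $\Gamma$ along the subscheme $Z$ supported at the single point $P_\infty$ to obtain $\pi_1:\ov{\Gamma}^N_1={\rm Bl}_Z(\Gamma)\to\Gamma$, and we set $\delta=\pi\circ\pi_1$. Since a composite of blow-ups is a blow-up (or at least a projective birational morphism with smooth source suffices for our purposes), $\delta$ is a blow-up. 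The commutative diagram \eqref{eqn:Blow-up-more} is obtained by pasting the diagram \eqref{eqn:Blow-up-0} of \lemref{lem:Blow-up} with the identification $\pi^{-1}_1(\Gamma_+)\xrightarrow{\simeq}\Gamma_+$ and $\pi_2|_{\Gamma_+}=\ov{\phi}_\eta$ coming from the graph construction; over $U_+$ everything is an isomorphism by \lemref{lem:generic-pt}(1) and the corresponding statement in \lemref{lem:Blow-up}, so $\delta^{-1}(U_+)\xrightarrow{\simeq}U_+$, and the trapezoid commutes because $\pi_2\circ j_1 = \ov{\phi}_\eta\circ\ov{j} = \phi_\eta\circ\pi\circ\pi_1 = \phi_\eta\circ\delta$ on that open set.

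Next I would verify the divisor identities \eqref{eqn:generic-pt-2}. By \lemref{lem:Blow-up-trivial}(3) we have, in ${\rm Div}(\Gamma)$,
\[
\pi^*(H_\infty\times\ov{\square}_K) = (H_\infty\times\ov{\square}_K) + E,
\qquad
\pi^*(\P^r_K\times\{\infty\}) = (\P^r_K\times\{\infty\}) + E,
\]
where I abuse notation and write $H_\infty\times\ov{\square}_K$ and $\P^r_K\times\{\infty\}$ also for their strict transforms under $\pi$ (which equal ${\rm Bl}_{\sY}$ of each by the remark preceding \lemref{lem:Blow-up-trivial}). Now pull back along $\pi_1$. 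Since the center $Z$ of the blow-up $\pi_1$ is supported at $P_\infty$, and by \lemref{lem:Blow-up-trivial}(1) the point $P_\infty$ lies on neither ${\rm Bl}_{\sY}(H_\infty\times\ov{\square}_K)$ nor ${\rm Bl}_{\sY}(\P^r_K\times\{\infty\})$, the morphism $\pi_1$ is an isomorphism near these two divisors; hence their pullbacks under $\pi_1$ are again their strict transforms and no exceptional contribution appears from them. On the other hand $\pi^*_1(E)=E_1+F=E'$ by definition of $E_1$, $F$, and $E'$. Composing, $\delta^* = \pi^*_1\circ\pi^*$ gives
\[
\delta^*(H_\infty\times\ov{\square}_K) = \pi^*_1\big((H_\infty\times\ov{\square}_K) + E\big) = (H_\infty\times\ov{\square}_K) + E',
\]
and identically with $\P^r_K\times\{\infty\}$ in place of $H_\infty\times\ov{\square}_K$. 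This is exactly \eqref{eqn:generic-pt-2}.

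The surjectivity of $\delta$ follows from the surjectivity of $\pi:\Gamma_+\to\P^r_K\times\ov{\square}_K$ asserted in \lemref{lem:Blow-up} together with the fact that $\pi_1:\ov{\Gamma}^N_1\to\Gamma$ is surjective (being a blow-up), and that $\pi_2|_{\delta^{-1}(U_+)}$ agrees with $\ov{\phi}_\eta$ through the isomorphism $\delta^{-1}(U_+)\simeq U_+$, so the trapezoid really does factor $\phi_\eta\circ\delta$. I would write all of this out in a few lines, citing \lemref{lem:Blow-up}, \lemref{lem:Blow-up-trivial}, and the graph construction for $\ov{\Gamma}^N_1$.

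The only genuinely delicate point — and the place I would be most careful — is the claim that $\pi_1$ introduces no exceptional correction to the two divisors $H_\infty\times\ov{\square}_K$ and $\P^r_K\times\{\infty\}$ when we pull back. This rests entirely on \lemref{lem:Blow-up-trivial}(1), namely that $P_\infty$ avoids both ${\rm Bl}_{\sY}$-transforms; since $Z_{\rm red}=\{P_\infty\}$ and blowing up a subscheme supported away from a divisor does not change that divisor, the computation is clean, but one must make sure that the strict transforms under $\pi$ and under $\delta$ are being identified compatibly (this is why the notation in \lemref{lem:Blow-up-trivial} already writes the transforms as products). Everything else is formal composition of blow-up data and diagram chasing, so I would keep the exposition short and point to the cited lemmas for the substantive content.
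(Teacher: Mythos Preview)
Your proposal is correct and follows exactly the approach of the paper: the paper's proof of this lemma is literally the one-sentence assertion that it follows from Lemmas~\ref{lem:Blow-up}, \ref{lem:Blow-up-trivial}, and the graph/normalization construction immediately preceding the statement, and you have simply unpacked that sentence in the natural way. Your identification of the only delicate point---that $P_\infty$ avoids the two strict transforms (\lemref{lem:Blow-up-trivial}(1)), so $\pi_1$ contributes no exceptional correction to them---is precisely the content the paper is relying on.
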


For any map $f: X \to X'$ of $K$-schemes, let $f_n$ denote the map $f \times {\rm  Id}_{\ov{\square}^n_K} :X \times \ov{\square}^n_K \to X' \times \ov{\square}^n_K$. We now show how the rational map $\phi_{\eta}:\P^r_K \times \ov{\square}_K 
\dashrightarrow \P^r_K$ eventually leads to the desired homotopy.

\begin{prop}\label{prop:moving-mod}
Let $n \geq 1$ be an integer. Let $V \inj \P^r_K \times {\square}^{n}_K$ be an 
integral closed subscheme. Assume that $V$ has modulus $H_{\infty}$ relative to 
$F^{\infty}_n$. Let $\phi_{\eta}: \A^r_K \times \square_K \to \P^r_K$ be the map 
as in \lemref{lem:generic-pt}. 
Then, the closure of $\phi^{-1} _{\eta, n} (V)$ in $\mathbb{P}^r _K \times 
\square_K ^{n+1}$ is an integral closed subscheme of 
$\P^r_K \times {\square}^{n+1}_K$ which has modulus $H_{\infty}$ relative to 
$F^{\infty}_{n+1}$.
\end{prop}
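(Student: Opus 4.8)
The plan is to reduce the modulus condition on the closure of $\phi^{-1}_{\eta,n}(V)$ to the modulus condition on $V$ by pulling everything back through the blow-up tower of \lemref{lem:Blow-up-Final}. Write $\ov{V} \inj \P^r_K \times \ov\square^n_K$ for the closure of $V$ and let $W$ denote the closure of $\phi^{-1}_{\eta,n}(V)$ in $\P^r_K \times \ov\square^{n+1}_K$; since $\phi_\eta$ on $\A^r_K \times \square_K$ is smooth (\lemref{lem:generic-pt}(2)), $\phi^{-1}_{\eta,n}(V)$ is integral, hence so is $W$. First I would identify $W$ with (the image under $\delta_n$ of) a component of $\delta_n^{-1}(\ov V \times_? \cdots)$; more precisely, consider the diagram of \eqref{eqn:Blow-up-more} crossed with $\ov\square^n_K$, so that we have $\delta_n : \ov\Gamma^N_{1,n} \to \P^r_K \times \ov\square^{n+1}_K$ and $\pi_{2,n} : \ov\Gamma^N_{1,n} \to \P^r_K \times \ov\square^n_K$, and let $\wt W \inj \ov\Gamma^N_{1,n}$ be the strict transform of $W$, equivalently the closure of $(\pi_{2,n})^{-1}(\ov V) \cap \delta_n^{-1}(U_+ \times \ov\square^n_K)$. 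The key point is that on $\wt W^N$ the two pullbacks in \eqref{eqn:generic-pt-2} can be played against each other.

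The main computation is the following chain of inequalities on $\wt W^N$ (or rather on its normalization, composed with the maps to the relevant schemes). By \propref{prop:CL*} the modulus condition for $V$ relative to $F^\infty_n$ gives, on $\ov V^N$,
\[
\nu^*_{\ov V}\bigl(H_\infty \times \ov\square^n_K\bigr) \le \nu^*_{\ov V}\bigl(\P^r_K \times F^\infty_n\bigr).
\]
Pulling back along $\wt W^N \to \ov V^N$ (which factors the composite $\wt W^N \to \ov\Gamma^N_{1,n} \xrightarrow{\pi_{2,n}} \P^r_K \times \ov\square^n_K$, using that $\pi_{2,n}$ restricted over $U_+$ is $\ov\phi_{\eta,n}$ and that $\phi_\eta$ pulls $H_\infty$ back as in \lemref{lem:generic-pt}(4)), one gets
\[
\bigl(H_\infty \times \ov\square^{n+1}_K\bigr) + (\text{correction from }E') \ \le\ \P^r_K \times F^\infty_n
\]
where the correction term is supported on the exceptional locus $E' = E_1 + F$. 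Now I would invoke the second equality of \eqref{eqn:generic-pt-2}, $\delta^*(\P^r_K \times \{\infty\}) = (\P^r_K \times \{\infty\}) + E'$, applied in the last ($(n{+}1)$-st) $\ov\square$-factor: this says exactly that $\delta_n^*(\P^r_K \times F^\infty_{n+1,n+1})$ contains $E'$ (pulled to the last factor) as a summand. Combining, the $E'$-correction is absorbed and one obtains $\delta_n^*$-pullback inequality
\[
\nu^*_W\bigl(H_\infty \times \ov\square^{n+1}_K\bigr) \ \le\ \nu^*_W\bigl(\P^r_K \times F^\infty_{n+1}\bigr)
\]
on $W^N$, which is the modulus condition relative to $F^\infty_{n+1}$. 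The bookkeeping of which exceptional divisor contributes to which coordinate is done via Lemmas~\ref{lem:Blow-up-trivial} and \ref{lem:Blow-up-Final}, together with \lemref{lem:cancel} to transfer inequalities of divisors across the dominant maps $W^N \to \ov V^N$ and $W^N \to \wt W$.

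The step I expect to be the main obstacle is the careful comparison of the \emph{multiplicities} along the exceptional divisor $E' = E_1 + F$: the identity $\ov\phi_\eta^{-1}(H_\infty) = (\A^r_K \times \{\infty\}) + (H_\infty \times \square_K)$ of \lemref{lem:generic-pt}(4) holds on $U_+$, but one must understand precisely how $\ov\phi_{\eta,n}^*(H_\infty)$ extends across $E_1$ and $F$ on the blow-up $\ov\Gamma^N_{1,n}$, and then check that the multiplicity of $E'$ in $\delta_n^*(H_\infty \times \ov\square^{n+1}_K)$ minus that in $\ov\phi_{\eta,n}^*$-pullback of $H_\infty$ is bounded above by the multiplicity of $E'$ in $\delta_n^*(\P^r_K \times F^\infty_{n+1})$ — i.e. that the "$E'$ bookkeeping" really does cancel rather than producing an obstruction. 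This is where \remref{remk:linear-system} (describing $\ov\phi_\eta$ via the linear system $\pi^*(S)$ on $\pi^*\sI_\infty$) and the explicit local equations \eqref{eqn:Blow-up-0-0}, \eqref{eqn:Blow-up-1} are essential: one does the estimate on each chart $\pi^{-1}(U_i) \times \ov\square^n_K$ and glues. Everything else — integrality, properness of the relevant intersections, and the normalization-functoriality of pullbacks of Cartier divisors — is routine given the earlier lemmas.
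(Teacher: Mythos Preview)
Your proposal is correct and follows the same route as the paper's proof. The obstacle you anticipate is simpler than you fear: the paper uses only the \emph{inequality} $\pi_2^*(H_\infty) \ge (\P^r_K \times \{\infty\}) + (H_\infty \times \square_K)$ on $\ov\Gamma^N_1$ (immediate from \lemref{lem:generic-pt}(4) and irreducibility of the two summands), discarding whatever $E'$-contribution there may be, and then the $E'$-bookkeeping reduces to the observation that the \emph{same} $E'$ appears in both identities of \eqref{eqn:generic-pt-2}, so the exceptional corrections on the two sides of the final inequality match exactly and no chart-by-chart multiplicity computation is needed.
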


\begin{proof}We use notations of the paragraph just before Lemma 
\ref{lem:Blow-up-Final} and set 
$E'_{n} = E' \times \ov{\square}^n_K \inj \ov{\Gamma}^N_{1} 
\times  \ov{\square}^n_K$.
 
Let $\ov{V} \inj \P^r_K \times \ov{\square}^n_K$ denote the closure of $V$ and let $\nu_V: \ov{V}^N \to \P^r_K \times \ov{\square}^n_K$ denote the induced map from the normalization of $\ov{V}$. By the modulus condition, we have
\begin{equation}\label{eqn:generic-pt-2-0}
\nu^*_V(\P^r_K \times F^{\infty}_n) \ge \nu^*_V(H_{\infty} \times \ov{\square}^n_K) \ \mbox{in} \ \ {\rm Div}(\ov{V}^N). 
\end{equation}

The condition ~\eqref{eqn:generic-pt-2-0} implies that $V \cap (H_{\infty} \times \square^n_K) = \emptyset$. Set $V'= \phi^{-1}_{\eta,n}(V)$. Since $\phi_{\eta, n}$ is smooth on $\phi^{-1}_{\eta,n}(\A^r_K \times \square^n_K)$ by \lemref{lem:generic-pt}, it follows that $V'$ is an integral closed subscheme of $U_{+} \times \ov{\square}^n_K$ with $\dim_K(V') = \dim_K(V) +1$. Let $\ov{V}' \inj \P^r_K \times \ov{\square}^{n+1}_K$ be the Zariski closure of $V'$, and let $\nu_{V'}: {\ov{V}'}^N \to \ov{V}' \inj \P^r_K \times \ov{\square}^{n+1}_K$ be the induced map from the normalization of $\ov{V}'$. 
Let $W \inj \ov{\Gamma}^N_1 \times \ov{\square}^{n}_K$ be the strict transform of $\ov{V}'$. It follows from \lemref{lem:Blow-up} that $\pi_{2,n}(W \cap \delta^{-1}_n(U_{+} \times {\square}^{n}_k)) = V$. Since $\pi_{2,n}$ is projective, we must have $\pi_{2,n}(W) = \ov{V}$. This yields a commutative diagram
\begin{equation}\label{eqn:generic-pt-3}
\xymatrix@C1pc{
W^N \ar[dr]^{\nu_W} \ar[rr]^{f} \ar[dd]_{g} & & \ov{V}^N \ar[d]^{\nu_V} \\
& \ov{\Gamma}^N_{1} \times \ov{\square}^{n}_K \ar[r]^{\pi_{2,n}} \ar[d]^{\delta_{n}} & \P^r_K \times \ov{\square}^{n}_K \\
{\ov{V}'}^N \ar[r]_<<<{\nu_{V'}} & \P^r_K \times \ov{\square}^{n+1}_K, & }
\end{equation}
where $\nu_W$ is the normalization of $W$ composed with its inclusion into $\ov{\Gamma}_1 ^N \times \ov{\square}_K ^n$, and $f$ and $g$ are the maps induced by the universal property of normalization for dominant maps. Since $f$ is a surjective map of integral schemes, the condition \eqref{eqn:generic-pt-2-0} implies that $(\nu_V \circ f)^*(\P^r_K \times F^{\infty}_n) \ge (\nu_V \circ f)^*(H_{\infty} \times \ov{\square}^n_K)$ on $W^N$. In particular, we get $(\pi_{2,n} \circ \nu_W)^*(\P^r_K \times F^{\infty}_n) \ge  (\pi_{2,n} \circ \nu_W)^*(H_{\infty} \times \ov{\square}^n_K)$ on $W^N$. Equivalently,
\begin{equation}\label{eqn:generic-pt-4}
\nu^*_W(\ov{\Gamma}^N_1 \times F^{\infty}_n) \ge \nu^*_W(\pi^*_{2}(H_{\infty}) \times \ov{\square}^n_K).
\end{equation}

Since $(\phi_{\eta}|_{U_{+}})^*(H_{\infty}) = (\A^r_K \times \{\infty\}) +(H_{\infty} \times \square_K)$ by \lemref{lem:generic-pt}, we get ${j}^*_{1,n} \circ \pi^*_{2,n}(H_{\infty} \times \ov{\square}^n_K) = {j}^*_{1,n}(\P^r_K \times F^{\infty}_{n,n+1}) + {j}^*_{1,n}(H_{\infty} \times \ov{\square}^{n+1}_K)$, where $j_1: U_{+} \inj \ov{\Gamma}^N_1$ is the inclusion. Since $\P^r_K \times F^{\infty}_{n,n+1}$ and $H_{\infty} \times \ov{\square}^{n+1}_K$ are irreducible, we get $\pi^*_{2}(H_{\infty}) \times \ov{\square}^n_K \ge (\P^r_K \times F^{\infty}_{n,n+1}) + (H_{\infty} \times \ov{\square}^{n+1}_K)$ on $\ov{\Gamma}^N_{1} \times \ov{\square}^n_K$. Combining this with ~\eqref{eqn:generic-pt-4}, we get
\begin{equation}\label{eqn:generic-pt-4-0}
\nu^*_W(\ov{\Gamma}^N_{1} \times F^{\infty}_n) \ge \nu^*_W(\P^r_K \times F^{\infty}_{n,n+1}) + \nu^*_W(H_{\infty} \times \ov{\square}^{n+1}_K) \ge  \nu^*_W(H_{\infty} \times \ov{\square}^{n+1}_K). 
\end{equation}

This in turn implies that 
\[
\begin{array}{lll}
(\delta_{n} \circ \nu_W)^*(\P^r_K \times F^{\infty}_{n+1}) & = &
(\delta_{n} \circ \nu_W)^*(\P^r_K \times F^{\infty}_n \times \ov{\square}_K) \\
& &
+ (\delta_{n} \circ \nu_W)^*(\P^r_K \times \ov{\square}^n_K \times \{\infty\}) \\
& = & \nu^*_W(\ov{\Gamma}^N_{1} \times F^{\infty}_n)
+ (\delta_{n} \circ \nu_W)^*(\P^r_K \times \ov{\square}^n_K \times \{\infty\}) \\
& \ge & 
\nu^*_W(H_{\infty} \times \ov{\square}^{n+1}_K) + 
(\delta_{n} \circ \nu_W)^*(\P^r_K \times \ov{\square}^n_K \times \{\infty\}) \\
& {=}^{\dagger} & 
\nu^*_W(H_{\infty} \times \ov{\square}^{n+1}_K) + \nu^*_W(E'_{n}) +
\nu^*_W(\P^r_K \times \ov{\square}^n_K \times \{\infty\}) \\
& {=}^{\ddagger} & 
(\delta_{n} \circ \nu_W)^*(H_{\infty} \times \ov{\square}^{n+1}_K)
+ \nu^*_W(\P^r_K \times \ov{\square}^n_K \times \{\infty\})  \\
& \ge & (\delta_{n} \circ \nu_W)^*(H_{\infty} \times \ov{\square}^{n+1}_K), \\
\end{array}
\]
where ${=}^{\dagger}$ and ${=}^{\ddagger}$ follow from \lemref{lem:Blow-up-Final}. Using ~\eqref{eqn:generic-pt-3}, this gives $g^*(\nu^*_{V'}(\P^r_K \times F^{\infty}_{n+1})) \ge g^*(\nu^*_{V'}(H_{\infty} \times \ov{\square}^{n+1}_K))$. Since $g$ is surjective map of integral normal schemes, we 
conclude by \lemref{lem:cancel} that 
$\nu^*_{V'}(\P^r_K \times F^{\infty}_{n+1}) \ge  \nu^*_{V'}(H_{\infty} \times 
\ov{\square}^{n+1}_K)$.
\end{proof}

\begin{thm}\label{thm:moving-PS}
Given an integer $r \ge 1$, let $D \inj \P^r_k$ be a hyperplane. Let $\sW = \{W_1, \cdots , W_s\}$ be a finite collection of locally closed subsets of $\P^r_k$ and let $e: \sW \to \Z_{\ge 0}$ be a set function. Then, the inclusion $z^q_{\sW}(\P^r_k|D, \bullet) \inj z^q_{\sW,e}(\P^r_k|D, \bullet)$ is a quasi-isomorphism. In particular, the inclusion $z^q_{\sW}(\P^r_k|D, \bullet) \inj z^q(\P^r_k|D, \bullet)$ is a quasi-isomorphism.
\end{thm}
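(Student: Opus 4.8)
The plan is to follow Bloch's classical strategy, taking as homotopy the rational map $\phi_{\eta}$ of \secref{sec:PSM} together with its resolution $\ov{\phi}_{\eta}$ on the blow-up $\ov{\Gamma}^N_1$ furnished by Lemmas~\ref{lem:Blow-up}--\ref{lem:Blow-up-Final}. I would begin with three reductions. First, since any two hyperplanes of $\P^r_k$ differ by an element of $\mathrm{PGL}_{r+1}(k)$, which acts compatibly on every complex $z^q_{\sW,e}(\P^r_k|D,\bullet)$ and transforms $\sW$ into another finite family of locally closed subsets, we may assume $D=H_{\infty}$. Second, because $e\mapsto e-1$ strictly lowers $\max_{W}e(W)$ whenever the latter is positive, the inclusion $z^q_{\sW}(\P^r_k|H_{\infty},\bullet)\inj z^q_{\sW,e}(\P^r_k|H_{\infty},\bullet)$ is a finite composite of inclusions of the shape $z^q_{\sW,e'-1}(\P^r_k|H_{\infty},\bullet)\inj z^q_{\sW,e'}(\P^r_k|H_{\infty},\bullet)$, so it suffices to prove each of these is a quasi-isomorphism, i.e. that $z^q_{\sW,e'}(\P^r_k|H_{\infty},\bullet)/z^q_{\sW,e'-1}(\P^r_k|H_{\infty},\bullet)$ is acyclic. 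Third, by \propref{prop:spread} applied to the smooth scheme $\P^r_k$ and the purely transcendental extension $k\subset K=k(\P^r_k)$, the homology of this quotient injects into that of its base change to $K$; hence it is enough to prove acyclicity over $K$, where the point $\eta=(u,\dots,u)\in\P^r_K$ of \secref{sec:PSM} is at our disposal.

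Over $K$, for an integral admissible cycle $V\in\un{z}^q_{\sW_K,e'}(\P^r_K|H_{\infty},n)$ I would set $H_{\eta}(V)$ equal to the Zariski closure of $\phi^{-1}_{\eta,n}(V)$ inside $\P^r_K\times\square^{n+1}_K$, and extend additively to all cycles. Then \propref{prop:moving-mod} gives that $H_{\eta}(V)$ again satisfies the modulus condition, and the smoothness of $\phi_{\eta}$ on $\P^r_K\times\square_K$ from \lemref{lem:generic-pt}(2), together with the face condition on $V$, gives the face condition for $H_{\eta}(V)$ on $\P^r_K\times\square^{n+1}_K$. For the ``$\sW$-excess'' conditions I would apply \corref{cor:Bloch-2} to the translation action of $\mathbb{G}_a^r$ on the chart $\A^r_K$ (an admissible cycle with modulus $H_{\infty}$ is disjoint from $H_{\infty}\times\square^n$, hence lies over $\A^r_K$): the genericity built into $\eta$ forces, for every $W\in\sW_K$ and every face $F\subset\square^{n+1}$, that $H_{\eta}(V)$ meets $W\times F$ with excess at most $e'(W)$ — so that $H_{\eta}$ restricts to a degree-$(+1)$ endomorphism of $z^q_{\sW_K,e'}(\P^r_K|H_{\infty},\bullet)$ which also preserves the subcomplex $z^q_{\sW_K,e'-1}$ — while the restriction of $H_{\eta}(V)$ to the new face $\{y_{n+1}=1\}$, which is the $\eta$-translate of $V$, meets every $W\times F$ with excess at most $e'(W)-1$, i.e. it lies in $z^q_{\sW_K,e'-1}(\P^r_K|H_{\infty},n)$. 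The other new face $\{y_{n+1}=0\}$ returns $V$, and for $i\le n$ the faces $\partial^{\epsilon}_i H_{\eta}(V)$ equal $H_{\eta}(\partial^{\epsilon}_i V)$ because $\phi_{\eta,n}$ is $\phi_{\eta}$ times the identity on the $\square^n$-factors.

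Reassembling the cubical boundary of $H_{\eta}(V)$ from these faces yields, modulo degenerate cycles and up to the customary sign, a chain-homotopy identity $\partial\circ H_{\eta}+H_{\eta}\circ\partial=\id-\tau_{\eta}$ on $z^q_{\sW_K,e'}(\P^r_K|H_{\infty},\bullet)$, where $\tau_{\eta}$ sends $V$ to the $\{y_{n+1}=1\}$-face of $H_{\eta}(V)$; by the previous paragraph $\tau_{\eta}$ factors through $z^q_{\sW_K,e'-1}$. Passing to the quotient complex $z^q_{\sW_K,e'}/z^q_{\sW_K,e'-1}$, the operator $\tau_{\eta}$ becomes zero, so $H_{\eta}$ descends to a contracting chain homotopy; hence that quotient is acyclic over $K$, and so, by the third reduction, it is acyclic over $k$. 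This shows $z^q_{\sW}(\P^r_k|D,\bullet)\inj z^q_{\sW,e}(\P^r_k|D,\bullet)$ is a quasi-isomorphism for every $e$; the final assertion follows by taking $e$ with $e(W)\ge q$ for all $W$, for which $z^q_{\sW,e}(\P^r_k|D,\bullet)=z^q(\P^r_k|D,\bullet)$.

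The main obstacle is the analysis over $K$ of $H_{\eta}(V)$ and all of its faces: one must keep the modulus condition together with the face and excess conditions at the same time. The modulus part is exactly what the blow-ups of Lemmas~\ref{lem:Blow-up}--\ref{lem:Blow-up-Final} and \propref{prop:moving-mod} are designed for — the homotopy variety is not the naive graph of a translation, but the strict transform through $\ov{\Gamma}^N_1$, which is what keeps the pull-backs of $H_{\infty}$ and of $F^{\infty}$ comparable. The delicate point in choosing $\eta$ is that it must simultaneously be compatible with this blow-up (so that $\ov{\phi}_{\eta}$ is regular away from the single point $P_{\infty}$, cf. \lemref{lem:Blow-up}) and be generic enough for \corref{cor:Bloch-2} to apply against every $W_K$ and every face; verifying that $\eta=(u,\dots,u)$ threads this needle, and the bookkeeping of the cubical faces — in particular that the mixed faces and the two new faces land in the correct subcomplexes — is where the real work is.
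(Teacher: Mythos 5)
Your overall architecture (reduce to $D=H_\infty$, use the spreading lemma, build a homotopy from $\phi_\eta$, control modulus via \propref{prop:moving-mod} and excess via \corref{cor:Bloch-2}) matches the paper, but the third reduction contains a genuine gap that breaks the argument as stated. You reduce to proving that the quotient complex over $K=k(\P^r_k)$ is acyclic, and you propose to do this by making $H_\eta$ a \emph{degree-one endomorphism of the complex over $K$}, applied to arbitrary cycles $V\in\un{z}^q_{\sW_K,e'}(\P^r_K|H_\infty,n)$. But \lemref{lem:Bloch-1} and \corref{cor:Bloch-2} only apply to closed subsets defined over $k$: their genericity hypothesis is a transcendence-degree condition over $k$, and $\eta=(u,\dots,u)$ is generic only relative to $k$, not relative to $K$ (its coordinates generate $K$). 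For a cycle already defined over $K$ the translation by $\eta$ need not improve, and can worsen, the excess. Concretely, take $q=r$, $n=0$, $\sW=\{W\}$ with $W=\{0\}\subset\A^r_k$, and let $V$ be the $K$-rational point $\{\eta\}\subset\A^r_K$; it lies in $z^r_{\sW_K}(\P^r_K|H_\infty,0)$, yet the $\{y_{n+1}=1\}$-face of $H_\eta(V)$ is $\{x: x+\eta\in V\}=\{0\}=W_K$, which meets $W_K$ with maximal excess. So the claimed identity $\partial H_\eta+H_\eta\partial=\id-\tau_\eta$ with $\tau_\eta$ landing in $z^q_{\sW_K,e'-1}$ (and with $H_\eta$ preserving the excess bounds) is not available, and the acyclicity of the quotient over $K$ is not obtained this way; note also that this acyclicity is literally the statement of the theorem over the base field $K$, so your reduction asks for something at least as strong as what is being proved.

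The repair is the structure the paper actually uses: define the homotopy only on cycles coming from $k$, namely $H^*_n(V)=({\rm pr}_{K/k}\circ\phi_{\eta,n})^{-1}(V)$ for $V\in z^q_{\sW,e}(\P^r_k|H_\infty,n)$, so that it is a map of complexes from $z^q_{\sW,e}(\P^r_k|H_\infty,\bullet)/z^q_{\sW}(\P^r_k|H_\infty,\bullet)$ to the base-changed quotient, shifted by one. Then \corref{cor:Bloch-2} and \lemref{lem:Bloch-1} do apply (the sets $A$, $B$ there are defined over $k$ and only translated by the generic $K$-point), giving $\partial\circ H^*_\eta+H^*_\eta\circ\partial={\rm pr}^*_{K/k}-H^*_{\eta,1}$ with $H^*_{\eta,1}$ landing in $z^q_{\sW_K}(\P^r_K|H_\infty,\bullet)$, hence zero in the quotient; since the translate of a $k$-cycle meets every $W_K\times F$ \emph{properly}, the paper kills the full quotient $z^q_{\sW,e}/z^q_{\sW}$ in one stroke (your weaker ``excess drops by one'' and the stepwise $e\mapsto e-1$ filtration are unnecessary, and are recovered afterwards in \corref{cor:moving-PS-Gen} by a short exact sequence). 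Finally, ${\rm pr}^*_{K/k}$ being zero on homology combines with its injectivity from \propref{prop:spread} to give acyclicity over $k$. Your treatment of the modulus condition, of the face at $y_{n+1}=0$, and of the mixed faces is consistent with the paper; the error is solely in where the homotopy is allowed to take its input.
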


\begin{proof}
The second part follows easily from the first part because $z^q(\P^r_k|D, \bullet) = z^q_{q} (X|D, \bullet)$. We shall prove the first part of the theorem in several steps. We can find a linear automorphism $\tau: \P^r_k \xrightarrow{\simeq} \P^r_k$ such that $\tau(D) = H_{\infty}$. Replacing $\sW$ by $ \tau (\sW)$, we reduce to the case when $D= H_{\infty}$, which we suppose from now. In view of \propref{prop:spread}, we only need to show that the map ${\rm pr}^*_{K/k}:\frac{z^q_{\sW,e}(\P^r_k|D, \bullet)}{z^q_{\sW}(\P^r_k|D, \bullet)}\to \frac{z^q_{\sW_K,e}(\P^r_K|D_K, \bullet)}{z^q_{\sW_K}(\P^r_K|D_K, \bullet)}$ is zero on the homology, where we choose $K = k(\P^r_k)$.

Following the notations so far in this section, consider the maps
\[
\A^r_K \times \square^{n+1}_K \xrightarrow{\phi_{\eta,n}}
\P^r_K \times \square^n_K \xrightarrow{{\rm pr}_{K/k}}
\P^r_k \times \square^n_k.
\]
For any irreducible cycle $V \inj \P^r_k \times \square^n_k$, let $H^*_n(V)=({\rm pr}_{K/k} \circ \phi_{\eta,n})^{-1}(V)$ and let $\ov{H}^*_n(V)$ be its closure in $\P^r_K \times \square^{n+1}_K$. We can extend this linearly to cycles in $z^q(\P^r_k|D, n)$.

Suppose $V$ is an irreducible cycle in $z^q_{\sW,e}(\P^r_k|D, n)$. We claim:
\begin{enumerate}
\item $\ov{H}^*_n(V) \in z^q_{\sW_K,e}(\P^r_K|D_K, n+1)$.
\item $\ov{H}^*_n(V) \in z^q_{\sW_K}(\P^r_K|D_K, n+1)$ if $V \in z^q_{\sW}(\P^r_k|D, n)$.
\item $\iota^*_{n+1, n+1, 0}(\ov{H}^*_n(V)) = V$ and $\iota^*_{n+1, n+1, 1}(\ov{H}^*_n(V)) \in z^q_{\sW_K}(\P^r_K|D_K, n)$.
\end{enumerate}

We now prove this claim using the previous results of this section.
Since $V$ has modulus $D$ on $\P^r_k \times \square^{n}_k$, it follows that $V$ is a closed subscheme of $\A^r_k \times \square^n_k$. In particular, $V \in z^q_{\sW^0,e}(\A^r_k, n)$, where $\sW^0 = \{W_1 \cap \A^r_k, \cdots , W_s \cap \A^r_k\}$. Since $\ov{H}^*_n(V)$ has modulus $D$ on $\P^r_K \times \square^{n+1}_K$ by \propref{prop:moving-mod}, it follows that $\ov{H}^*_n(V)$ is an integral closed subscheme of $\A^r_K \times \square^{n+1}_K$. In particular, $\ov{H}^*_n(V) = H^*_n(V)$. This shows that we can replace $\P^r_k$, $\ov{H}^*_n(V)$ and $\sW$ by $\A^r_k$, $H^*_n(V)$ and $\sW^0$, respectively, to prove the claim.

We prove (3) first. By the definition of $\phi_{\eta}$, we have $\iota^*_{n+1, n+1, 0}({H}^*_n(V)) = V$. In particular, $H^*_n(V)$ intersects $F_{n+1, n+1,0}$ and its all faces properly. We thus have to show that $\iota^*_{n+1, n+1, 1}({H}^*_n(V)) \in z^q_{\sW^0_K}(\A^r_K|D_K, n)$ to prove (3). 

Let $\A^r_k$ act on itself by translation and let it act on $\A^r_k \times \square^{n}_k$ by acting trivially on $\square^{n}_k = \square^{n}_k \times \{1\} \inj \square^{n+1}_k$. Consider the map $\psi: \A^r_K \times \square^{n}_K \to \A^r_K$ defined by $\psi(x,y) = \eta$. One checks that the assumptions of \corref{cor:Bloch-2} are satisfied. Applying this corollary to each $A = \ov{W_i \cap \A^r_k}$ (where the closure is taken in $\A^r_k$) and $B = \A^r_k \times F$ for any face $F$ of $\square^{n}_k \times \{1\}$, we deduce $\iota^*_{n+1, n+1, 1}({H}^*_n(V)) \in z^q_{\sW^0_K}(\A^r_K|D_K, n)$. We have thus proven (3). Since (2) is a special case of (1) where we take $e= 0$, we are left with proving (1).

To prove (1), it is enough to consider the case when $\sW = \{W\}$ is a singleton. Note $V \in z^q_{W,e}(\A^r_k, n)$ and let $F \inj \square^{n+1}_K$ be any face. If $F \inj \square^n_K \times \{0\}$, then the intersection ${H}^*_n(V) \cap (W \times F)$ has the desired dimension because $\iota^*_{n+1, n+1, 0}({H}^*_n(V)) = V$ and $V \in z^q_{W,e}(\A^r_k, n)$. We have already proven in (3) that the intersection ${H}^*_n(V) \cap (W \times F)$ is proper if $F \inj \square^n_K \times \{1\}$. We can thus assume that $F = F'_K \times \square_K$, where $F'$ is a face of $\square^n_k$.

Set $Z = V \cap (\A^r_k \times F')$. Consider the map $\psi: \A^r_K \times \square_K \times F'_K \to \A^r_K$ defined by $\psi(x,t, y) = \eta t$ and let $\theta: \A^r_K \times \square_K \times F'_K \to \A^r_K \times \square_K \times F'_K$ be given by $\theta(x,t, y) = (x + \eta t, t, y)$. Let $\A^r_k$ act by translation on itself and trivially on $\square_k \times  F'$. Then $\theta(x,t, y) = \psi(x,t, y) \cdot (x,t, y)$. Applying \lemref{lem:Bloch-1} with $X = \A^r_k \times \square_k \times F', \ A = \ov{W} \times \square_k \times F', U = \A^r_k \times \G_{m,k} \times F',$ and $B = (V \times \square_k) \cap F_k = Z \times \square_k \inj X \times F',$
it follows that the intersection $\theta(A_K) \cap B_K$ is proper away from $\A^r_K \times \{0\} \times F'_K$, i.e., the intersection $(H^*_n(V) \cap F) \cap (W_K \times F)$ is proper away from $\A^r_K \times \{0\} \times F'_K$.

On the other hand, as $V \in z^q_{W,e}(\A^r_k, n)$ and so $V$ meets $W \times F'$ in excess dimension at most $e(W)$, it follows that $H^*_n(V) \cap F$ must meet $W \times F$ in excess dimension at most $e(W)$ along $\A^r_K \times \{0\} \times F'_K$. Thus $H^*_n(V)$ intersects $W_K \times F_K$ in excess dimension at most $e(W)$ for all faces $F_K \inj \square^{n+1}_K$. In other words, $H^*_n(V) \in z^q_{W_K,e}(\A^r_K, n+1)$. This proves (1) and hence the claim.

It follows from the claim that there is a chain homotopy
\[
H^*_{\eta}: \frac{z^q_{\sW,e}(\P^r_k|D, \bullet)}{z^q_{\sW}(\P^r_k|D, \bullet)}\to \frac{z^q_{\sW_K,e}(\P^r_K|D_K, \bullet)}{z^q_{\sW_K}(\P^r_K|D_K, \bullet)}[-1]
\]
and composed with the restriction map $\{1\} \inj \square_k$, there is a chain map
\[
H^*_{\eta, 1}: \frac{z^q_{\sW,e}(\P^r_k|D, \bullet)}{z^q_{\sW}(\P^r_k|D, \bullet)} \to \frac{z^q_{\sW_K,e}(\P^r_K|D_K, \bullet)}{z^q_{\sW_K}(\P^r_K|D_K, \bullet)}
\]
such that $H^*_{\eta} \circ \partial + \partial \circ H^*_{\eta} ={\rm pr}^*_{K/k} - H^*_{\eta, 1}$. Since $H^*_{\eta, 1} = 0$ by the claim, we see that ${\rm pr}^*_{K/k}$ is zero on the homology. The proof of the theorem is complete.
\end{proof}

\begin{cor}\label{cor:moving-PS-Gen}
Given an integer $r \ge 1$, let $D \inj \P^r_k$ be a hyperplane. Let $\sW = \{W_1, \cdots , W_s\}$ be a finite collection of locally closed subsets of $\P^r_k$ and let $e: \sW \to \Z_{\ge 0}$ be a set function. Then, the inclusion $z^q_{\sW,e-1}(\P^r_k|D, \bullet) \inj z^q_{\sW,e}(\P^r_k|D, \bullet)$ is a quasi-isomorphism.
\end{cor}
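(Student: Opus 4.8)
The plan is to obtain \corref{cor:moving-PS-Gen} as a formal consequence of \thmref{thm:moving-PS}, with no further geometric input. The only point to observe is that $(e-1): \sW \to \Z_{\ge 0}$, as defined in \defref{defn:complex for moving}, is again a set function taking values in $\Z_{\ge 0}$; hence \thmref{thm:moving-PS} applies to it exactly as it applies to $e$. This gives that \emph{both} inclusions $z^q_{\sW}(\P^r_k|D,\bullet) \inj z^q_{\sW,e-1}(\P^r_k|D,\bullet)$ and $z^q_{\sW}(\P^r_k|D,\bullet) \inj z^q_{\sW,e}(\P^r_k|D,\bullet)$ are quasi-isomorphisms.

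Next I would note that, by \defref{defn:complex for moving}, these two maps sit in a chain of inclusions of subcomplexes of $z^q(\P^r_k|D,\bullet)$,
\[
z^q_{\sW}(\P^r_k|D,\bullet) \ \inj\ z^q_{\sW,e-1}(\P^r_k|D,\bullet) \ \inj\ z^q_{\sW,e}(\P^r_k|D,\bullet),
\]
whose composite is the second quasi-isomorphism above. By the two-out-of-three property for quasi-isomorphisms --- equivalently, by looking at the long exact homology sequence of the short exact sequence of complexes $0 \to z^q_{\sW,e-1}/z^q_{\sW} \to z^q_{\sW,e}/z^q_{\sW} \to z^q_{\sW,e}/z^q_{\sW,e-1} \to 0$, whose first two terms are acyclic by the previous paragraph --- the third term $z^q_{\sW,e}/z^q_{\sW,e-1}$ is acyclic, which is precisely the statement that $z^q_{\sW,e-1}(\P^r_k|D,\bullet) \inj z^q_{\sW,e}(\P^r_k|D,\bullet)$ is a quasi-isomorphism.

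I do not expect any real obstacle here: the entire content has already been absorbed into \thmref{thm:moving-PS}, and what remains is pure homological algebra once one remarks that $e-1$ is an admissible excess function. If a self-contained argument were preferred, one could instead rerun the proof of \thmref{thm:moving-PS}: the chain homotopy $H^*_{\eta}$ constructed there descends to the quotient complex $z^q_{\sW,e}(\P^r_k|D,\bullet)/z^q_{\sW,e-1}(\P^r_k|D,\bullet)$ --- this is immediate from step (1) of that proof, read with $e$ replaced by $e-1$, which shows $H^*_n$ preserves the subcomplex with excess function $e-1$ --- while the endpoint map $H^*_{\eta,1}$ still vanishes, since $\iota^*_{n+1,n+1,1}(H^*_n(V))$ always lands in $z^q_{\sW_K}(\P^r_K|D_K,\bullet) \subset z^q_{\sW_K,e-1}(\P^r_K|D_K,\bullet)$. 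Combined with the injectivity on homology of ${\rm pr}^*_{K/k}$ supplied by \propref{prop:spread}, equation~\eqref{eqn:spread-1}, this again forces ${\rm pr}^*_{K/k}$ to be zero on the homology of $z^q_{\sW,e}/z^q_{\sW,e-1}$. I would write up the two-out-of-three argument and, if anything, relegate this second route to a remark.
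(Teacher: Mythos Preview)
Your proposal is correct and matches the paper's proof essentially verbatim: the paper also applies \thmref{thm:moving-PS} to both $e$ and $e-1$, then uses the short exact sequence $0 \to z^q_{\sW,e-1}/z^q_{\sW} \to z^q_{\sW,e}/z^q_{\sW} \to z^q_{\sW,e}/z^q_{\sW,e-1} \to 0$ to conclude that the third term is acyclic. Your alternative ``rerun'' route is unnecessary but also sound.
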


\begin{proof}
For every $e: \sW \to \Z_{\ge 0}$, there is a short exact sequence of chain complexes
\begin{equation}\label{eqn:moving-PS-Gen-0}
0 \to \frac{z^q_{\sW,e-1}(\P^r_k|D, \bullet)}{z^q_{\sW}(\P^r_k|D, \bullet)}\to \frac{z^q_{\sW,e}(\P^r_k|D, \bullet)}{z^q_{\sW}(\P^r|D, \bullet)} \to \frac{z^q_{\sW,e}(\P^r_k|D, \bullet)}{z^q_{\sW,e-1}(\P^r_k|D, \bullet)}\to 0.
\end{equation}
The first two quotient complexes are acyclic by \thmref{thm:moving-PS}. Hence the last one must be acyclic as well.
\end{proof}

\section{Moving lemma for projective schemes}\label{sec:ML-proj}
In this section, we prove the moving lemma for the higher Chow groups of 
projective schemes with very ample modulus. We assume for a while that the base field $k$ is infinite. This is only a temporary assumption and
will be removed in the final statement of the moving lemma
(see \thmref{thm:Main-moving}). 

We fix a closed embedding $\iota_X: X \inj \P^N_k$ of an equidimensional 
reduced projective 
scheme $X$ over $k$ of dimension $d \ge 1$, with $d < N$. 
We fix two distinct hyperplanes $H_m, H_{\infty} \inj \P^N_k$ and let 
$L_{m,\infty} = H_m \cap H_{\infty} \in \Gr(N-2, \P^N_k)$. 
We may assume that $X \not \subset H_m \cup H_{\infty}$. 
We set 
\[
X_0 = X \setminus H_{\infty}\stackrel{j_0}{\inj} X, 
U = X \setminus H_m, U_0 = U \cap X_0, D = \iota^*_X(H_m) \ \mbox{and} \
D_0 = j^*_0(D)
\] 
so that $X = U \cup D $ and $X_0 = U_0 \cup D_0.$ 
We shall assume that $U$ is smooth over $k$.
(N.B. The hyperplane $H_m$ could have been just called $H$, but we insisted on putting the subscript $m$ to remind ourselves psychologically that this $H_m$ later induces the modulus divisor.)

 
Given a locally closed subset $S \subsetneq \P^N_k$, let $\Gr(S, n, \P^N_k)$ denote the set of $n$-dimensional linear subspaces of $\P^N_k$ which do not intersect $S$. Denote the set of $n$-dimensional linear subspaces of $\P^N_k$ containing a locally closed subscheme $S \subsetneq \P^N_k$ by $\Gr_S(n, \P^N_k)$. We let $\dim(\emptyset) = -1$. Given two locally closed subsets $Z_1, Z_2 \inj \P^N_k$, let $\Sec(Z_1, Z_2)$ denote the union of all lines $\ell_{xy} \inj \P^N_k$, joining $x \in Z_1$ and $y \in Z_2$ with $x \neq y$. One checks that $\dim(\Sec(Z_1,Z_2)) = \dim(Z_1) + \dim(Z_2) - \dim(Z_1 \cap Z_2)$ if $Z_1$ and $Z_2$ are linear subspaces of $\P^N_k$. In general, we have $\dim(\Sec(Z_1,Z_2)) \le \dim(Z_1) + \dim(Z_2) + 1$. Given a closed point $x \in X$, let $T_x(X)$ denote the union of lines in $\P^N_k$ which are tangent to $X$ at $x$. For any locally closed subset $Y \subseteq X$, let $T_Y(X) = \bigcup_{x \in Y} T_x(X)$. It is clear that $\dim(T_Y(X)) \le \dim(Y) + d$ if $Y \subseteq U$. 
With these notations, we first prove the following. 


\begin{lem}\label{lem:fs-open-1}
Let $W \inj \P^N_k$ be a closed subscheme of dimension at most $d$ such that $W \not \subset H_m$. Then, $\Gr(W, N-d-1, H_m)$ is a dense open subset of $\Gr(N-d-1, H_m)$. If $L_{m,\infty}$ intersects $W$ properly, then $\Gr(W, N-d-1, L_{m, \infty})$ is a dense open subset of $\Gr(N-d-1, L_{m, \infty})$. 
\end{lem}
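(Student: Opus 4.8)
The plan is to treat the two assertions uniformly. Each one says that inside a fixed linear subspace $\Lambda \subseteq \P^N_k$ --- namely $\Lambda = H_m \cong \P^{N-1}_k$ for the first assertion and $\Lambda = L_{m,\infty} \cong \P^{N-2}_k$ for the second --- the set of $(N-d-1)$-planes $L \subseteq \Lambda$ with $L \cap W = \emptyset$ is dense open in $\Gr(N-d-1, \Lambda)$. Since $L \subseteq \Lambda$ forces $L \cap W = L \cap (W \cap \Lambda)$, only the closed subscheme $Z := W \cap \Lambda$ of $\Lambda$ intervenes, and the two cases will differ only in the numerics.

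For openness, I would introduce the incidence correspondence $I = \{(L,x) \mid x \in L\}$, a closed subscheme of $\Gr(N-d-1,\Lambda) \times Z$. As $Z$ is projective, the first projection $\pr_1 \colon I \to \Gr(N-d-1,\Lambda)$ is proper, so $\pr_1(I)$ is closed, and its complement is precisely $\Gr(W, N-d-1, \Lambda)$. Since $\Gr(N-d-1,\Lambda)$ is an irreducible variety, density of this open set reduces to its non-emptiness.

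For non-emptiness I would invoke the standard general-position fact: in $\P^m$ a general linear subspace of dimension $a$ avoids a fixed closed subset of dimension $b$ as soon as $a + b < m$. (It follows from the same kind of incidence count: the $a$-planes through a fixed point of $\P^m$ form a Grassmannian of dimension $a(m-a)$, so the locus of $a$-planes meeting a set of dimension $b$ has dimension at most $b + a(m-a)$, which is $< (a+1)(m-a) = \dim \Gr(a, \P^m)$ exactly when $a+b<m$.) Here $a = N-d-1$ in both cases, and $0 \le N-d-1 \le m$ because $1 \le d < N$, so the relevant Grassmannian is itself non-empty. In the first case $m = N-1$: from $\dim W \le d$ and $W \not\subset H_m$ --- understood, as it must be, componentwise, so that no $d$-dimensional component of $W$ lies in $H_m$; this is automatic if $W$ is irreducible --- one gets $\dim Z = \dim(W \cap H_m) \le d-1$, hence $a + b \le (N-d-1) + (d-1) = N-2 < N-1 = m$. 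In the second case $m = N-2$: properness of $W \cap L_{m,\infty}$ in $\P^N_k$ gives the sharper estimate $\dim Z = \dim(W \cap L_{m,\infty}) \le \dim W + \dim L_{m,\infty} - N \le d + (N-2) - N = d-2$, hence $a + b \le (N-d-1) + (d-2) = N-3 < N-2 = m$. In either case the general-position fact yields a plane in $\Gr(W, N-d-1, \Lambda)$, as required.

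The proof is largely bookkeeping, and the one delicate point --- the step I expect to be the crux --- is getting the dimension of $Z$ right in each case. For the first assertion one must make sure the hypothesis $W \not\subset H_m$ is strong enough (componentwise) to force $\dim(W \cap H_m) \le d-1$; for the second it is essential that the \emph{proper}-intersection hypothesis gives $\dim(W \cap L_{m,\infty}) \le d-2$ rather than just $\le d-1$, since $L_{m,\infty}$ has dimension only $N-2$ and the weaker bound would not close the count.
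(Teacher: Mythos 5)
Your proof is correct and takes essentially the same route as the paper: an incidence correspondence together with the dimension counts $\dim(W\cap H_m)\le d-1$ and $\dim(W\cap L_{m,\infty})\le d-2$, the only difference being that you split the conclusion into openness (properness of the projection from the incidence set) plus non-emptiness, whereas the paper bounds the dimension of the image of the incidence variety by $\dim \Gr(N-d-1,H_m)-1$ (resp.\ $\dim \Gr(N-d-1,L_{m,\infty})-1$) and concludes density directly. Your explicit caveat that $W\not\subset H_m$ must be read componentwise to guarantee $\dim(W\cap H_m)\le d-1$ is a fair point left implicit in the paper, and it is satisfied in all of the paper's applications, where the relevant $W$ is a tangent space or a union of lines through points off $H_m$.
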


\begin{proof}
Consider the incidence variety $S = \{(x, L) \in W \times \Gr(N-d-1, H_m)| x \in L\}$. We have the projection maps of projective schemes
\begin{equation}\label{eqn:fs-open-1-0} 
\xymatrix@C1pc{
W & S \ar[r]^<<<{\pi_2} \ar[l]_{\pi_1} &  \Gr(N-d-1, H_m).}
\end{equation}

The fiber of $\pi_1$ over $W \setminus H_m$ is empty and it is a smooth fibration over $(W \cap H_m)_{\rm red}$ with each fiber isomorphic to $\Gr(N-d-2, \P^{N-2}_k)$. It follows that $\dim(S) = \dim(W \cap H_m) + d(N-d-1) \le d + d(N-d-1) -1 = d(N-d) -1$. Thus $\pi_2(S)$ is a closed subscheme of $\Gr(N-d-1, H_m)$ of dimension at most $d(N-d) -1$. On the other hand, $\dim(\Gr(N-d-1, H_m)) = d(N-d)$ so that $\Gr(W, N-d-1, H_m)$ is dense open in $ \Gr(N-d-1, H_m) \setminus \pi_2(S)$.

If $L_{m,\infty}$ intersects $W$ properly, then we can argue as above with $H_m$ replaced by $L_{m, \infty}$. We find in this case that $\dim(\pi_2(S)) \le \dim(S) = \dim(W \cap L_{m, \infty}) + (d-1)(N-d-1) \le d + (d-1)(N-d-1) -2 = (d-1)(N-d) -1$. Since $\dim(\Gr(N-d-1, L_{m, \infty})) = (d-1)(N-d)$, we arrive at the desired conclusion.
\end{proof}

Given an inclusion of linear subspaces $L \subsetneq L' \subseteq \P^N_k$ such that $\dim(L) \le N-d-1$ and $X \cap L = \emptyset$, the linear projection away from $L$ defines a Cartesian diagram
\begin{equation}\label{eqn:proj-0}
\xymatrix@C1pc{
X\setminus L' \ar[r] \ar[d] & X \ar[d]^{\phi_L} & X \cap L' \ar[l] \ar[d] \\
\P^d_k \setminus L' \ar[r] & \P^d_k & \P^d_k \cap L' \ar[l]}
\end{equation}
of finite maps, where $\P^d_k \inj \P^N_k$ is a linear subspace complementary to $L$. Let $R_L(X) \subset X$ denote the ramification locus of $\phi_L$.

For an irreducible locally closed subset $A \subsetneq X$, let $L^{+}(A)$ denote the closure of $\phi^{-1}_L(\phi_L(A)) \setminus A$ in $\phi^{-1}_L(\phi_L(A))$. We linearly extend this definition to all cycles on $X$. We shall use similar notation for locally closed subsets of $X \times \square^n$ with $\phi_L$ replaced by $\phi_L \times {\rm Id}_{\square^n}$.

For two  locally closed subsets $A, C \subset X$, let $e(A,C) = {\rm max}\{\dim(Z)- \dim(A)-\dim(C) + d\}$, where the maximum is taken over all irreducible components $Z$ of $A \cap C$, assuming these numbers are non-negative. Else, we take $e(A,C)$ to be zero.

\begin{lem}\label{lem:Levine}
Let $A \subsetneq X \setminus H_m$ be an irreducible locally closed subset and let $C \subsetneq X \setminus H_m$ be any locally closed subset. Let $\Sigma = \{x_1, \cdots , x_s\}$ be a set of distinct closed points of $X$ contained in $A$. Then, there is a dense open subset $\sU^{A,C}_X \inj \Gr(N-d-1, H_m)$ such that the following hold for every $L \in \sU^{A,C}_X$. 
\begin{enumerate}
\item $X \cap L = \emptyset$.
\item $R_L(X)$ contains no irreducible component of $A, C$ or $A \cap C$.
\item
$R_L(X) \cap \Sigma = \emptyset$.
\item $e(L^{+}(A) \cap C) \le {\rm max}\{e(A,C)-1,0\}$.
\item The map $k(\phi_L(x)) \to k(x)$ is an isomorphism for $x \in \Sigma$. 
\end{enumerate}
\end{lem}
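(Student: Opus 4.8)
The plan is to establish each of the five conditions as an open dense condition on $L \in \Gr(N-d-1, H_m)$ and then intersect the resulting open sets. Since $\Gr(N-d-1, H_m)$ is irreducible, a finite intersection of dense opens is again dense open, so it suffices to treat the conditions one at a time. Throughout I will use that $X, A, C$ all lie outside $H_m$, so that the relevant secant and tangent loci, which control the ramification of the linear projection, have dimension at most $d$ or $d-1$ as computed in the paragraph preceding the lemma; the key numerical input is the inequality $\dim \Sec(Z_1, Z_2) \le \dim Z_1 + \dim Z_2 + 1$ and $\dim T_Y(X) \le \dim Y + d$ for $Y \subseteq U$, combined with $\dim \Gr(N-d-1, H_m) = d(N-d)$.

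For (1), the locus of $L$ meeting $X$ is the image under $\pi_2$ of the incidence variety $\{(x,L) : x \in X \cap L\}$; since $X \not\subset H_m$ and $\dim X = d \le N-d-1$ is available after noting $d < N$, \lemref{lem:fs-open-1} applied to $W = X$ gives that $\Gr(X, N-d-1, H_m)$ is dense open. For (3) and (5), fix the finite set $\Sigma$: a point $x_j$ lies in $R_L(X)$ precisely when $L$ meets the tangent cone $T_{x_j}(X)$, and the condition that $k(\phi_L(x_j)) \to k(x_j)$ fail to be an isomorphism is similarly governed by $L$ meeting a secant-type locus through $x_j$; each such bad locus, being the image of an incidence variety fibered over a subvariety of $H_m$ of dimension at most $d$ (or less), has dimension strictly smaller than $d(N-d)$, and a finite union of them remains a proper closed subset — I would package this again via the incidence-variety dimension count as in \lemref{lem:fs-open-1}. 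For (2), the condition that $R_L(X)$ contain an irreducible component $Z$ of $A$, $C$, or $A \cap C$ forces $L$ to meet $T_Z(X)$; since $Z \subseteq U$ is smooth (here one uses $X \setminus H_m = U$ smooth) of dimension at most $d-1$ when it is a proper component, $\dim T_Z(X) \le 2d-1 < 2d$, and the image in $\Gr(N-d-1, H_m)$ of the corresponding incidence variety again has codimension at least one; taking the intersection over the finitely many components $Z$ keeps it open dense.

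The main obstacle is condition (4), the excess-dimension drop $e(L^{+}(A) \cap C) \le \max\{e(A,C)-1, 0\}$. Here the point is that $\phi_L^{-1}(\phi_L(A))$ is, generically in $L$, the union of $A$ with a \emph{residual} variety $L^{+}(A)$ of the same dimension as $A$, and one must show that for generic $L$ this residual piece meets $C$ in dimension one smaller than the worst intersection of $A$ with $C$ itself. The strategy is the classical one: parametrize pairs $(L, z)$ with $z \in L^{+}(A) \cap C$ via the incidence correspondence $\{(a, c, L) : a \in A,\ c \in C,\ a \ne c,\ \ell_{ac} \cap L \ne \emptyset \text{ appropriately}\}$ inside $A \times C \times \Gr(N-d-1, H_m)$, project to $\Gr$, and do a fiber-dimension count showing that over a dense open set of $L$ the fiber drops by one compared to the naive expectation coming from $e(A,C)$. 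The subtlety — and why one needs $H_m$ genuinely, i.e. very ampleness of the modulus divisor — is that $L$ is constrained to lie in the \emph{hyperplane} $H_m$ rather than roaming over all of $\Gr(N-d-1, \P^N_k)$, which cuts the available dimension by a predictable amount; one must verify that this reduced freedom is still enough for the generic transversality, and this is exactly the computation done in the second half of \lemref{lem:fs-open-1} with $L_{m,\infty}$ in place of $H_m$ when a properness hypothesis against $L_{m,\infty}$ is in force. I would carry out the count carefully, treating separately the locus where $A \cap C$ already has excess dimension (so $e(A,C) \ge 1$ and one wants a genuine drop) and the locus where it does not (where $e(A,C) = 0$ and one only needs $L^{+}(A) \cap C$ to have no excess at all), and conclude by intersecting with the opens from (1)–(3), (5).
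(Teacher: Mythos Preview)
Your overall strategy—establish each of (1)--(5) as a dense open condition via incidence-variety dimension counts and intersect—is exactly the paper's approach, and your sketch of (4) via the correspondence $\{(a,c,L) : a \neq c,\ \ell_{ac} \cap L \neq \emptyset\}$ is on target. The key computation you do not quite state is that $\ell_{ac} \not\subset H_m$ (since $a, c \notin H_m$), so $\ell_{ac} \cap H_m$ is a single point and the fiber over $(a,c)$ has dimension exactly $d(N-d-1)$; the reference to $L_{m,\infty}$ is a red herring and plays no role here. You will also need to split $L^{+}(A) \cap C$ into the unramified part (handled by that incidence variety) and the ramified part $R_L(X) \cap A \cap C$, whose dimension is controlled by (2) rather than by your correspondence.

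There are two genuine gaps. For (2), your plan to avoid the whole locus $T_Z(X)$ fails the dimension count: if $\dim Z = d-1$ then $\dim T_Z(X)$ can be $2d-1$, so $\dim(T_Z(X) \cap H_m)$ can be $2d-2 \ge d$ once $d \ge 2$, and \lemref{lem:fs-open-1} does not apply. The fix (and what the paper does) is simpler: for each component $Z$ pick a \emph{single} closed point $x \in Z$; then $\dim T_x(X) = d$ by smoothness of $U$, \lemref{lem:fs-open-1} applies with $W = T_x(X)$, and $L \cap T_x(X) = \emptyset$ forces $x \notin R_L(X)$, hence $Z \not\subset R_L(X)$. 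For (5), the hand-wave about a ``secant-type locus through $x_j$'' is not enough when $k$ is not algebraically closed, since $x_j$ need not be $k$-rational and no such locus exists over $k$. The paper passes to $\bar{k}$, lets $S_{x_j} = \pi_X^{-1}(x_j) \subset X_{\bar{k}}$, and avoids (via \lemref{lem:fs-open-1}) the $1$-dimensional union $W'$ of secant lines among the points of $\bigcup_j S_{x_j}$; a density argument (using that $k$ is infinite) descends the open set to $k$, and then one shows $k(\phi_L(x_j)) \hookrightarrow k(x_j)$ is both separable (by (3)) and purely inseparable (because $W' \cap L_{\bar{k}} = \emptyset$ makes $\phi_{L_{\bar{k}}}$ injective on $S_{x_j}$, forcing $[k(x_j):k(\phi_L(x_j))]^{\mathrm{sep}} = 1$).
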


\begin{proof}
The item (1) follows from \lemref{lem:fs-open-1}. So we prove 
the remaining ones. We may assume that $C$ is irreducible. Let $L \in \Gr(X, N-d-1, H_m)$. Set $T^L_r = R_L(X) \cap A \cap C = R_L(U) \cap A \cap C$ and $T^L_{ur} = (L^{+}(A) \cap C) \setminus T^L_r$. (N.B. `$r$' is for ramified and `$ur$' is for unramified.) Then we must have $L^{+}(A) \cap C \subseteq  T^L_{ur} \cup T^L_r$ and hence $\dim(L^{+}(A) \cap C) \le {\rm max}\{\dim(T^L_{ur}), \dim(T^L_r)\}$. 
Since the left square in ~\eqref{eqn:proj-0} is Cartesian (where $L' = H_m$)
and $A, C \subset U = X \setminus H_m$, 
it follows that the loci $T^L_r$ and $T^L_{ur}$ are contained in $U$. 

Let $S \inj ((A \times C) \setminus \Delta_X) \times \Gr(N-d-1, H_m)$ be the incidence variety $S = \{(a,c, L)| \ell_{ac} \cap L \neq \emptyset\}$. 
We have the projections $A \times C \xleftarrow{pr_1} S \xrightarrow{pr_2} \Gr(N-d-1, H_m)$.

Since $L \cap X = \emptyset$, we see that for any point $(a,c) \in ((A \times C) \setminus \Delta_X)$, $pr^{-1}_1((a,c)) = \{L \in \Gr(N-d-1, H_m)| \dim(L \cap \ell_{ac}) = 0\}$. Consider the map $\pi: pr^{-1}_1((a,c))   \to \ell_{ac}$ given by $\pi(L) = L \cap \ell_{ac}$.

Our hypothesis says that $(A \cup C) \cap H_m = \emptyset$ and this implies that $\ell_{ac} \not \subset H_m$. In particular, $x_{ac} = \ell_{ac} \cap H_m$ is a single closed point of $\P^N_k$. This implies that $\pi^{-1}(\ell_{ac} \setminus \{x_{ac}\}) = \emptyset$ and $\pi^{-1}(\{x_{ac}\}) = pr^{-1}_1((a,c)) = \{L \in  \Gr(N-d-1, H_m)| x_{ac} \in L\} \simeq \Gr(N-d-2, \P^{N-2}_k)$. It follows that $\dim(pr^{-1}_1((a,c))) = (N-d-1)(N-2-(N-d-2))= d(N-d-1)$. We conclude from this that
\begin{equation}\label{eqn:Levine-0}
\begin{array}{lll}
\dim(S) & \le & \dim(A) + \dim(C) + d(N-d-1) \\
& = & \dim(A) + \dim(C) + d(N-d) - d \\
& = &  \dim(A) + \dim(C) + \dim(\Gr(N-d-1, H_m)) - d.
\end{array}
\end{equation}

Let $p_C: S \to A \times C \to C$ be the composite projection. We now observe that $c \in T^L_{ur}$ if and only if there exists $a \in A$ such that $a \neq c$ and $\ell_{ac} \cap L \neq \emptyset$. Since $c \in C$ as well, this means that $(a,c) \in pr^{-1}_2(L)$. In other words, $T^L_{ur} \subset p_C(pr^{-1}_2(L))$. On the other hand, it follows from \eqref{eqn:Levine-0} that there is a dense open subset $\sU^{A,C}_{ur} \subseteq \Gr(N-d-1, H_m)$ such that $pr^{-1}_2(L)$ is either empty or has dimension $\dim(A) + \dim(C) - d$ for every $L \in \sU^{A,C}_{ur}$. We conclude: 

$(\star)$ There is a dense open subset $\sU^{A,C}_{ur} \subseteq 
\Gr(N-d-1, H_m)$ such that $\dim(T^L_{ur}) \le \dim(A) + \dim(C) - d$ for each 
$L \in \sU^{A,C}_{ur}$.

Since $U$ is smooth, given any point $x \in A \cap C$, our hypothesis implies that $T_x(X)$ is a locally closed subscheme of $\P^N_k$ of dimension $d$ such that $T_x(X) \not \subset H_m$. We can therefore apply \lemref{lem:fs-open-1} to find a dense open subset of $\Gr(N-d-1, H_m)$ whose elements do not meet $T_x(X)$. But this means that $x \notin R_L(X)$ for every $L$ in this dense open subset. We can repeat this for any chosen point in $A$ and $C$ as well. 
Since $\Sigma \subset A$, we therefore conclude:

$(\star \star)$ There is a dense open subset $\sU^{A,C}_{r} \subseteq \Gr(N-d-1, H_m)$ such that $R_L(X)$ does not contain any component of 
$A, C$ or $A \cap C$ and it does not intersect $\Sigma$,
whenever $L \in \sU^{A,C}_{r}$. 

For any $L \in \sU^{A,C}_{r}$, we have $\dim(T^L_r) = \dim(R_L(X) \cap A \cap C) \le {\rm max}\{\dim(A \cap C) - 1, 0\}$. Combining $(\star)$ and $(\star \star)$ with \lemref{lem:fs-open-1} and setting $\sU^{A,C}_X = \sU^{A,C}_{ur} \cap  \sU^{A,C}_{r}$, we conclude that $\sU^{A,C}_X$ is a dense open subset of $\Gr(N-d-1, H_m)$ such that $e(L^{+}(A) \cap C) \le {\rm max}\{e(A,C)-1,0\}$ for $L \in \sU^{A,C}_X$.

The proof of (5) is clear if $k$ is algebraically closed. In general, let $\ov{k}$ be an algebraic closure of $k$ and let $\pi_Y: Y_{\ov{k}} \to Y$ denote the base change to $\ov{k}$ for any $Y \in \Sch_k$. 
For any $x \in \Sigma$, let $S_x = \pi^{-1}_X(x)$ and let $S =\bigcup_{x \in \Sigma} \ S_x$. Then $S \inj X_{\ov{k}}$ is a finite set of closed points contained in $A_{\ov{k}}$. Let $W'$ be the union of lines $l_{xy}$ in $\P^N_{\ov{k}}$ such that $x \neq y \in S$. Since $S \subset A_{\ov{k}}$ and $A \cap H_m = \emptyset$, we see that $W' \not \subset H_{m, \ov{k}}$. Since $d \ge 1 = \dim(W')$, we can apply \lemref{lem:fs-open-1} to assume that $W' \cap L = \emptyset$ for all $L \in \sU^{A,C}_{X_{\ov{k}}}:= \sU^{A_{\ov{k}},C_{\ov{k}}}_{X_{\ov{k}}}$.

Since $\Gr(N-d-1, H_{m, \ov{k}})$ contains an affine space $\A^{d(N-d)}_{\ov{k}}$ as a dense open subset, we can replace $\sU^{A,C}_{X_{\ov{k}}}$ by $\sU^{A,C}_{X_{\ov{k}}} \cap \A^{d(N-d)}_{\ov{k}}$ and assume that $\sU^{A,C}_{X_{\ov{k}}} \subseteq \A^{d(N-d)}_{\ov{k}}$. Since $k$ is infinite, the set of points in $\A^{d(N-d)}_{\ov{k}}$ with coordinates in $k$ is dense in $\A^{d(N-d)}_{\ov{k}}$. Hence, there is a dense subset of $\sU^{A,C}_{X_{\ov{k}}}$ each of whose points $L$ is defined over $k$, i.e., $L \in \Gr(N-d-1, H_{m})$. Let $L \in \Gr(N-d-1, H_{m})$ be such that (1) $\sim$ (4) hold and $W' \cap L_{\ov{k}} = \emptyset$. We consider the Cartesian square
\begin{equation}\label{eqn:base-change}
\xymatrix@C1pc{
X_{\ov{k}} \ar[d]_{\pi_X} \ar[r]^{\phi_{L_{\ov{k}}}} & \P^d_{\ov{k}} 
\ar[d]^{\pi_{\P^d}} \\
X \ar[r]^{\phi_L} & \P^d_k.}
\end{equation}

{\bf Claim:} \emph{For a closed point $x \in U$ and $y: = \phi_L(x)$, one has $|\pi_{\P^d}^{-1}(y)| \le |\pi_X^{-1}(x)|$, and the equality holds if and only if 
${[k(x): k(y)]}^{\rm sep} = 1$. Furthermore, this equality holds if the map 
$\phi_{L_{\ov{k}}}: \pi_X^{-1}(x) \to \pi_{\P^d}^{-1}(y)$ is injective.}

It is an elementary fact that $|\pi_X^{-1}(x)| = {[k(x):k]}^{\rm sep}$ and 
$|\pi_{\P^d}^{-1}(y)| = {[k(y) : k]}^{\rm sep}$. The inclusions $k \inj k(y) \inj k(x)$ and therefore the equality ${[k(x):k]}^{\rm sep} = {[k(y) : k]}^{\rm sep}
\cdot {[k(x) : k(y)]}^{\rm sep}$ implies the first assertion. 
Next, the injectivity of the map 
$\phi_{L_{\ov{k}}}: \pi_X^{-1}(x) \to \pi_{\P^d}^{-1}(y)$ implies that 
$|\pi_{\P^d}^{-1}(y)| \ge |\pi_X^{-1}(x)|$. The second part of the Claim follows.

To prove (5) in general, it suffices to show that the finite field extension  
$k(\phi_L(x)) \inj k(x)$ is separable as well as purely inseparable for
each $x \in \Sigma$. Now, the separability of this extension  
is equivalent to the assertion $x \notin R_L(X)$, and this is 
guaranteed by (3). To prove inseparability, it is enough to show,
using the above claim, that 
$\phi_{L_{\ov{k}}}: \pi_X^{-1}(x) \to \pi_{\P^d}^{-1}(\phi_L(x))$ is injective. 
But this follows immediately from the fact that 
$W' \cap L_{\ov{k}} = \emptyset$. The proof of the lemma is complete.
\end{proof}

\begin{lem}\label{lem:Levine-birational-missing}
Let $\alpha \in z^q(X|H_m, n)$ be an admissible cycle. Let $C \subset X \setminus H_m$ be a locally closed subset as in \lemref{lem:Levine}. We can then find a dense open subset $\sU^{Z,C}_X \subset \Gr(N-d-1, H_m)$ such that the following hold for every $L \in \sU^{Z,C}_X$.
\begin{enumerate}
\item $X \cap L = \emptyset$.
\item For every irreducible component $Z$ of $\alpha$, no irreducible component of the support of the cycle $\phi^*_L \circ \phi_{L *}([Z]) -[Z]$ coincides with $Z$.
\end{enumerate}
\end{lem}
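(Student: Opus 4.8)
The plan is to handle each irreducible component $Z$ of $\alpha$ separately and to take $\sU^{Z,C}_X$ to be the intersection, over these finitely many $Z$, of three dense open subsets of $\Gr(N-d-1, H_m)$: the open of \lemref{lem:fs-open-1} for $W = X$ (which yields (1) and makes $\phi_L : X \to \P^d$ a finite morphism), the open of \lemref{lem:Levine} for the pair $A, C$, where $A$ is the closure in $U = X \setminus H_m$ of the image $p_X(Z)$ of $Z$ under the first projection $p_X : X \times \square^n \to X$, and one further dense open coming from an incidence variety described below. Since $\alpha$ is admissible, its modulus condition forces $Z \subset U \times \square^n$, and $U$ is smooth; thus $A$ is a non-empty irreducible locally closed subscheme of $X$, \lemref{lem:Levine} applies, and for $L$ in its open $R_L(X)$ does not contain $A$. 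Hence $\phi_L \times \mathrm{Id}_{\square^n}$ is unramified at the generic point $\eta_Z$ of $Z$, which has two consequences: the function field extension $k(Z)/k(\phi_L(Z))$ is separable, and $Z$ occurs with multiplicity exactly $1$ in the cycle $\phi_L^*([\phi_L(Z)])$.

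Granting these, condition (2) reduces to the birationality of $\phi_L|_Z : Z \to \phi_L(Z)$. Indeed, if $\phi_L|_Z$ is birational then $\phi_{L *}[Z] = [\phi_L(Z)]$, so $Z$ appears in $\phi_L^* \circ \phi_{L *}([Z])$ with coefficient $1$ and therefore cancels in $\phi_L^* \circ \phi_{L *}([Z]) - [Z]$; and every other component of $\phi_L^* \circ \phi_{L *}([Z])$ is a component of $L^{+}([Z])$, hence by construction is not equal to $Z$. In view of separability, birationality of $\phi_L|_Z$ is equivalent to generic injectivity on geometric points, and to force this for generic $L$ I would use the incidence variety
\[
S = \{((x_1,t),(x_2,t),L) \in \left((Z \times_{\square^n} Z) \setminus \Delta\right) \times \Gr(N-d-1, H_m) \mid \ell_{x_1 x_2} \cap L \neq \emptyset\},
\]
where $\Delta = \{x_1 = x_2\}$ and $\ell_{x_1 x_2}$ is the line joining the distinct points $x_1, x_2 \in U$, which avoids $H_m$. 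Since $\phi_L(x_1) = \phi_L(x_2)$ holds if and only if $\ell_{x_1 x_2} \cap L \neq \emptyset$, the fibre of $S \to \Gr(N-d-1, H_m)$ over $L$ is exactly the locus of geometric double points of $\phi_L|_Z$, while the fibres of $S \to (Z \times_{\square^n} Z) \setminus \Delta$ are all isomorphic to the variety of $(N-d-1)$-planes of $H_m$ passing through a fixed point, of constant dimension $(N-d-1)d$. Thus $\dim S = \dim((Z \times_{\square^n} Z) \setminus \Delta) + (N-d-1)d$, and by upper semicontinuity of fibre dimension there is a dense open in $\Gr(N-d-1, H_m)$ over which this double-point locus has dimension at most $\dim((Z \times_{\square^n} Z) \setminus \Delta) - d$.

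The whole argument therefore rests on the inequality $\dim((Z \times_{\square^n} Z) \setminus \Delta) < \dim Z + d$, which makes the double-point locus have dimension $< \dim Z$, i.e. makes $\phi_L|_Z$ generically injective. This is where admissibility enters decisively: projecting $Z \times_{\square^n} Z$ onto its first factor, each fibre is a fibre of $p_{\square^n} : Z \to \square^n$, so it is enough to show that no fibre $Z_{t_0} := Z \cap (X \times \{t_0\})$ has dimension $d = \dim X$; but such a $Z_{t_0}$, being closed in $U \times \{t_0\} \cong U$ of dimension $d$, would contain $X' \cap U$ for an irreducible component $X'$ of $X$, so $Z$ would contain $X' \times \{t_0\}$, and as the positive-dimensional projective variety $X'$ meets the hyperplane $H_m$ this contradicts $Z \subset U \times \square^n$. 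I expect this bound to be the crux; the remaining steps are routine: the descent of the generic-injectivity count from $\overline{k}$ to $k$ (using that $k$ is infinite and $\Gr(N-d-1, H_m)$ is rational, as in the proof of \lemref{lem:Levine}), the bookkeeping of separable versus inseparable degrees, and intersecting the dense opens over the finitely many components of $\alpha$.
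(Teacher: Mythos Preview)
Your argument is correct, and it takes a genuinely different route from the paper's.

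The paper proves birationality of $\phi^n_L|_Z$ by a \emph{pointwise} analysis: it fixes a single closed point $z\in Z$, sets $y={\rm pr}_{\square^n}(z)$, and uses the ampleness argument of \thmref{thm:small} to see that the fibre $Z_y$ is \emph{finite}. It then chooses the open in $\Gr(N-d-1,H_m)$ so that $L$ avoids the secant variety of the finite set ${\rm pr}_X(Z_y)$ and so that $k(\phi_L(x))\to k(x)$ is an isomorphism (via part~(5) of \lemref{lem:Levine}); this forces $\sO_{\phi^n_L(Z),\phi^n_L(z)}\to\sO_{Z,z}$ to be an isomorphism, hence birationality. Your approach is \emph{global}: you run an incidence--variety dimension count over $(Z\times_{\square^n}Z)\setminus\Delta$ and deduce that the generic fibre of $\phi^n_L|_Z$ is a single point, which together with the separability coming from part~(2) of \lemref{lem:Levine} gives degree one. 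The crucial input for you is only the inequality $\dim Z_t<d$, which you extract directly from the modulus condition (closedness of $Z$ in $X\times\square^n$ and $X'\cap H_m\neq\emptyset$ for each component $X'$); the paper's stronger finiteness of $Z_t$ is not needed for your count. Conversely, the paper's argument bypasses the fibre product $Z\times_{\square^n}Z$ entirely and never needs to bound its dimension. Two small remarks: your appeal to ``upper semicontinuity'' is really the elementary bound $\dim(\text{generic fibre})\le\dim S-\dim\Gr(N-d-1,H_m)$; and the descent from $\overline{k}$ you flag at the end is unnecessary, since $\dim S_L<\dim Z$ already forces $(Z\times_W Z)\setminus\Delta_Z$ to miss the generic point of $Z$, whence $k(Z)\otimes_{k(W)}k(Z)$ is local and, by separability, $[k(Z):k(W)]=1$.
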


\begin{proof}
It is enough to consider the case when $\alpha = [Z]$ is an irreducible admissible cycle. For any $L \in \Gr(N-d-1, H_m)$ satisfying (1), we need to prove the following to achieve (2):
\begin{enumerate}
\item [(i)] The ramification locus $R^n_L(X)$ of $\phi^n_L$ does not contain $Z$, where $\phi^n_L:=\phi_L\times {\rm Id}_{\square^n_k}$.

\item [(ii)] $\phi^n_L|_Z: Z \to \phi^n_L(Z)$ is birational.
\end{enumerate}

Let ${\rm pr}_X: X \times \square^n_k \to X$ and ${\rm pr}_{\square^n_k}: X \times \square^n_k \to \square^n_k$ be the projection maps. We fix a closed point $z \in Z$ and set $x = {\rm pr}_X(z), \ y = {\rm pr}_{\square^n_k}(z), W = \phi^n_L(Z)$ and $A = {\rm pr}_X(Z)$. Then $A$ is a finite disjoint union of locally closed subsets of $X$. Since $Z$ is an admissible cycle having modulus $H_m$, 
we must have $A \cap H_m = \emptyset$. In particular, $x \in U$.
It is shown in the proof of \thmref{thm:small} that $(\{y\} \times X) \cap Z$ is a finite set of closed points away from $(\{y\} \times H_m)$. In particular, $D:= {\rm pr}_X((\{y\} \times X) \cap Z)$ is a finite set of closed points of $X$ containing $x$ and contained in $A$. This implies that $\Sec(x, D)$ is a closed subset of $\P^N_k$ of dimension one which is not contained in $H_m$. 
Hence, we conclude from \lemref{lem:fs-open-1} that $\Gr(\Sec(x,D), N-d-1,H_m)$ is dense open in $\Gr(N-d-1, H_m)$.

We have shown in the proof of \lemref{lem:Levine} that there is a dense open subset $\sU_{Z,1} \subset \Gr(N-d-1, H_m)$ such that $T_x(X) \cap L = \emptyset$ for each $L \in \sU_{Z,1}$. Since the left square in ~\eqref{eqn:proj-0} is
Cartesian and $\phi_L$ is finite, it follows that its restriction
$\phi^U_L: U \to \P^d_k \setminus H_m$ is also finite. Since $U$ is furthermore
smooth, it follows that $\phi^U_L$ is a finite and flat morphism of smooth
schemes.

The flatness of $\phi^U_L$ now implies that there is an open neighborhood 
$V \subset U$ of $x$ such that $\phi_L:V \to \P^d_k$ is {\'e}tale. In particular, $\phi^n_L:V \times \square^n_k \to \P^d_k \times \square^n_k$ is {\'e}tale. This implies that there is an open subset $V'$ of $Z$ containing $z$ such that 
$\phi^n_L|_{V'}: V' \to W$ is unramified. We set 
$\sU^{Z,C}_X = \Gr(\Sec(x,D), N-d-1,H_m) \cap \sU_{Z,1} \cap \sU^{A,C}_X$, 
where $\sU^{A,C}_X$ is as in \lemref{lem:Levine}.

We fix any $L \in \sU^{A,C}_X$. Since $R^n_L(X) = R_L(X) \times \square^n_k$ and no component of $A$ is in $R_L(X)$ by \lemref{lem:Levine}, it follows that $Z \not \subset R^n_L(X)$, proving (i). To prove (ii), it suffices to show that $z \notin R^n_L(Z)$, $\{z\} = (\phi^n_L)^{-1}(\phi^n_L(z)) \cap Z$ and $k(\phi^n_L(z)) \xrightarrow{\simeq} k(z)$, because they imply that the map $\sO_{W, \phi^n_L(z)} \to \sO_{Z,z}$ is an isomorphism, and hence induces isomorphism of the function fields.

We have shown above that $z \notin R^n_L(Z)$. Since the map $k(\phi_L(x)) \to k(x)$ is an isomorphism by \lemref{lem:Levine}, it follows that the map $\phi^n_L:\square^n_{k(x)} \to \square^n_{k(\phi_L(x))}$ is also an isomorphism. In particular, the map $k(\phi^n_L(z)) \to k(z)$ is an isomorphism. To show $\{z\} = (\phi^n_L)^{-1}(\phi^n_L(z)) \cap Z$, note that if there is a closed point $z' \in ((\phi^n_L)^{-1}(\phi^n_L(z)) \cap Z) \setminus \{z\}$, then $x':={\rm pr}_X(z') \in D \cap L^{+}(x)$, where recall that $L^{+}(x) = \phi_L^{-1}(\phi_L(x)) \setminus \{x\}$. But this can happen only if $\ell_{xx'} \cap L \neq \emptyset$, which is not the case because $L \in \Gr(\Sec(x,D), N-d-1,H_m)$. This finishes the proof of (ii) and the lemma.
\end{proof}

\begin{remk}\label{remk:Levine-*}
We make few comments on \lemref{lem:Levine}. 
To some readers, this result may appear similar to \cite[Lemma~3.5.4]{Levine}. But we caution the reader that the context, the underlying hypotheses and the proofs of the two results are different. We explain these differences. 
\begin{enumerate}
\item The proof of \lemref{lem:Levine} does not work if we replace $X$ by $X \cap \A^N_k$. The reason is that even if $X$ intersects $L_{m, \infty}$ properly, we may not be able to find points on $A\cap C$ whose tangent spaces will intersect $L_{m, \infty}$ properly and this will force the second part of the proof of \lemref{lem:Levine} to break down.

Since \emph{loc.cit.} considers the affine case, Levine can not therefore use the above argument. Instead, he uses the idea of reimbedding $X$ into a big enough projective space which allows him to take care of the above intersection problem associated to the tangent spaces.

\item Contrary to \emph{loc.cit.}, we can not use the reimbedding idea. The reason is that we may not be able to realize our modulus $H_m$ as pull-back of any hypersurface on the bigger projective space under the reimbedding. This in turn may not allow us to realize $H_m$ as pull-back of a hypersurface under a linear projection.  

\item The modulus condition imposes more severe restrictions on the choice of $L$ than in the situation of \emph{loc.cit.} So we need to make more refined choices and without changing the given embedding of $X$.
\end{enumerate}
\end{remk}

Let $\sW = \{W_1, \cdots , W_s\}$ be a finite collection of locally closed subsets of $X \setminus H_m$ and let $e:\sW \to \Z_{\ge 0}$ be a set function.

Let $K$ denote the function field of $\Gr(N-d-1, H_m)$ and let $L_{\rm gen} \in \Gr(N-d-1, H_m)(K)$ be the generic point of $\Gr(N-d-1, H_m)$. This can be seen as a $K$-rational point of $\Gr(N-d-1, H_m)$.

\begin{lem}\label{lem:Levine-box}
The linear projection away from $L_{\rm gen}$ defines a finite map 
$\phi_{L_{\rm gen}}: X_K \to \P^d_K$ satisfying the following conditions.
\begin{enumerate}
\item
The restriction $\phi^U_{L_{\rm gen}}: U_K \to \P^d_K \setminus H_{m,K}$ is 
finite and flat.
\item $D_K = \phi^*_{L_{\rm gen}}(H_{\rm gen})$ for the hyperplane 
$H_{\rm gen} = (H_m \cap \P^d)_K$ in $\P^d_K$.
\item The pull-back 
$\phi^*_{L_{\rm gen}}: z^q(\P^d_K|H_{\rm gen}, \bullet) \to
z^q(X_K|D_K, \bullet)$ is defined.
\item
$(\phi^*_{L_{\rm gen}} \circ \phi_{L_{\rm gen} *} \circ {\rm pr}^*_{K/k} - 
{\rm pr}^*_{K/k})$ maps 
$z^q_{\sW,e}(X|D, \bullet)$ to $z^q_{\sW_K, e-1}(X_K|D_K, \bullet)$.
\end{enumerate}
\end{lem}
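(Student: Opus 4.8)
The plan is to verify the four statements essentially in the order given, since each later one relies on the earlier ones, and to reduce everything to the generic-point version of the linear-projection constructions already set up in Lemmas~\ref{lem:fs-open-1}--\ref{lem:Levine-birational-missing}. For (1): the point $L_{\rm gen}$ lies in the dense open locus of $\Gr(N-d-1,H_m)$ over which $X\cap L=\emptyset$ (by \lemref{lem:fs-open-1}), so the linear projection $\phi_{L_{\rm gen}}\colon X_K\to\P^d_K$ is a well-defined finite morphism. Restricting to $U_K$, the square \eqref{eqn:proj-0} with $L'=H_m$ is Cartesian, so $\phi^U_{L_{\rm gen}}\colon U_K\to\P^d_K\setminus H_{m,K}$ is finite; since $U$ is smooth over $k$ and hence $U_K$ is smooth over $K$, and $\P^d_K\setminus H_{m,K}$ is smooth, a finite morphism between smooth $K$-schemes of the same dimension is automatically flat (miracle flatness / Cohen--Macaulay criterion). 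This gives (1). For (2): because $H_m\cap\P^d$ is the hyperplane of $\P^d$ cut out by the same linear form defining $H_m$, and the projection $\phi_{L_{\rm gen}}$ is by construction the map associated to that linear system, pulling back $H_{\rm gen}=(H_m\cap\P^d)_K$ along $\phi_{L_{\rm gen}}$ recovers the Cartier divisor $\iota_X^*(H_m)_K=D_K$ scheme-theoretically; one checks this on the affine charts of $\P^N$ exactly as in the classical setup. This is (2).

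For (3): with (1) and (2) in hand, $\phi_{L_{\rm gen}}$ is a morphism of modulus pairs $(X_K,D_K)\to(\P^d_K,H_{\rm gen})$ whose restriction over the complement of the modulus divisor, namely $\phi^U_{L_{\rm gen}}\colon U_K\to\P^d_K\setminus H_{m,K}$, is flat of relative dimension $0$. Therefore \lemref{lem:fpb} applies and produces the pull-back $\phi^*_{L_{\rm gen}}\colon z^q(\P^d_K|H_{\rm gen},\bullet)\to z^q(X_K|D_K,\bullet)$ on the level of cycle complexes. (One should note that proper intersection with faces and the modulus condition are both inherited because admissible cycles live entirely over the flat locus, which is the remark recalled just before \lemref{lem:fpb}.) This settles (3).

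The substance of the lemma is (4), and this is where I expect the main difficulty. Fix an irreducible admissible cycle $Z\in z^q_{\sW,e}(X|D,n)$. One wants to show that the cycle
\[
\beta \;=\; \phi^*_{L_{\rm gen}}\,\phi_{L_{\rm gen}*}({\rm pr}^*_{K/k}[Z]) \;-\; {\rm pr}^*_{K/k}[Z]
\]
has, for each $W\in\sW$ and each face $F\subset\square^n$, its intersection with $W_K\times F$ of codimension at least $q-(e-1)(W)=q-\max\{e(W)-1,0\}$. The mechanism is exactly the classical one of Bloch--Levine: over the generic point $L_{\rm gen}$, the correspondence $\phi^*\phi_*[Z]$ decomposes as $[Z]+L^+([Z])$ up to ramification contributions, so that $\beta$ is supported on $L^+_{\rm gen}([Z])$ together with the ramification cycle; by \lemref{lem:Levine} (applied with $A={\rm pr}_X(Z)$ and $C=W$) the excess-dimension of $L^+(A)\cap C$ drops by one, i.e.\ $e(L^+(A),C)\le\max\{e(A,C)-1,0\}$, and by \lemref{lem:Levine-birational-missing} the spurious components coinciding with $Z$ itself disappear, while (3) of \lemref{lem:Levine} kills the ramification-locus contribution over the chosen points. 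The technical heart is to run all of this \emph{fiberwise over each face} $F$ and \emph{simultaneously for all} $W_i$: one enlarges the finite collection of loci to which \lemref{lem:Levine} is applied to include all $W_i\times F$ (and all their pairwise intersections with ${\rm pr}_X(Z)\cap(X\times F)$), intersects the finitely many resulting dense open subsets of $\Gr(N-d-1,H_m)$, and observes that the generic point $L_{\rm gen}$ lies in that intersection. The care needed here — and the reason the modulus hypothesis forces the very-ample condition — is that every locus appearing (the $W_i$, the supports, the tangent spaces $T_x(X)$) must avoid $H_m$ so that \lemref{lem:Levine} is applicable; this is exactly guaranteed because admissible cycles with modulus $D$ have support disjoint from $H_m\times\square^n$. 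Finally one translates the dimension bounds on $L^+({\rm pr}_X(Z))\cap W_i$ back into codimension bounds for $\beta\cap(W_{i,K}\times F)$, using that $\phi^n_{L_{\rm gen}}$ is birational onto its image on $Z$ (\lemref{lem:Levine-birational-missing}) so no dimension is lost under push-pull; this yields $\beta\in z^q_{\sW_K,e-1}(X_K|D_K,\bullet)$, which is (4).
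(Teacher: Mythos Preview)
Your proposal is correct and follows essentially the same approach as the paper: (1)--(3) are deduced from the Cartesian square~\eqref{eqn:proj-0}, miracle flatness on $U$, the containment $L_{\rm gen}\subset H_m$, and \lemref{lem:fpb}, while (4) is obtained by applying Lemmas~\ref{lem:Levine} and~\ref{lem:Levine-birational-missing} to the finitely many loci arising from the components of a given cycle and the faces of $\square^n$, then specializing to the generic point $L_{\rm gen}$. The paper is more terse---it simply cites \cite[Lemma~3.5.6]{Levine} for the mechanism of (4)---but what you have written is precisely an unpacking of that reference.
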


\begin{proof}
Having established Lemmas~\ref{lem:Levine} and \ref{lem:Levine-birational-missing}, the proof of this lemma is identical to that of \cite[Lemma~3.5.6]{Levine}. The modulus condition plays no role in this deduction. Using Lemmas~\ref{lem:Levine} and \ref{lem:Levine-birational-missing} and the 
argument of \emph{loc.cit.} verbatim, one shows that given a cycle $\alpha \in z^q_{\sW, e}(X|D, p)$, there exists a dense open subset $\sU^{\alpha}_X \subseteq \Gr(N-d-1, H_m)$ such that for each $L \in  \sU^{\alpha}_X$, the linear projection away from $L$ defines a finite map $\phi_L: X \to \P^d_k$ satisfying the required conditions. 
This map is flat on $U$ as shown in the proof of
\lemref{lem:Levine-birational-missing}. Taking $L=L_{\rm gen}$ and
using \lemref{lem:fpb}, we get (1), (3) and (4). The map
$\phi_{L_{\rm gen} *}$ is defined by \cite[Proposition~2.10]{KP-1}.

The item (2) follows at once from our choice of $L_{\rm gen}$ and an elementary property of linear projection that a hyperplane section $X \cap H$ in $\P^N_k$ is a pull-back of a hyperplane of $\P^d_k$ via $\phi_L$ if and only if $L \subset H$. 
\end{proof}

We are now ready to prove our main theorem on the moving lemma for
the higher Chow groups of projective schemes with very ample modulus.

\begin{thm}\label{thm:Main-moving}
Let $k$ be any field and let $X$ be an equidimensional reduced 
projective scheme of dimension 
$d \ge 1$ over $k$. Let $D \subset X$ be a very ample effective Cartier 
divisor such that $X \setminus D$ is smooth over $k$. 
Let $\sW = \{W_1, \cdots , W_s\}$ be a finite collection of locally closed subsets of $X$ and let $e: \sW \to \Z_{\ge 0}$ be a set function. Then, the inclusion $z^q_{\sW, e-1}(X|D, \bullet) \inj z^q_{\sW,e}(X|D, \bullet)$ is a quasi-isomorphism. In particular, the inclusion $z^q_{\sW}(X|D, \bullet) \inj z^q(X|D, \bullet)$ is a quasi-isomorphism.
\end{thm}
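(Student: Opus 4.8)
The plan is to run the classical Bloch--Levine argument: reduce to a projective space via a generic linear projection and invoke the projective-space case already proved in Theorem~\ref{thm:moving-PS} and Corollary~\ref{cor:moving-PS-Gen}. Two reductions come first. To remove the standing hypothesis that $k$ is infinite, suppose $k$ is finite, choose distinct primes $\ell_1\neq\ell_2$, and let $k_i\subset\ov{k}$ be the unique infinite algebraic extension of $k$ that is a union of subextensions of $k$-degree a power of $\ell_i$. Since the cycle complexes commute with this filtered union, the infinite-field case gives that $z^q_{\sW,e-1}(X_{k_i}|D_{k_i},\bullet)\inj z^q_{\sW,e}(X_{k_i}|D_{k_i},\bullet)$ is a quasi-isomorphism; so a homology class of the quotient complex $Q:=z^q_{\sW,e}(X|D,\bullet)/z^q_{\sW,e-1}(X|D,\bullet)$ dies after base change to some finite subextension $k'_i/k$ of $\ell_i$-power degree $m_i$, and pushing forward along the finite \'etale map $X_{k'_i}\to X$ --- which preserves the modulus condition and the excess-codimension conditions --- multiplies that class by $m_i$. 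As $\gcd(m_1,m_2)=1$ the class vanishes, so we may assume $k$ infinite. Secondly, admissible modulus cycles are supported over $X\setminus D$, so the groups $z^q_{\sW,e}(X|D,\bullet)$ are unchanged if each $W\in\sW$ is replaced by $W\cap(X\setminus D)$; and the ``in particular'' clause follows from the first assertion, since $z^q(X|D,\bullet)=z^q_{\sW,\,q\cdot\mathbf{1}}(X|D,\bullet)$, whence $z^q_\sW=z^q_{\sW,0}\subset z^q_{\sW,1}\subset\cdots\subset z^q_{\sW,\,q\cdot\mathbf{1}}=z^q$ is a finite composite of quasi-isomorphisms.

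So let $k$ be infinite. Using that $D$ is very ample, fix a closed embedding $\iota_X\colon X\inj\P^N_k$ with $d<N$ and $D=\iota_X^*(H_m)$ for a hyperplane $H_m$, and an auxiliary hyperplane $H_\infty$ with $X\not\subset H_m\cup H_\infty$; then $U:=X\setminus H_m=X\setminus D$ is smooth and we are in the setting of \S\ref{sec:ML-proj}, with $\sW$ a finite family of locally closed subsets of $U$. We must show $Q$ is acyclic. Set $K=k(\Gr(N-d-1,H_m))$, a purely transcendental extension of $k$ because the Grassmannian is rational, and let $L_{\rm gen}\in\Gr(N-d-1,H_m)(K)$ be its generic point. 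By the spreading lemma (Proposition~\ref{prop:spread}), ${\rm pr}^*_{K/k}$ is injective on $H_*(Q)$, so it suffices to show ${\rm pr}^*_{K/k}$ is zero on $H_*(Q)$. Applying Lemma~\ref{lem:Levine-box} to $L_{\rm gen}$ (both as stated and with $e$ replaced by $e-1$), the projection $\phi:=\phi_{L_{\rm gen}}\colon X_K\to\P^d_K$ is finite, flat over $U_K$, satisfies $D_K=\phi^*(H_{\rm gen})$ for a hyperplane $H_{\rm gen}\subset\P^d_K$, carries modulus cycle complexes forward and back via defined maps $\phi_*$ and $\phi^*$ (the latter via Lemma~\ref{lem:fpb}), and is such that $\phi^*\circ\phi_*\circ{\rm pr}^*_{K/k}-{\rm pr}^*_{K/k}$ maps $z^q_{\sW,e}(X|D,\bullet)$ into $z^q_{\sW_K,e-1}(X_K|D_K,\bullet)$ and $z^q_{\sW,e-1}(X|D,\bullet)$ into $z^q_{\sW_K,e-1}(X_K|D_K,\bullet)$. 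Hence $\phi^*\circ\phi_*\circ{\rm pr}^*_{K/k}$ descends to a chain map $Q\to Q_K$ equal to $\overline{{\rm pr}^*_{K/k}}$, and it is enough to show this descended map is zero on homology.

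For that, note the construction of $L_{\rm gen}$ in Lemmas~\ref{lem:Levine}--\ref{lem:Levine-box} also shows that $\phi_*\circ{\rm pr}^*_{K/k}$ sends $z^q_{\sW,e}(X|D,\bullet)$ into $z^q_{\sV,e}(\P^d_K|H_{\rm gen},\bullet)$ and $z^q_{\sW,e-1}(X|D,\bullet)$ into $z^q_{\sV,e-1}(\P^d_K|H_{\rm gen},\bullet)$, where $\sV=\{\phi(W_{1,K}),\dots,\phi(W_{s,K})\}$ with $e$ transported along $W_i\mapsto\phi(W_{i,K})$ --- here one uses $\phi(L^+(W_{i,K}))\subseteq\phi(W_{i,K})$, finiteness of $\phi$, and the excess drop $e(L^+(A)\cap C)\le\max\{e(A,C)-1,0\}$ of Lemma~\ref{lem:Levine}(4). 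Dually, $\phi$ being finite and flat over $U_K$, the map $\phi^*$ sends $z^q_{\sV,e}(\P^d_K|H_{\rm gen},\bullet)$ into $z^q_{\sW_K,e}(X_K|D_K,\bullet)$ and $z^q_{\sV,e-1}(\P^d_K|H_{\rm gen},\bullet)$ into $z^q_{\sW_K,e-1}(X_K|D_K,\bullet)$. Therefore $\overline{\phi^*\circ\phi_*\circ{\rm pr}^*_{K/k}}$ factors as $Q\to Q'\to Q_K$, where $Q'=z^q_{\sV,e}(\P^d_K|H_{\rm gen},\bullet)/z^q_{\sV,e-1}(\P^d_K|H_{\rm gen},\bullet)$. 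But $H_m\cap\P^d$ is a hyperplane in $\P^d_K$, so Corollary~\ref{cor:moving-PS-Gen} gives that $Q'$ is acyclic; hence $Q\to Q'\to Q_K$, and with it ${\rm pr}^*_{K/k}$, is zero on homology. By the injectivity above, $H_*(Q)=0$, which proves the theorem.

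The step that will demand the most care is verifying that $\phi_*$ and $\phi^*$ respect the excess filtrations as needed for the factorization $Q\to Q'\to Q_K$ --- in particular that $\phi_*\circ{\rm pr}^*_{K/k}$ does not \emph{raise} the excess along the $\phi(W_{i,K})$ --- which must be extracted from the argument behind Lemma~\ref{lem:Levine-box} (the Levine computation cited there) using Lemma~\ref{lem:Levine}(4). Conceptually, though, the genuine obstacle --- already handled in the excerpt but logically the heart of the matter --- is the construction of $L_{\rm gen}$ itself: the center of projection must be chosen so that \emph{simultaneously} $D_K=\phi^*(H_{\rm gen})$ (forcing $L_{\rm gen}\subset H_m$, i.e. requiring $D$ to be realized as an honest hyperplane section --- this is exactly where very ampleness is indispensable), $\phi^*$ preserves admissibility with modulus, $\phi$ is birational onto its image on the given cycle, and $\phi^*\phi_*-\mathrm{id}$ strictly lowers the excess along every member of $\sW$; the last point rests on the tangent-space and secant-variety dimension estimates that go through only because the relevant ``bad'' loci avoid $H_m$ (compare Remark~\ref{remk:Levine-*}).
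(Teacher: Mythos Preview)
Your proposal is correct and follows essentially the same route as the paper's proof: reduce to infinite $k$ via two coprime-degree extensions and push-forward, replace $\sW$ by $\sW\cap(X\setminus D)$, embed $X$ so that $D$ is a hyperplane section, pass to the generic center $L_{\rm gen}\in\Gr(N-d-1,H_m)$, use Lemma~\ref{lem:Levine-box} to see that $\phi^*\phi_*\,{\rm pr}^*_{K/k}-{\rm pr}^*_{K/k}$ kills the excess quotient, factor $\phi^*\phi_*$ through the analogous quotient on $(\P^d_K,H_{\rm gen})$ which is acyclic by Corollary~\ref{cor:moving-PS-Gen}, and conclude via the spreading lemma. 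Your write-up is in fact slightly more explicit than the paper's in two places --- you note that $K$ is purely transcendental (needed for Proposition~\ref{prop:spread}) and you spell out why $\phi^*\phi_*\,{\rm pr}^*_{K/k}$ descends to the quotient by also applying Lemma~\ref{lem:Levine-box} with $e$ replaced by $e-1$ --- whereas the paper dispatches the factorization through $\P^d_K$ by a citation to \cite[\S 6C]{KP}.
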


\begin{proof}
The second part follows easily from the first part by induction because $z^q_{\sW}(X|D, \bullet) = z^q_{\sW,0}(X|D, \bullet)$ and $z^q(X|D, \bullet) = z^q_{\sW,q}(X|D, \bullet)$. We thus need to show that the quotient complex $\frac{z^q_{\sW,e}(X|D, \bullet)}{z^q_{\sW,e-1}(X|D, \bullet)}$ is acyclic.

First suppose that the theorem is true for all infinite fields and let $k$ be a finite field. Take a homology class $\alpha$ in this quotient. We choose two distinct primes $\ell_1$ and $\ell_2$, other than ${\rm char} (k)$, and take pro-$\ell_i$-extensions $\iota_i: \Spec (k_i) \to \Spec (k) $ for $i =1, 2$. Then the case of infinite fields tells us that $\iota^*_i(\alpha) = 0$ for $i = 1,2$. Hence, a descent argument implies that there are finite extensions $\tau_i: \Spec (k_i ') \to \Spec (k) $ of relatively prime degrees such that $\tau^*_i(\alpha) = 0$ for $i = 1,2$. Using the projection formula for
finite and flat morphisms (see \cite[Theorem~3.12]{KP-1}), this implies that $d_1\alpha = 0 = d_2\alpha$, where $(d_1, d_2) = 1$. We conclude that $\alpha = 0$.

We can now assume that $k$ is infinite. We set $\sW^0 = \{W_1 \setminus D, \cdots , W_s \setminus D\}$. Since a cycle in $z^q(X|D, p)$ does not intersect $D \times \square^p$, we see that $z^q_{\sW}(X|D, \bullet) = z^q_{\sW^0}(X|D, \bullet)$, and we may assume that $W \cap D = \emptyset$ for each $W \in \sW$.

Since $D$ is very ample, we can choose a closed embedding $\iota_X: X \inj \P^N_k$ and a hyperplane $H_m \subset \P^n_k$ such that $D = \iota^*(H_m)$. If $X = \P^N_k$, we are done by \thmref{thm:moving-PS}. So we can assume that $1 \le d \le N-1$.

It follows from \lemref{lem:Levine-box} that the map
\begin{equation}\label{eqn:Levine-box-0}
(\phi^*_{L_{\rm gen}} \circ \phi_{L_{\rm gen} *} \circ {\rm pr}^*_{K/k} - {\rm pr}^*_{K/k}):\frac{z^q_{\sW,e}(X|D, \bullet)}{z^q_{\sW,e-1}(X|D, \bullet)} \to  \frac{z^q_{\sW_K,e}(X_K|D_K, \bullet)}{z^q_{\sW_K,e-1}(X_K|D_K, \bullet)}
\end{equation}
is zero. On the other hand, each $\phi^*_{L_{\rm gen}} \circ \phi_{L_{\rm gen} *}$ factors as
\[
\frac{z^q_{\sW_K,e}(X_K|D_K, \bullet)}{z^q_{\sW_K,e-1}(X_K|D_K, \bullet)} \xrightarrow{\phi_{L_{\rm gen} *}} \frac{z^q_{\phi_{L_{\rm gen}}(\sW_K),e'}(\P^d_K|H_{\rm gen}, \bullet)}{z^q_{\phi_{L_{\rm gen}}(\sW_K),e'-1}(\P^d_K|H_{\rm gen}, \bullet)} \xrightarrow{\phi^*_{L_{\rm gen}}} \frac{z^q_{\sW_K,e}(X_K|D_K, \bullet)}{z^q_{\sW_K,e-1}(X_K|D_K, \bullet)}
\]
for some $e'$ (see \cite[\S~6C]{KP}). It follows from \corref{cor:moving-PS-Gen} that the middle complex is acyclic. This in turn implies that $\phi^*_{L_{\rm gen}} \circ \phi_{L_{\rm gen} *} = 0$ is zero on the level of homology. Combining this with ~\eqref{eqn:Levine-box-0}, we conclude that ${\rm pr}^*_{K/k}$ is zero on the level of homology. By \propref{prop:spread},  the complex $\frac{z^q_{\sW,e}(X|D, \bullet)}{z^q_{\sW,e-1}(X|D, \bullet)}$ is acyclic. This finishes the proof of the theorem.
\end{proof}

\section{Applications and remarks}\label{sec:sharp-0}
In this section, we apply our moving lemma to prove certain contravariant 
functoriality for higher Chow groups with modulus. We prove a
vanishing theorem on higher Chow groups with ample modulus.
We end the section by 
explaining why the very ampleness condition is crucial for proving the moving 
lemma.

\subsection{Contravariance}\label{sec:contraV}
Let $X$ be a quasi-projective scheme over a field $k$ and let 
$D \subset X$ be a very ample effective Cartier divisor. 
Recall from 
\cite[Theorem~3.12]{KP-1} if that $X$ is smooth, there is a cap product
$
\cap_X: \CH^q(X,p) \otimes_{\mathbb{Z}} \CH^{q'}(X|D,p') \to 
\CH^{q + q'}(X|D, p+p').
$
We prove the following contravariant functoriality for cycles with modulus. 

\begin{thm}\label{thm:PB-ample}
Let $f: Y \to X$ be a morphism of quasi-projective schemes over a field $k$, 
where $X$ is projective over $k$. Let $D \subset X$ be a very ample 
effective Cartier divisor such that $X \setminus D$ is smooth over $k$. 
Suppose that $f^*(D)$ is a Cartier divisor on $Y$
(i.e., no minimal or embedded component of $Y$ maps into $D$). 
Then, there exists a map
\[
f^*: z^q(X|D, \bullet) \to z^q(Y|f^*(D), \bullet)
\]
in the derived category
of abelian groups. In particular, there is a pull-back
$f^*: \CH^q(X|D, p) \to \CH^q(Y|f^*(D), p)$ for every $p,q \ge 0$. 

If $X$ and $Y$ are smooth and projective, then for every 
$a \in \CH^*(Y,\bullet)$ and $b \in \CH^*(X|D,\bullet)$, 
there is a projection formula $f_*(a \cap_Y f^*(b)) = f_*(a) \cap_X b$.
\end{thm}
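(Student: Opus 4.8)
The plan is to reduce the construction of $f^*$ to the case where $f$ is a linear projection composed with a closed embedding, using the moving lemma \thmref{thm:Main-moving} to replace an arbitrary admissible cycle on $X$ by one that pulls back correctly. First I would observe that the problem is local on the target in the following sense: since $D$ is very ample, fix a closed embedding $\iota_X : X \inj \P^N_k$ with $D = \iota^*_X(H)$ for a hyperplane $H$. The hypothesis that $f^*(D)$ is a Cartier divisor on $Y$ means no minimal or embedded component of $Y$ maps into $D$, so the composite $Y \xrightarrow{f} X \inj \P^N_k$ has image meeting $\A^N_k = \P^N_k \setminus H$ in a dense open of every component of $Y$. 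The key point is that an admissible cycle in $z^q(X|D,\bullet)$ is supported entirely over $X \setminus D$ (by the modulus condition, as recalled after \propref{prop:CL*}), so we are really trying to pull back cycles living over the open complement, where $f$ is flat-ish only after moving.

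The main construction goes as follows. Let $\sW = \sW_f$ be the (finite) collection of locally closed subsets of $X$ consisting of the images under $f$ of the irreducible components of $Y$ and their intersections with the faces $X \times F$; more precisely, choose a stratification so that the fibers of $f$ over each stratum have constant dimension. By \thmref{thm:Main-moving}, the inclusion $z^q_{\sW}(X|D,\bullet) \inj z^q(X|D,\bullet)$ is a quasi-isomorphism, so it suffices to define $f^*$ on the subcomplex $z^q_{\sW}(X|D,\bullet)$ of cycles meeting each $W \times F$ properly. For such a cycle $Z$, the set-theoretic (or scheme-theoretic, with multiplicities via the usual intersection-theoretic formula) preimage $(f \times \id)^{-1}(Z) \subset Y \times \square^n$ has the correct codimension on each face, because $Z$ meets $W \times F$ properly and $f$ has equidimensional fibers over the strata. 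That this preimage cycle satisfies the modulus condition relative to $f^*(D)$ is the content of the containment lemma combined with the pullback-of-divisors inequality: if $\nu : \ov{V}^N \to \ov{Z}$ is the normalization of an irreducible component of the closure and $\tilde{\nu}$ the analogous map for the preimage, then $\tilde{\nu}^*((f\times\id)^* (D \times \ov{\square}^n)) \le \tilde{\nu}^*((f\times\id)^*(X \times F^\infty_n))$ follows by pulling back the corresponding inequality for $Z$ along the dominant map $\ov{V}^N_{\text{preimage}} \to \ov{V}^N_Z$ and invoking \lemref{lem:cancel}. This produces a map of complexes $f^* : z^q_{\sW}(X|D,\bullet) \to z^q(Y|f^*(D),\bullet)$, hence a map in the derived category after inverting the quasi-isomorphism; well-definedness (independence of the choice of $\sW$ and of the moved representative) is checked by the standard argument: two choices are compared through a common refinement, and the chain homotopy produced in the proof of \thmref{thm:moving-PS} (or rather its incarnation in \thmref{thm:Main-moving}) is itself supported over $X \setminus D$, so it pulls back.

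For the projection formula in the smooth projective case, I would argue directly on representatives. Given $a \in \CH^*(Y,\bullet)$ and $b \in \CH^*(X|D,\bullet)$, choose (using Bloch's moving lemma for $Y$ and our \thmref{thm:Main-moving} for $X$) representatives meeting all the relevant loci properly, so that $f^*(b)$, $a \cap_Y f^*(b)$, $f_*(a)$ and $f_*(a) \cap_X b$ are all defined on the nose by the proper-intersection formulas. Then the identity $f_*(a \cap_Y f^*(b)) = f_*(a) \cap_X b$ is the usual projection formula for Chow cycles — it holds at the level of cycle classes on the open complements $Y \setminus f^*(D)$ and $X \setminus D$ by \cite[Theorem~3.12]{KP-1} or the classical computation with correspondences — and since both sides are admissible cycles with the specified modulus, supported over these complements, the equality on the open part forces equality in $\CH^*(X|D,\bullet)$ by \lemref{lem:cancel} once more. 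I expect the main obstacle to be the \emph{functoriality and well-definedness} bookkeeping: verifying that the map in the derived category does not depend on the auxiliary stratification $\sW$ and is compatible with composition $(g\circ f)^* = f^* \circ g^*$, since each $f^*$ is only defined after moving, and one must chase the chain homotopies through two applications of the moving lemma while checking that everything stays supported over the smooth loci where the modulus condition is automatic.
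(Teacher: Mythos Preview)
Your approach is essentially the same as the paper's: stratify $X$ by the fiber dimension of $f$ (the paper takes $X_i = \{x \in X : \dim f^{-1}(x) \ge i\}$ and lets $\sW$ be the irreducible components of the $X_i$), invoke \thmref{thm:Main-moving} to replace $z^q(X|D,\bullet)$ by $z^q_{\sW}(X|D,\bullet)$, and then pull back directly---the paper simply cites \cite[Theorem~7.1]{KP} for the fact that $f^*$ is defined on this subcomplex rather than sketching the modulus check as you do. Your opening sentence about reducing to a linear projection is a false start that you never actually use, and your worries about well-definedness and $(g\circ f)^* = f^*\circ g^*$ are not addressed in the paper either; otherwise the projection-formula argument (move $b$ via \thmref{thm:Main-moving}, then move $a$ via Bloch's moving lemma, then compare cycle-level identities) matches the paper's proof.
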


\begin{proof}
The proof is a standard application of moving lemma for Chow groups. Set $E = f^*(D)$. For $0 \le i \le \dim(Y)$, let $X_i$ be the set of points $x \in X$ such that $\dim(f^{-1}(x)) \ge i$, where we assume $\dim(\emptyset) = -1$. Let $\sW$ be the collection of the irreducible components of all $X_i$. One checks that $\sW$ is a finite collection and the pull-back $f^*:z^q_{\sW}(X|D,\bullet) \to z^q(Y|E, \bullet)$ is defined (see \cite[Theorem~7.1]{KP}). 
We thus have maps $z^q (X|D, \bullet) \overset{q.iso}{\leftarrow} z^q _{\mathcal{W}} (X|D, \bullet) \overset{f^*}{\to} z^q (Y|E, \bullet)$ and 
\thmref{thm:Main-moving} says that the arrow on the left is a 
quasi-isomorphism. This proves the first part of the theorem.

To prove the projection formula, we can assume using \thmref{thm:Main-moving} that $b \in \CH^*(X|D,\bullet)$ is represented by a cycle $Z \in z^q_{\sW}(X|D,\bullet)$, where $\sW$ is as constructed above. By \cite[Lemma~3.10]{KP-1}, there is a finite collection of locally closed
subsets $\sC$ of $Y$ such that $Z' \boxtimes f^*(Z) \in z^q_{\Delta_Y}(Y|E,\bullet)$ for all $Z' \in z^q_{\sC}(Y,\bullet)$. By the moving lemma for Bloch's higher Chow groups, we can assume that $a \in \CH^*(Y,\bullet)$ is represented by a cycle $Z' \in z^q_{\sC}(Y,\bullet)$. In this case, it is straightforward to check that $f_*(Z') \boxtimes Z \in z^q_{\Delta_X}(X|D,\bullet)$ and $f_* \circ \Delta^*_Y(Z' \boxtimes f^*(Z)) = \Delta^*_X(f_*(Z') \boxtimes Z)$. This finishes the proof. 
\end{proof}

\begin{remk}\label{remk:PB-ample-0}
We remark that a pull-back map on higher Chow groups with modulus was constructed in \cite[Theorem~4.3]{KP-1}. But \thmref{thm:PB-ample}  
can not be deduced from \cite[Theorem~4.3]{KP-1}. The reason is that we make no assumption on the map $f$ while \emph{loc. cit.} assumes $D$ and $E$ to be the pull-backs of a divisor on a base scheme $S$ over which both $X$ and $Y$ should be smooth.

We also remark that \thmref{thm:PB-ample} proves a stronger statement than
giving a pull-back map on the higher Chow groups with modulus. 
This stronger version of \cite[Theorem~4.3]{KP-1} is not yet known.
\end{remk}

\begin{cor}\label{cor:PB-ample-1}
Let $r \ge 1$ be an integer and let $f: Y \to \P^r_k$ be a morphism of 
quasi-projective schemes over a field $k$. Let $D \subset \P^r_k$ be an 
effective Cartier divisor such that $f^*(D)$ is a Cartier divisor on $Y$. 
Then, there exists a pull-back $f^*: \CH^q(\P^r_k|D, p) \to \CH^q(Y|f^*(D), p)$ 
for every $p,q \ge 0$. 

If $Y$ is also smooth and projective, then for every $a \in \CH^*(Y,\bullet)$ 
and $b \in \CH^*(\P^r_k|D,\bullet)$, there is a projection formula 
$f_*(a \cap_Y f^*(b)) = f_*(a) \cap_X b$.
\end{cor}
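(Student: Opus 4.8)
The plan is to run the proof of \thmref{thm:PB-ample} essentially unchanged, writing $X=\P^r_k$; the only new ingredient is that when $X=\P^r_k$ the moving lemma for cycles with modulus is available for an \emph{arbitrary} effective Cartier divisor $D$, not just a very ample one. Indeed, $X=\P^r_k$ is smooth, so $X\setminus D$ is automatically smooth, and \corref{cor:Proj-all} provides, for every finite collection $\sW$ of locally closed subsets of $X$, a quasi-isomorphism $z^q_{\sW}(X|D,\bullet)\inj z^q(X|D,\bullet)$. (One way to obtain \corref{cor:Proj-all} is to compose $f$ with an $m$-uple Veronese embedding $v_m\colon\P^r_k\inj\P^M_k$, where $m=\deg(D)$: then $D=v_m^*(H)$ for a hyperplane $H$ which restricts to a very ample divisor on the smooth variety $v_m(\P^r_k)$, so \thmref{thm:Main-moving} applies there; one could equally carry out the argument below directly after this re-embedding.)

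First I would set $E=f^*(D)$ and, for $0\le i\le\dim(Y)$, let $X_i\subset X$ be the closed set of points $x$ with $\dim(f^{-1}(x))\ge i$ (with $\dim(\emptyset)=-1$), and let $\sW$ be the finite collection of irreducible components of the $X_i$. By \cite[Theorem~7.1]{KP} the pull-back $f^*\colon z^q_{\sW}(X|D,\bullet)\to z^q(Y|E,\bullet)$ is defined; this step is a transversality count against the fiber-dimension stratification of $f$ and uses nothing about $D$ beyond the fact that $f^*(D)$ is Cartier, so that $z^q(Y|E,\bullet)$ makes sense. Composing with a homotopy inverse of the quasi-isomorphism of \corref{cor:Proj-all} then yields a map $f^*\colon z^q(X|D,\bullet)\to z^q(Y|E,\bullet)$ in the derived category of abelian groups, and in particular the asserted pull-back $f^*\colon\CH^q(\P^r_k|D,p)\to\CH^q(Y|f^*(D),p)$.

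For the projection formula I would assume in addition that $Y$ is smooth and projective, so that $f_*$ is defined and both $a\cap_Y f^*(b)$ and $f_*(a)\cap_X b$ make sense, the cap products existing because $Y$ and $X=\P^r_k$ are smooth. Using \corref{cor:Proj-all}, represent $b\in\CH^*(X|D,\bullet)$ by a cycle $Z\in z^q_{\sW}(X|D,\bullet)$ with $\sW$ as above; by \cite[Lemma~3.10]{KP-1} there is a finite collection $\sC$ of locally closed subsets of $Y$ such that $Z'\boxtimes f^*(Z)\in z^q_{\Delta_Y}(Y|E,\bullet)$ for all $Z'\in z^q_{\sC}(Y,\bullet)$; and by Bloch's moving lemma \cite{Bl-1}, represent $a\in\CH^*(Y,\bullet)$ by a cycle $Z'\in z^q_{\sC}(Y,\bullet)$. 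Then, exactly as in the proof of \thmref{thm:PB-ample}, one checks that $f_*(Z')\boxtimes Z\in z^q_{\Delta_X}(X|D,\bullet)$ and that $f_*\circ\Delta^*_Y(Z'\boxtimes f^*(Z))=\Delta^*_X(f_*(Z')\boxtimes Z)$, which is the projection formula.

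I do not expect a genuine obstacle: the substance of the corollary is already contained in \thmref{thm:PB-ample} together with \corref{cor:Proj-all}, the latter being exactly what removes the very ampleness hypothesis in the case of projective space. The one point deserving a line of care is checking that the construction of $f^*$ on the moved subcomplex in \cite[Theorem~7.1]{KP} does not tacitly use very ampleness of $D$; it does not, since that construction depends only on the fiber-dimension stratification of $f$ and on $f^*(D)$ being a Cartier divisor on $Y$.
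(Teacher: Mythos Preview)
Your argument is correct, but the paper's proof is considerably shorter and rests on an observation you do not make explicit: on $\P^r_k$ every nonzero effective Cartier divisor is already very ample, since $\Pic(\P^r_k)\cong\Z$ is generated by $\sO(1)$ and an effective divisor of degree $m\ge 1$ corresponds to $\sO(m)$. Thus the paper simply splits into two cases: if $D=0$ one is in the setting of ordinary higher Chow groups and Bloch's moving lemma gives the pull-back; if $D\neq 0$ then $D$ is very ample and \thmref{thm:PB-ample} applies verbatim. No separate appeal to \corref{cor:Proj-all} and no re-running of the proof of \thmref{thm:PB-ample} is needed.

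Your route---invoking \corref{cor:Proj-all} and then repeating the construction of $f^*$ and the projection-formula argument---also works, and your aside about the Veronese embedding is in fact the same observation in disguise (pulling $D$ back from a hyperplane via $v_m$ is exactly saying $\sO(D)\simeq\sO(m)$ is very ample). What the paper's approach buys is economy: one sentence instead of a page. What your approach buys is that it would generalize to any target $X$ for which the moving lemma with arbitrary modulus is known, without needing the divisor to be very ample; but at present $\P^r_k$ is essentially the only such $X$, and there precisely because of the very-ampleness fact above.
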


\begin{proof}If $D=0$, then it is just an application of the moving lemma for 
usual higher Chow groups. If $D \not = 0$, then it is very ample so that 
Theorem \ref{thm:PB-ample} applies.
\end{proof}

\subsection{A vanishing theorem}\label{sec:vanishing}
The following result shows that the higher Chow groups of projective schemes 
(not necessarily smooth) 
with ample modulus are nontrivial only in high codimension. More precisely,

\begin{thm}\label{thm:small}
Let $X$ be a projective scheme of dimension $d \ge 1$ over a field $k$. 
Let $D \subset X$ be an ample effective Cartier divisor. Then, 
$z_s(X|D, p) = 0$ for 
$s > 0$. In particular, $\CH_s(X|D, p) = 0$ for $s > 0$.
\end{thm}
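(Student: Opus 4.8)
The plan is to reduce the vanishing of $z_s(X|D,p)$ for $s>0$ to a dimension count using the modulus condition, exploiting that an admissible cycle must avoid $D\times\square^p$ while its closure must satisfy the inequality $\nu_V^*(D\times\ov\square^p)\le\nu_V^*(X\times F_p^\infty)$ on the normalization. First I would fix an integral closed subscheme $V\subset X\times\square^p$ of dimension $s+p$ satisfying the face and modulus conditions, let $\ov V\subset X\times\ov\square^p$ be its closure, and let $\nu_V:\ov V^N\to X\times\ov\square^p$ be the normalization followed by the inclusion. The key point is that $D$ is ample, so some positive multiple $mD$ is very ample; replacing $D$ by a large enough multiple only shrinks the admissible cycle groups, but actually we want the opposite direction, so instead I would argue directly with the ample divisor: since $D$ is ample, $D\times\ov\square^p$ is not disjoint from any closed subvariety of $X\times\ov\square^p$ of positive dimension that is not contained in a fiber $\{pt\}\times\ov\square^p$; more precisely, the pullback $\nu_V^*(D\times\ov\square^p)$ is a nonzero effective divisor on $\ov V^N$ unless $\ov V$ maps to a point of $X$.

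The main step is the following dichotomy. Either $\ov V$ projects to a single closed point of $X$ — in which case $\dim\ov V\le p$, forcing $s\le 0$, contradiction — or the image of $\ov V$ in $X$ is positive-dimensional, and then by ampleness of $D$ the divisor $\nu_V^*(D\times\ov\square^p)$ is a \emph{nonzero} effective Cartier divisor on $\ov V^N$ whose support meets $\nu_V^{-1}(X\times\square^p)$, i.e.\ does not sit entirely over $F_p^\infty$. I would make this precise by choosing a general point of the image of $\ov V$ in $X$ through which a member of $|D|$ passes, or equivalently by noting that $\ov V\to X$ dominant onto a positive-dimensional subvariety $B$ implies $B\cap D\ne\emptyset$ (ampleness) and $B\not\subset D$ is automatic since $V$ avoids $D$; so $\nu_V^*(D\times\ov\square^p)$ has a component lying over $B\cap D\times\square^p$-ish, hence not supported on $\nu_V^{-1}(X\times F_p^\infty)$. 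But the modulus condition demands $\nu_V^*(D\times\ov\square^p)\le\nu_V^*(X\times F_p^\infty)$, so every component of the left side must be supported over $F_p^\infty$ — a contradiction. Hence no such $V$ exists and $z_s(X|D,p)=0$.

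I would organize the argument by first disposing of the case $\dim(\pr_X(\ov V))=0$ by the elementary dimension inequality $\dim\ov V\le\dim\pr_X(\ov V)+p$, then in the remaining case carefully extracting a component of $\nu_V^*(D\times\ov\square^p)$ that dominates $B\cap D$ (using that $D\times\ov\square^p$ pulls back to a genuine Cartier divisor because $V\not\subset D\times\ov\square^p$, indeed $V\cap(D\times\square^p)=\emptyset$), and checking this component cannot be contained in $\nu_V^{-1}(X\times F_p^\infty)$ because its image in $X$ is contained in $D$ which is disjoint from — wait, rather because a generic point of that component lies over $X\times\square^p$. The statement $\CH_s(X|D,p)=0$ is then immediate since the whole complex $z_s(X|D,\bullet)$ is zero.

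The hard part will be the bookkeeping in the positive-dimensional case: ensuring that $\nu_V^*(D\times\ov\square^p)$ genuinely has a component meeting $\nu_V^{-1}(X\times\square^p)$ and not just sitting over the boundary. The clean way is to pick a closed point $y\in B\cap D$ and a closed point of $V$ lying over $y$ with $\square^p$-coordinate in the open cube (possible since $V$ is dense in $\ov V$ and avoids $D\times\square^p$ — actually $V$ avoids $D$ entirely, so I should instead pick a point of $\ov V$ over $y$ with affine cube coordinates, which exists as long as $\ov V$ is not contained in $X\times F_p^\infty$, and it is not since $V$ is nonempty in $X\times\square^p$); then the local equation of $D$ pulls back to a nonunit at the corresponding point of $\ov V^N$, giving an effective divisor component there, contradicting the modulus inequality whose right-hand side is supported on $F_p^\infty$. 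This uses \propref{prop:CL*} and \lemref{lem:cancel} only implicitly; the essential input is ampleness of $D$ forcing $B\cap D\ne\emptyset$ for every positive-dimensional $B\subseteq X$.
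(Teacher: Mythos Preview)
Your Case 2 does not go through. You want a component of $\nu_V^*(D\times\ov\square^p)$ that meets $\nu_V^{-1}(X\times\square^p)$, and you try to produce it by picking $x\in B\cap D$ and then a point of $\ov V$ over $x$ with affine cube coordinates. But no such point exists: since $V=\ov V\cap(X\times\square^p)$ and $V\cap(D\times\square^p)=\emptyset$, every point of $\ov V$ lying over $x\in D$ already has its cube coordinate in $F_p^\infty$. Thus $\ov V\cap(D\times\ov\square^p)\subset D\times F_p^\infty$, so the support of $\nu_V^*(D\times\ov\square^p)$ is automatically contained in $\nu_V^{-1}(X\times F_p^\infty)$. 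There is nothing to contradict; the modulus inequality is perfectly consistent with $\dim B>0$, and your dichotomy gives no information in this branch. (Your parenthetical ``which exists as long as $\ov V$ is not contained in $X\times F_p^\infty$'' is the false step: that global non-containment says nothing about the particular fiber over $x\in D$.)

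The paper avoids this by projecting the other way. For any point $y\in\square^p$, the fiber $V_y$ is closed in $X_{k(y)}$, hence projective over $k(y)$; but $V_y\cap D_{k(y)}=\emptyset$, and since $D$ is ample, $X\setminus D$ is affine (embed $X\hookrightarrow\P^N$ with $nD=X\cap H$ for a hyperplane $H$ and note $X\setminus D\subset\P^N\setminus H=\A^N$). A projective scheme inside an affine scheme is finite, so $V_y$ is $0$-dimensional for every $y$. Hence $V\to\square^p$ is quasi-finite and projective, thus finite, giving $\dim V\le p$ and $s\le0$. Your Case 1 is a special instance of this; the point is that the fiberwise-over-$\square^p$ argument handles everything at once.
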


\begin{proof}
We can find a closed embedding $\iota_X: X \inj \P^N_k$ and a hyperplane 
$H \inj \P^N_k$ such that $nD = \iota^*_X(H)$ for some $n \gg 0$. 
Suppose $z_s(X|D, p) \not = 0$ for some $s \in \Z$. Let 
$\alpha \in z_s(X|D,p)$ be a nonzero admissible cycle and let $Z$ be an 
irreducible component of $\alpha$. Let 
${\rm pr}_{\P^N_k}: \P^N_k \times \square^p_k \to \P^N_k$ and 
${\rm pr}_{\square^p_k}: \P^N_k \times \square^p_k \to \square^p_k$ denote the 
projection maps. Let $y \in \square^p_k$ be any scheme point. For any map 
$W \to \square^p_k$, let $W_y$ denote the fiber $\Spec(k(y))  
\times_{\square^p_k} W$ over $y$. The modulus condition for $Z$ implies that 
$Z_y$ is a closed subscheme of $\P^N_y$ disjoint from $H_y$. In particular, 
$Z_y$ is a projective $k(y)$-scheme which is a closed subscheme of 
$(\P^N_y \setminus H_y) \simeq \A^N_{k(y)}$. Hence, it must be finite. We have 
thus shown that the projection map $Z \to \square^p_k$ is projective and 
quasi-finite, and hence finite. In other words, we must have 
$\dim(Z) = s+p \le p$, i.e., $s \le 0$. Thus $z_s(X|D, p ) = 0$ if $s > 0$, as 
desired.
\end{proof}

\subsection{Sharpness of the very ampleness condition}\label{sec:sharp}
We now show by an example that we can not weaken the very ampleness condition
to mere ampleness for the modulus divisor $D \subset X$. It also shows that the moving lemma for cycles with modulus on smooth affine schemes can not be proven using the method of linear projections, in general. 
This partly explains the need for the Nisnevich sheafification of the cycle
complex for the moving lemma of W. Kai \cite{Kai}.

Let $X$ be an elliptic curve over an algebraically closed field $k$ and let $D \subset X$ be a closed point. It is clear that $\sO_X(D)$ is ample. 
We claim that there exists no pair $(f, D')$ consisting of a map $f:X \to \P^1_k$ and an effective Cartier divisor $D' \in {\rm Div}(\P^1_k)$ such that $D = f^*(D')$.

Suppose there does exist such a pair $(f, D')$. Observe that we must have $d := {\rm deg}(D') > 0$ and $D'$ is very ample. Let $\iota: \P^1_k \inj \P^d_k$ denote the closed embedding such that $\sO_{\P^1_k}(D') \simeq \iota^*(\sO_{\P^d_k}(1))$. This gives a regular map $\iota \circ f: X \to \P^d_k$ such that $(\iota \circ f)^*(\sO_{\P^d_k}(1)) = \sO_X(D)$. This implies that $\sO_X(D)$ is globally generated. However, by Riemann-Roch, one checks immediately that $h^0 (D) = 1$ in our case, i.e., ${\rm dim}(|D|) = 0$ and the unique element of $|D|$ vanishes at $D$, a contradiction.

Recall that the only technique yet available in the literature to prove the moving lemma for Bloch's higher Chow groups of smooth affine schemes is the method of \emph{linear projections}. Bloch proved the moving lemma for higher Chow groups of all smooth quasi-projective schemes (see \cite{Bl-2} and \cite[Proposition~2.5.2]{Bl-3}). But his proof depends on the moving lemma for smooth affine schemes proven in \cite{Bl-1} using linear projections.

Let us now consider the case of moving lemma for higher Chow groups with modulus on smooth affine schemes. Let $U$ be a smooth affine scheme over an algebraically closed field $k$ of characteristic zero. Let $D \subset U$ be a principal effective divisor $(u)$ such that the induced map $u: U \setminus D \to \A^1_k$ is smooth. We use the above example to show that even in this special case, the method of linear projections can not be used to prove the moving lemma for the higher Chow groups on $U$ with modulus $D$.  This makes proving the moving lemma for cycles with modulus on smooth affine or projective schemes very subtle and challenging.

Let $X$ be an elliptic curve over $k$ as above and let $D \inj X$ be a closed point. There exists an affine neighborhood $V \inj X$ of $D$ such that $D = (u)$ is principal on $V$. Let $u: V \to \A^1_k$ be the induced dominant map. We can find an affine neighborhood $U \inj V$ of $D$ such that $u: U \setminus D \to \A^1_k$ is {\'e}tale.

\begin{prop}\label{prop:sharpness-1}
There exists no pair $(f, D')$ consisting of a finite map $f:U \to \A^1_k$ and effective Cartier divisor $D' \inj \A^1_k$ such that $D = f^*(D')$.
\end{prop}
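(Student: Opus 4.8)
The plan is to imitate the argument given above in the projective case (which reduced the $\P^1$-analogue to the fact that $h^0(X,\sO_X(D))=1$), while paying attention to the fact that $U$ is only open in $X$. Suppose for contradiction that such a pair $(f,D')$ exists. Since $f^*(D')=D\neq 0$ we have $D'\neq 0$, so $m:=\deg(D')\ge 1$. Viewing the effective Cartier divisor $D'$ as a $0$-dimensional closed subscheme of $\A^1_k$, its closure $\ov{D}'$ in $\P^1_k$ is the same subscheme, now disjoint from the point $\infty:=\P^1_k\setminus\A^1_k$, and $\sO_{\P^1_k}(\ov{D}')\simeq\sO_{\P^1_k}(m)$. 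Because $X$ is a smooth projective curve and $\P^1_k$ is complete, the morphism $f:U\to\A^1_k\inj\P^1_k$ extends uniquely to a morphism $\ov{f}:X\to\P^1_k$. Since $f$ is finite and $U$ is a curve, $f$ is surjective onto $\A^1_k$, so $\ov{f}$ is non-constant, hence a finite morphism of smooth projective curves; writing $n:=\deg(\ov{f})$, we have $n\ge 2$ because $X$, being an elliptic curve, is not isomorphic to $\P^1_k$.

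The key step is to show that $U=\ov{f}^{-1}(\A^1_k)$, equivalently $\ov{f}^{-1}(\infty)=X\setminus U$. This is exactly where the hypothesis that $f$ is \emph{finite} (and not merely quasi-finite) is used: the open immersion $U\inj\ov{f}^{-1}(\A^1_k)$ is a morphism over $\A^1_k$ whose source is proper over $\A^1_k$ (as $f$ is finite) and whose target is separated over $\A^1_k$ (as $\ov{f}$ is finite, hence affine), so the open immersion is itself proper; a proper open immersion into the connected scheme $\ov{f}^{-1}(\A^1_k)$ must be an isomorphism. Granting this, every fibre $\ov{f}^{-1}(Q)$ with $Q\in\A^1_k$ is contained in $U$, so the divisor $\ov{f}^*(\ov{D}')$ is supported in $U$ and restricts there to $f^*(D')=D$; hence $\ov{f}^*(\ov{D}')=D$ in $\Div(X)$.

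Taking degrees now gives $1=\deg(D)=\deg(\ov{f})\cdot\deg(\ov{D}')=nm\ge 2$, a contradiction. Equivalently, one gets $\sO_X(D)\simeq\ov{f}^*\sO_{\P^1_k}(m)$, a globally generated line bundle of positive degree with a one-dimensional space of global sections, which is impossible since its unique nonzero section vanishes at the point $D$ — the same contradiction as in the projective case above.

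I expect the identification $U=\ov{f}^{-1}(\A^1_k)$ to be the only delicate point; the remainder is bookkeeping with degrees and linear equivalence on the elliptic curve. It is worth emphasising that this step genuinely fails for the étale map $u:U\setminus D\to\A^1_k$ of the construction preceding the proposition, precisely because $u$ is only quasi-finite — which is consistent with, and explains, why $u$ itself does not contradict the statement.
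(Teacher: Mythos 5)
Your proof is correct and follows essentially the same route as the paper: extend $f$ to a finite morphism $\ov{f}\colon X \to \P^1_k$, identify $U$ with $\ov{f}^{-1}(\A^1_k)$ (the paper phrases this as the square being Cartesian, a step you justify in more detail via the proper-open-immersion argument), and conclude that $D$ is the pullback of an effective Cartier divisor on $\P^1_k$ under a morphism from $X$, which is impossible. The only cosmetic difference is the final contradiction: the paper quotes its earlier observation that such a pullback would force $\sO_X(D)$ to be globally generated while $h^0(X,\sO_X(D))=1$, whereas you close with the equivalent and slightly more elementary degree count $1=\deg(\ov{f})\cdot\deg(\ov{D}')\ge 2$.
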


\begin{proof}
If such pair $(f, D')$ exists, then we get a commutative diagram
\begin{equation}\label{eqn:sharpness-2}
\xymatrix@C1pc{
U \ar[r]^{j'} \ar[d]_{f} & X \ar[d]^{f'} \\
\A^1_k \ar[r]^{j} & \P^1_k,}
\end{equation}
where the horizontal maps are open inclusions and vertical maps are finite. This finiteness implies that the above square is Cartesian. This in turn implies that we have a finite map $f': X' \to \P^1_k$ and effective Cartier divisor $D' \inj \P^1_k$ such that $D = f'^*(D')$ on $X$. But we have shown previously that this is not possible.
\end{proof}

\section{Higher Chow groups with modulus of a line bundle}\label{sec:H-inv}
Let $X$ be a quasi-projective scheme of dimension $d \ge 0$ over a field $k$. 
Let $f: \sL \to X$ be a line bundle and let $\iota: X \inj \sL$ be the 
$0$-section embedding. In this case, one knows that there is an isomorphism 
$\iota^*: \CH_*(\sL, \bullet) \xrightarrow{\simeq} \CH_*(X, \bullet)$ 
(up to a shift in dimension) of ordinary higher groups. 
Since the Chow groups with modulus are supposed to be the `relative
motivic cohomology' of the pair $(\sL, \iota(X))$, one expects 
$\CH_*(\sL|X, \bullet)$ to be trivial.

As an application of the moving techniques of \S~\ref{sec:PSM}, we show
in this section that every cycle in $z_s(\sL|X, \bullet)$ can be moved to a 
trivial cycle
so that this complex is acyclic. This gives an evidence in support of the
expectation that the Chow groups with modulus are the relative motivic
cohomology. It also provides examples where the higher Chow
groups of a variety with a modulus in an effective Cartier divisor are 
all zero. Note that this can never happen for the ordinary higher groups. 
The proof closely follows the arguments of 
Lemmas \ref{lem:Blow-up}, \ref{lem:Blow-up-Final}, and 
Proposition \ref{prop:moving-mod}.

Let $H: \sL \times \A^1_k \to \sL$ be the standard fiberwise contraction given explicitly as follows: for an affine open subset $U = \Spec(R) \subset X$ such that $f|_U$ is trivial, i.e., of the form $f|_U: U \times \A^1_k \to U$, write $\sL|_U = \Spec(R[t])$. Then, $H|_U: U \times \A^1_k \times \A^1_k \to U \times \A^1_k$ is induced by the polynomial map $R[x] \to R[t, x]$, given by $x \mapsto tx$.

For $n \ge 0$, let $H_n: \sL \times \A^1_k \times \ov{\square}^n_k \to \sL \times \ov{\square}^n_k$ be the map $H \times {\rm Id}_{\ov{\square}^n_k}$. For any irreducible closed admissible cycle $V \in z_s(\sL|X, n)$, let $H^*(V)$ denote the cycle associated to the flat pull-back $H_n^{-1}(V)$. Set $V' = (H^*(V))_{\rm red}$. We extend $H^*$ linearly to all cycles. Let $\ov{V} \inj \sL \times \ov{\square}^n_k$ denote the closure of $V$ and let $\nu_V: \ov{V}^N \to \sL \times \ov{\square}^n_k$ be the composition of the normalization and the inclusion. Let $\ov{V}'$ denote the closure of $V'$ in $\sL \times \ov{\square}^{n+1}_k$ and let $\nu_{V'}: {\ov{V}'}^N \to \sL \times \ov{\square}^{n+1}_k$ denote the map induced by the normalization of $\ov{V}'$.

\begin{lem}\label{lem:H-inv-1}
$V' \inj \sL \times {\square}^{n+1}_k$ has modulus $X$.
\end{lem}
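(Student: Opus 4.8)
The plan is to mimic the strategy of Proposition~\ref{prop:moving-mod}: the contraction $H$ is not a flat morphism globally, so I will resolve it by a blow-up of $\sL \times \ov{\square}^1_k$ along the locus at infinity, obtain the boundary divisor formula analogous to Lemma~\ref{lem:Blow-up-Final}, and then pull back the modulus inequality through the resolution and descend it via Lemma~\ref{lem:cancel}. First I would work locally over an affine open $U = \Spec(R) \subset X$ trivializing $f$, so that $\sL|_U = \Spec(R[t])$ and $H|_U$ is induced by $x \mapsto tx$. The rational map we must resolve is $H: \ov{\sL} \times \ov{\square}^1_k \dashrightarrow \ov{\sL}$ where $\ov{\sL} = \P(\sL \oplus \sO_X)$ is the projective completion of the line bundle (with $X_\infty$ the divisor at infinity), and the indeterminacy sits over $X_\infty \times \{\infty\}$. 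I would blow up $\ov{\sL} \times \ov{\square}^1_k$ along $X_\infty \times \{\infty\}$, exactly as in Lemma~\ref{lem:Blow-up}, and check in the two standard charts (the chart where $t$ is finite and the chart at $t = \infty$ on $\ov{\square}^1_k$) that the lifted map $\ov{H}$ becomes regular on the complement of a codimension-$\ge 2$ locus; one then further blows up (or passes to the normalization of the graph) to get an honest morphism $\ov{H}\colon \widetilde{\Gamma}\to \ov{\sL}$ fitting in a commutative diagram as in Lemma~\ref{lem:Blow-up-Final}, with an exceptional divisor $E'$ satisfying $\delta^*(X_\infty \times \ov{\square}^1_k) = (X_\infty \times \ov{\square}^1_k) + E'$ and $\delta^*(\ov{\sL} \times \{\infty\}) = (\ov{\sL} \times \{\infty\}) + E'$.

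Granting that resolution, the modulus computation is the formal analogue of the chain of inequalities in the proof of Proposition~\ref{prop:moving-mod}. The admissibility of $V \in z_s(\sL|X,n)$ means exactly $\nu_V^*(\sL \times F^\infty_n) \ge \nu_V^*(X \times \ov{\square}^n_k)$ on $\ov{V}^N$, where here $X$ plays the role of the modulus divisor ($X = \iota(X)$ is the $0$-section, which is a Cartier divisor on $\sL$ since locally it is $\{x = 0\}$; its closure in $\ov{\sL}$ is disjoint from $X_\infty$). Let $W$ denote the strict transform in $\widetilde{\Gamma}\times \ov{\square}^n_k$ of the closure $\ov{V}'$, with normalization $\nu_W$; surjectivity of the induced map $W^N \to \ov{V}^N$ transports the modulus inequality for $V$ up to $W^N$ in the form $\nu_W^*(\widetilde{\Gamma}\times F^\infty_n) \ge \nu_W^*(\ov{H}^*(X)\times \ov{\square}^n_k)$. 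Using that $\ov{H}^{-1}(X) = (X \times \ov{\square}^1_k) + (\text{the new face } F^\infty_{n+1,n+1})$ away from the exceptional locus — this is the line-bundle analogue of part (4) of Lemma~\ref{lem:generic-pt}, and it holds because $H|_U$ sends $x \mapsto tx$, so the preimage of $\{x=0\}$ is $\{x=0\}\cup\{t=0\}$, and $\{t = 0\}$ pulls the $0$-section back to the new box coordinate's $\infty$-face after the usual coordinate swap on $\ov\square$ — I would chain the inequalities precisely as in Proposition~\ref{prop:moving-mod}: $(\delta_n\circ\nu_W)^*(\sL\times F^\infty_{n+1})$ equals $\nu_W^*(\widetilde\Gamma\times F^\infty_n)$ plus $(\delta_n\circ\nu_W)^*(\sL\times\ov\square^n_k\times\{\infty\})$, which by the displayed boundary formulas and the transported inequality is $\ge (\delta_n\circ\nu_W)^*(X\times\ov\square^{n+1}_k)$. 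Finally, descending along the surjective map of normal integral schemes $W^N\to {\ov{V}'}^N$ via Lemma~\ref{lem:cancel} gives $\nu_{V'}^*(\sL\times F^\infty_{n+1})\ge \nu_{V'}^*(X\times\ov\square^{n+1}_k)$, which is precisely the statement that $V'$ has modulus $X$.

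I expect the main obstacle to be the first paragraph: verifying that the blow-up of $\ov{\sL}\times\ov{\square}^1_k$ along $X_\infty\times\{\infty\}$ resolves the fiberwise contraction to a morphism (away from codimension $\ge 2$, and then after a further modification everywhere we need), and pinning down the exceptional-divisor bookkeeping $\delta^*(X_\infty\times\ov\square^1_k) = (X_\infty\times\ov\square^1_k) + E'$ and the companion formula for $\ov{\sL}\times\{\infty\}$. In the $\P^r$-case of \S\ref{sec:PSM} this was a hands-on coordinate computation; in the line-bundle case one has to do it in the trivializing charts of $\sL$ and check the computation is compatible with the transition functions of $\sL$ (so that the local blow-ups and the local resolved maps glue). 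Since the contraction $x\mapsto tx$ is linear in the fiber coordinate, the geometry of the blow-up is in fact simpler than the $\P^r$ case — it is essentially the blow-up appearing in the deformation to the normal cone — but the need to track everything $\sL$-linearly over $X$ is where the care is required. Once the resolution and its two boundary formulas are in hand, the modulus estimate is a verbatim transcription of the proof of Proposition~\ref{prop:moving-mod} with $(\P^r_K, H_\infty)$ replaced by $(\ov{\sL}, X_\infty)$ and $\eta$-translation replaced by fiberwise contraction.
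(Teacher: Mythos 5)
Your overall strategy (compactify the fiberwise contraction, resolve it by a blow-up, transport the modulus inequality to the strict transform, and descend by Lemma~\ref{lem:cancel}) is the same as the paper's, but the geometric core is set up incorrectly. The contraction is $x\mapsto tx$, so on $\ov{\sL}\times\ov{\square}_k$ its indeterminacy locus is $(X\times\{\infty\})\cup(X_{\infty}\times\{0\})$ --- the \emph{zero} section at $t=\infty$ together with the infinity section at $t=0$ --- and not $X_{\infty}\times\{\infty\}$: in local coordinates $u=1/x$, $s=1/t$ near a point of $X_{\infty}\times\{\infty\}$ the map is $v=us$ in the target chart at infinity, hence already regular there. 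So the blow-up you propose does not resolve the map (your fallback of normalizing the graph would, but it does not repair the bookkeeping), and, more importantly, the divisor identity the argument needs is $\delta^*(X\times\ov{\square}_k)=(X\times\ov{\square}_k)+E'$ for the \emph{modulus} divisor, i.e.\ the zero section, with $E'$ lying over $X\times\{\infty\}$; your stated identity $\delta^*(X_{\infty}\times\ov{\square}_k)=(X_{\infty}\times\ov{\square}_k)+E'$ concerns the wrong divisor and cannot feed into the chain of inequalities, whose last step must convert $\nu_W^*\bigl(\text{strict transform of } X\times\ov{\square}^{n+1}_k\bigr)+\nu_W^*(E'_n)$ into $(\delta_n\circ\nu_W)^*(X\times\ov{\square}^{n+1}_k)$. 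This is exactly why, in the paper's local model, $E$ is taken to be the exceptional component over the point $(0,\infty)$, which lies on both $\{x=0\}\times\P^1_k$ and $\P^1_k\times\{\infty\}$.

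A second error: you claim $\ov{H}^{-1}(X)=(X\times\ov{\square}_k)+F^{\infty}_{n+1,n+1}$ ``after the usual coordinate swap''. Since $H^{-1}(\{x=0\})=\{x=0\}\cup\{t=0\}$, the extra component is the $0$-face of the new coordinate, not the $\infty$-face, and no coordinate swap is available: reparametrizing $t\mapsto 1/t$ would destroy the face structure of the homotopy, whose endpoints must be $t=0,1$. This discrepancy is harmless only because the extra effective term $\P^1_k\times F^{0}_{n,n+1}$ is simply dropped from the inequality, as the paper does; asserting it is part of $F^{\infty}_{n+1}$ is incorrect. Finally, the gluing over $X$ that you flag as the main difficulty is unnecessary: the modulus condition is local on $\sL$, and over a trivializing open the base is a spectator, so the paper reduces at once to the multiplication map $\A^1_k\times\A^1_k\to\A^1_k$ and blows up $\P^1_k\times\P^1_k$ at the two points $(0,\infty)$ and $(\infty,0)$.
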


\begin{proof}
Since the modulus condition is local on $\sL$, it is enough to show that $V' \cap (f^{-1}(U) \times {\square}^{n+1}_k)$ has modulus $U$ for every affine open subset $U \subset X$ over which $f$ is trivial. So we may assume $X = \Spec(R)$ is affine and $\sL = \Spec(R[X])$ is trivial. In this case, $H: U \times \A^1_k \times \A^1_k \to U \times \A^1_k$ is given by $H(u, x, y) = (u, xy)$. Since $U$ plays no role in this map, we can drop it and assume $U = \Spec(k)$ so that $H: \A^1_k \times \A^1_k \to \A^1_k$ is the multiplication map. This map uniquely extends to a rational map $H: \P^1_k \times \P^1_k \dashrightarrow \P^1_k$, given by $H\left((X_0;X_1), (T_0;T_1)\right) = (X_0T_0; X_1T_1)$, 
which is regular on $W =(\P^1_k \times \P^1_k) \setminus 
\{(0, \infty), (\infty, 0)\}$.

We next observe that since the modulus divisor is $U = \{0\} \inj \A^1_k$, to check the modulus condition for $H^{-1}(V)$ is equivalent to check the modulus 
$(\{0\} \times \A^1_k)$ for $(H|_{W \times {\square}^{n}_k})^{-1}(V_1)$, where $V_1$ is the closure of $V$ in $\P^1_k \times {\square}^{n}_k$. We can thus replace $\A^1_k$ by $\P^1_k$ as the target space of $H$ and $\ov{V}'$ by its closure in
$\P^1_k \times \ov{\square}^{n+1}_k$ in order to check the modulus condition for 
$V'$.

Let $\pi: \Gamma \to \P^1_k \times \P^1_k$ be the blow-up along 
$\Sigma = \{(0,\infty), (\infty, 0)\}$. 
It is easily checked (see the proof of \lemref{lem:Blow-up}) that 
$\Gamma \inj \P^1_k \times \P^1_k \times \P^1_k$ is the closed subscheme given 
by $\Gamma = \{\left((X_0;X_1), (T_0;T_1), (Y_1;Y_0)\right)| 
X_0T_0Y_0 = X_1T_1Y_1\}$. 
Define a map $\ov{H}: \Gamma \to \P^1_k$ by 
$\ov{H}\left((X_0;X_1), (T_0;T_1), (Y_1;Y_0)\right) = (Y_1;Y_0)$.

We claim that $\ov{H}|_W = H$. 
To check this, let $U_1 = \{\left((X_0;X_1), (T_0;T_1)\right)|
X_1 \neq 0 \neq T_0\}$ and $U_2 = \{\left((X_0;X_1), (T_0;T_1)\right)|
X_0 \neq 0 \neq T_1\}$ be two open subsets of $\P^1_k \times \P^1_k$.
In the affine coordinates $(x_0, t_1) \in U_1 \simeq \A^2_k$, 
the restriction of $H$ on $U_1 \cap W$ is given by
$H(x_0, t_1) = (x_0;t_1)$ and the restriction of $\ov{H}$ on 
$\pi^{-1}(U_1) \cap W \cap (x_0 \neq 0)$ is given by 
$\ov{H}\left((x_0,t_1, (1; x^{-1}_0t_1)\right) = (1; x^{-1}_0t_1)
= (x_0; t_1) = H(x_0, t_1)$. The restriction of $\ov{H}$ on 
$\pi^{-1}(U_1) \cap W \cap (t_1 \neq 0)$ is given by 
$\ov{H}\left((x_0,t_1, (x_0t^{-1}_1; 1)\right) = (x_0t^{-1}_1; 1)
= (x_0; t_1) = H(x_0, t_1)$.

The restriction of $H$ on $U_2 \cap W$ is given by
$H(x_1, t_0) = (t_0;x_1)$ and the restriction of $\ov{H}$ on 
$\pi^{-1}(U_2) \cap W \cap (x_1 \neq 0)$ is given by 
$\ov{H}\left((x_1,t_0, (x^{-1}_1t_0;1)\right) = (x^{-1}_1t_0;1) =
(t_0; x_1)= H(x_1, t_0)$. The restriction of $\ov{H}$ on 
$\pi^{-1}(U_1) \cap W \cap (t_0 \neq 0)$ is given by 
$\ov{H}\left((x_1,t_0, (1;x_1t^{-1}_0)\right) = (1; x_1t^{-1}_0)
= (t_0; x_1) = H(x_1, t_0)$. Since $\pi$ is an isomorphism away from
$U_1 \cup U_2$, we have shown that $\ov{H}|_W = H$.  

It follows from the claim that there is a commutative diagram
\begin{equation}\label{eqn:Blow-up-Local}
\xymatrix@C1pc{
\pi^{-1}(W) \ar@{^{(}->}[r]^>>>>{j_1} \ar[d]_{\simeq} & \Gamma 
\ar@{->>}[d]_{\pi} \ar[dr]^{\ov{H}} & \\
W \ar@{^{(}->} [r]^>>>>{j} \ar@/_1.5pc/[rr] & \P^1_k \times \P^1_k
\ar@{.>}[r]^{\ \ H} & \P^1_k.}
\end{equation}

Let $E = \pi^*((0, \infty))$ denote one of the two components of the exceptional
divisor for $\pi$ and 
let $D = U  = \{0\} \inj \P^1_k$. We have 
$\pi^*(D \times \P^1_k) = (D \times \P^1_k)+ E$. Similarly, we have 
$\pi^*(\P^1_k \times \{\infty\}) = (\P^1_k \times \{\infty\}) +E$ in 
${\rm Div}(\Gamma)$. 
Set $E_{n} = E \times \ov{\square}^n_k$. 

Let $Z \inj \Gamma \times \ov{\square}^{n}_k$ denote the strict transform of $\ov{V}'$. Since $\ov{H}_{n}(Z \cap (\pi^{-1}(W) \times {\square}^{n}_k)) = V$ and since $\ov{H}_{n}$ is projective, we must have $\ov{H}_{n}(Z) = \ov{V}$.
We remark at this stage that ensuring the projectivity of $\ov{H}_n$
was the reason for us to replace $\A^1_k \times \A^1_k$ 
by $\P^1_k \times \P^1_k$ and $\A^1_k$ by $\P^1_k$ as the source and the target
of $H$.

We now have a commutative diagram
\begin{equation}\label{eqn:generic-pt-3-Local}
\xymatrix@C1pc{
Z^N \ar[dr]^{\nu_Z} \ar[rr]^{f} \ar[dd]_{g} & & \ov{V}^N \ar[d]^{\nu_V} \\
& \Gamma \times \ov{\square}^{n}_k \ar[r]^{\ov{H}_{n}} \ar[d]^{\pi_{n}}
& \P^1_k \times \ov{\square}^{n}_k \\
{\ov{V}'}^N \ar[r]_<<<{\nu_{V'}} & \P^1_k \times \ov{\square}^{n+1}_k, & }
\end{equation}
where $f$ and $g$ are the unique maps induced by the universal property of normalization for dominant maps. Since $f$ is a surjective map of integral schemes, the modulus condition for $V$ implies that $(\nu_V \circ f)^*(\P^1_k \times F^{\infty}_n) \ge (\nu_V \circ f)^*(D \times \ov{\square}^n_k)$ on $Z^N$. In particular, we get $(\ov{H}_{n} \circ \nu_Z)^*(\P^1_k \times F^{\infty}_n) \ge (\ov{H}_n \circ \nu_Z)^*(D \times \ov{\square}^n_k)$ on $Z^N$. Equivalently, we have
\begin{equation}\label{eqn:generic-pt-4-Local}
\nu^*_Z(\Gamma \times F^{\infty}_n) \ge \nu^*_Z(\ov{H}^*(D) \times \ov{\square}^n_K).
\end{equation}

Since $H^*(D) = (\A^1_k \times \{0\}) + (\{0\} \times \square_k)$, we get ${j}^*_{1,n} \circ \ov{H}^*_{n}(D \times \ov{\square}^n_k) = {j}^*_{1,n}(\A^1_k \times F^{0}_{n,n+1}) + {j}^*_{1,n}(D \times \ov{\square}^{n+1}_k)$, where $j_1: W \inj \Gamma$ is the inclusion. Since $\A^1_k \times F^{0}_{n,n+1}$ and $D \times  \ov{\square}^{n+1}_k$ are irreducible, we get $\ov{H}^*(D) \times \ov{\square}^n_k \ge 
(\P^1_k \times F^{0}_{n, n+1}) + (D \times \ov{\square}^{n+1}_k)$ on $\Gamma \times \ov{\square}^n_k$. Combining this with ~\eqref{eqn:generic-pt-4-Local}, we get
\begin{equation}\label{eqn:generic-pt-4-0-Local}
\nu^*_Z(\Gamma \times F^{\infty}_n) \ge
\nu^*_Z(D \times \ov{\square}^{n+1}_k). 
\end{equation}

This in turn implies that 
\[
\begin{array}{lll}
(\pi_{n} \circ \nu_Z)^*(\P^1_k \times F^{\infty}_{n+1}) & = &
(\pi_{n} \circ \nu_Z)^*(\P^1_k \times F^{\infty}_n \times \ov{\square}_k)  \\
& &
+ (\pi_{n} \circ \nu_Z)^*(\P^1_k \times \ov{\square}^n_k \times \{\infty\}) \\
& = & \nu^*_Z(\Gamma \times F^{\infty}_n)
+ (\pi_{n} \circ \nu_Z)^*(\P^1_k \times \ov{\square}^n_k \times \{\infty\}) \\
& \ge & \nu^*_Z(D \times \ov{\square}^{n+1}_k) 
+ (\pi_{n} \circ \nu_Z)^*(\P^1_k \times \ov{\square}^n_k \times \{\infty\}) \\
& {=} & 
\nu^*_Z(D \times \ov{\square}^{n+1}_k) 
+ \nu^*_Z(E_{n}) + \nu^*_Z(\P^1_k \times \ov{\square}^n_k \times \{\infty\}) \\
& {=} & 
(\pi_{n} \circ \nu_Z)^*(D \times \ov{\square}^{n+1}_k) +
\nu^*_Z(\P^1_k \times \ov{\square}^n_k \times \{\infty\}) \\
& \ge & (\pi_{n} \circ \nu_Z)^*(D \times \ov{\square}^{n+1}_k). \\
\end{array}
\]

Using ~\eqref{eqn:generic-pt-3-Local}, this gives 
$g^*(\nu^*_{V'}(\P^1_k \times F^{\infty}_{n+1})) \ge  
g^*(\nu^*_{V'}(D \times \ov{\square}^{n+1}_k))$. 
We now apply \lemref{lem:cancel} to conclude that 
$\nu^*_{V'}(\P^1_k \times F^{\infty}_{n+1}) \ge 
\nu^*_{V'}(D \times \ov{\square}^{n+1}_k)$ and this is the modulus condition 
for $V'$.
\end{proof}

\begin{lem}\label{lem:H-inv-2}
$V' \inj \sL \times \square^{n+1}_k$ intersects with all faces properly.
\end{lem}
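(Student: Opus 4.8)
The plan is to reduce the face condition for $V'$ to that of $V$, after one structural observation. Write $\tau$ for the extra coordinate of $\square^{n+1}_k$ coming from the $\A^1_k$-factor in the source of $H$. On the open locus $\{\tau\neq 0\}$ the contraction $H$ is, up to the automorphism $(p,\tau)\mapsto(\tau\cdot p,\tau)$ of $\sL\times\G_{m,k}$, nothing but the projection $\sL\times\G_{m,k}\to\sL$; on the closed locus $\{\tau=0\}$ it maps into the $0$-section $\iota(X)$. Since $V$ is an admissible cycle with modulus $\iota(X)$, the modulus condition forces $V\cap(\iota(X)\times\square^n_k)=\emptyset$ — this is exactly the first reduction already made in the proof of \lemref{lem:H-inv-1}, and the analogue of the corresponding step in \propref{prop:moving-mod}. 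Consequently $V'=H_n^{-1}(V)$ is contained in $\{\tau\neq 0\}$, and the above description identifies it with $V\times\G_{m,k}$ (compatibly with the projections to $\sL$ and to $\square^n_k$); in particular $\dim V'=s+n+1$.

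Granting this, let $F\subset\square^{n+1}_k$ be a face. A face fixes each coordinate to $0$ or to $1$ or leaves it free, so I split into three cases according to $\tau$. If $F\subset\{\tau=0\}$, then $V'\cap(\sL\times F)=\emptyset$ since $V'\subset\{\tau\neq 0\}$, and an empty intersection is proper. If $\tau$ is unconstrained on $F$, then $F=F_0\times\square_\tau$ for a face $F_0\subset\square^n_k$, and under $V'\cong V\times\G_{m,k}$ the intersection $V'\cap(\sL\times F)$ corresponds to $(V\cap(\sL\times F_0))\times\G_{m,k}$; since $V$ is admissible, $\dim(V\cap(\sL\times F_0))\le(s+n)-\codim_{\square^n_k}F_0$, and as $\codim_{\square^{n+1}_k}F=\codim_{\square^n_k}F_0$ we get $\dim(V'\cap(\sL\times F))\le(s+n+1)-\codim_{\square^{n+1}_k}F$. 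If $F\subset\{\tau=1\}$, write $F=F_0\times\{1\}$; restricting $H_n$ to $\{\tau=1\}$ is an isomorphism onto $\sL\times\square^n_k$ (because $H(-,1)=\id_\sL$), under which $V'\cap(\sL\times F)$ corresponds to $V\cap(\sL\times F_0)$, of dimension $\le(s+n)-\codim_{\square^n_k}F_0$; here $\codim_{\square^{n+1}_k}F=\codim_{\square^n_k}F_0+1$, and the extra $1$ is exactly absorbed by the extra $1$ in $\dim V'$. These three cases are exhaustive, so $V'$ meets every face properly.

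I expect the only real point is the input invoked at the start — that an admissible cycle with modulus $\iota(X)$ is disjoint from $\iota(X)\times\square^n_k$ — which is what confines $V'$ to $\{\tau\neq 0\}$ and makes the clean identification $V'\cong V\times\G_{m,k}$ available; once that is in hand the argument is a formal dimension count, entirely parallel to (and simpler than) the verification of claim~(1) in the proof of \thmref{thm:moving-PS}. The remaining care is just in keeping the codimension bookkeeping straight in the $\tau=1$ case.
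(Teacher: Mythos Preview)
Your proof is correct and follows essentially the same three-case split on the extra coordinate as the paper: faces with $\tau=0$ (empty intersection by the modulus input), faces with $\tau=1$ (restrict via $H(-,1)=\id$), and faces with $\tau$ free. The only cosmetic difference is that for the last case the paper simply invokes flatness of $H$, whereas you establish $V'\subset\{\tau\neq 0\}$ upfront and use the explicit identification $V'\cong V\times\G_{m,k}$; both arguments amount to the same thing. One small imprecision: the isomorphism $\alpha\colon(p,\tau)\mapsto(\tau\cdot p,\tau)$ is not literally ``compatible with the projection to $\sL$'', but what your argument actually uses --- that $\alpha$ preserves each subset $\sL\times F_0\times\G_{m,k}$ --- is correct, so the dimension count goes through.
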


\begin{proof}
Since $H$ is flat, $V'$ intersects properly with all faces of $\square^{n+1}_k$ of the form $F \times \square_k$. Since $\iota^*_{n+1, n+1, 1}(V') = V$ which intersects faces of $\square^n_k$ properly, we see that $V'$ intersects $F^1_{n+1, n+1}$ properly. Since $V \cap (X \times \square^n_k) = \emptyset$, we must have $\iota^*_{n+1, n+1, 0}(V') = 0$. We have thus shown that $V'$ satisfies the face condition. 
\end{proof}

\begin{thm}\label{thm:H-inv-main}
Let $X$ be a quasi-projective scheme over a field $k$ and let
$f: \sL \to X$ be a line bundle. Let $\iota:X \inj \sL$ denote the
0-section embedding. 
Then, the cycle complex $z_s(\sL|X, \bullet)$ is acyclic for all $s \in \Z$.
\end{thm}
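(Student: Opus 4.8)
The plan is to construct an explicit chain homotopy from the fiberwise contraction $H:\sL\times\A^1_k\to\sL$, exactly as in the moving arguments of \S\ref{sec:PSM}, and to check that the resulting operator $H^*$ together with its specialization to the endpoint $\{1\}\inj\square_k$ provides a contracting homotopy on $z_s(\sL|X,\bullet)$. Concretely, for an admissible cycle $V\in z_s(\sL|X,n)$ one forms $V'=(H^*(V))_{\rm red}$, the reduced flat pull-back $H_n^{-1}(V)$, and by Lemmas \ref{lem:H-inv-1} and \ref{lem:H-inv-2} this lies in $z_s(\sL|X,n+1)$. The two faces at $y_{n+1}=0,1$ recover $\iota^*_{n+1,n+1,0}(V')=0$ (since $V$ is disjoint from the $0$-section $X\times\square^n_k$ by the modulus condition, the pull-back under $x\mapsto 0\cdot x$ is empty) and $\iota^*_{n+1,n+1,1}(V')=V$. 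Because $H$ is flat, pull-back commutes with the other face maps, so $\partial H^* + H^*\partial = \mathrm{id} - 0 = \mathrm{id}$ on the normalized complex, which gives acyclicity immediately.

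The first step I would carry out is to verify carefully that $H^*$ is well-defined as a map of complexes $z_s(\sL|X,\bullet)\to z_s(\sL|X,\bullet+1)$: this is the content of Lemmas \ref{lem:H-inv-1} (modulus condition for $V'$, via the blow-up of $\P^1_k\times\P^1_k$ along $\{(0,\infty),(\infty,0)\}$ and the cancellation Lemma \ref{lem:cancel}) and \ref{lem:H-inv-2} (face condition). One must also confirm that $H^*$ descends to the nondegenerate quotient complexes, i.e., sends degenerate cycles to degenerate cycles; since $H_n = H\times\mathrm{Id}_{\ov\square^n_k}$ is built as a product, a cycle pulled back from $\sL\times\square^{n-1}_k$ goes to one pulled back from $\sL\times\square^n_k$, and the check is routine. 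The second step is the homotopy identity. Writing the cubical boundary as $\partial=\sum_{i=1}^{n+1}(-1)^i(\partial_i^0-\partial_i^1)$ and using that $H_n^{-1}$ commutes with $\iota^*_{n+1,i,\epsilon}$ for $i\le n$ (flatness of $H$), one isolates the $i=n+1$ term and obtains $\partial\circ H^* + H^*\circ\partial = \pm(\partial_{n+1}^0 - \partial_{n+1}^1)(H^*) = \pm(0 - \mathrm{id})$, after fixing signs; the endpoint specialization $H^*_1 := \iota^*_{n+1,n+1,1}\circ H^*$ is literally the identity, so there is no residual term to kill, unlike in \thmref{thm:moving-PS} where one needed a separate argument.

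The main obstacle — and the only genuine content — is Lemma \ref{lem:H-inv-1}, the modulus condition for $V'$, since the fiberwise multiplication $H(x,y)=xy$ does not extend to a morphism on $\P^1_k\times\P^1_k$; it is only a rational map regular on $W=(\P^1_k\times\P^1_k)\setminus\{(0,\infty),(\infty,0)\}$. The resolution is the one already used for $\ov\phi_\eta$ in \lemref{lem:Blow-up}: blow up the two bad points, observe that $\ov H$ becomes regular on the whole of $\Gamma$, track the exceptional divisor $E=\pi^*((0,\infty))$ in $\mathrm{Div}(\Gamma)$, pass to the strict transform $Z$ of $\ov V'$ and its normalization $Z^N$, and chase the divisorial inequalities through the diagram \eqref{eqn:generic-pt-3-Local} using $H^*(D)=(\A^1_k\times\{0\})+(\{0\}\times\square_k)$ and $\pi^*(D\times\P^1_k)=(D\times\P^1_k)+E$. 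The key numerical input is that the extra $E_n$-contribution appearing when comparing $\nu^*_Z(\Gamma\times F^\infty_{n+1})$ with $(\pi_n\circ\nu_Z)^*(D\times\ov\square^{n+1}_k)$ has the right sign to preserve the inequality, after which \lemref{lem:cancel} applied to the surjection $g$ yields the modulus condition for $V'$ on $\sL\times\square^{n+1}_k$. Granting Lemmas \ref{lem:H-inv-1} and \ref{lem:H-inv-2}, the theorem follows formally: every cycle in $z_s(\sL|X,\bullet)$ is null-homotopic, hence the complex is acyclic for all $s\in\Z$.
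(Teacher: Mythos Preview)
Your proposal is correct and follows essentially the same approach as the paper: the fiberwise contraction $H$ gives a chain homotopy $H^*$ between $H^*_1=\mathrm{id}$ and $H^*_0=0$, with Lemmas~\ref{lem:H-inv-1} and~\ref{lem:H-inv-2} supplying the modulus and face conditions for $H^*(V)$. Your sketch of the blow-up argument underlying Lemma~\ref{lem:H-inv-1} and the remark on degenerate cycles are accurate elaborations of what the paper leaves implicit.
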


\begin{proof}
It follows from Lemmas~\ref{lem:H-inv-1} and ~\ref{lem:H-inv-2} that $H: \sL \times \A^1_k \to \sL$ defines a chain homotopy $H^*: z_s(\sL|X, \bullet) \to 
z_s(\sL|X, \bullet)[-1]$ between $H^*_{0} = (H|_{\sL \times 0})^*$ and 
$H^*_1 = (H|_{\sL \times 1})^*$. It is clear that 
$H^*_1 = {\rm Id}_{z_s(\sL|X, \bullet)}$ and the modulus condition implies that 
$H^*_{0} = 0$. It follows that $z_s(\sL|X, \bullet)$ is acyclic.
\end{proof}

\noindent\emph{Acknowledgments.} JP was partially supported by the National Research Foundation of Korea (NRF) grant No. 2015R1A2A2A01004120 funded by the Korean government (MSIP). AK was partially supported by the Swarna Jayanti Fellowship, 2011. The authors are deeply indebted to the referee, who
so thoroughly read the paper and suggested many valuable corrections and
simplifications.

\end{document}